\newtheorem{theorem}{Theorem}
\newtheorem{lemma}{Lemma}
\newtheorem{corollary}{Corollary}
\newenvironment{proof} {{\bf Proof.}}{\hfill \fbox{}\\ \smallskip}
\newtheorem{remark}{Remark}}
\newcommand{\C}{\mathbb{C}}
\newcommand{\n}{\nu}
\newcommand{\h}{\eta}
\newcommand{\D}{\Delta}
\newcommand{\del}{\delta}
\newcommand{\de}{\partial}
\newcommand{\ga}{\gamma}
\newcommand{\ep}{\varepsilon}
\newcommand{\vf}{\varphi}
\newcommand{\p}{\psi}
\newcommand{\si}{\sigma}
\newcommand{\al}{\alpha}
\newcommand{\la}{\lambda}
\newcommand{\LA}{\Lambda}
\newcommand{\ze}{\zeta}
\newcommand{\Th}{\Theta}
\newcommand{\Om}{\Omega}
\newcommand{\om}{\omega}
\newcommand{\ro}{\varrho}
\newcommand{\dive}{\mathop{\rm div}\nolimits}
\newcommand{\Dive}{\mathop{\rm Div}\nolimits}
\newcommand{\vertiii}[1]{{\left\vert\kern-0.25ex\left\vert\kern-0.25ex\left\vert #1 
    \right\vert\kern-0.25ex\right\vert\kern-0.25ex\right\vert}}
\newcommand{\lan}{\langle}
\newcommand{\ran}{\rangle}
\newcommand{\essinf}{\mathop{\rm ess\, inf}}
\newcommand{\esssup}{\mathop{\rm ess\, sup}}
\renewcommand\leq{\leqslant}
\renewcommand\geq{\geqslant}
\newcommand\dist{\text{dist}}
\newcommand\spt{\text{spt}\,}
\newcommand{\R}{\mathbb R}
\newcommand\Cspt{\mathaccent"017{C}}
\newcommand{\Hspt}{\mathaccent"017{H}}
\renewcommand\Re{\mathop{\mathbb R \rm{e}}\nolimits}
\newcommand\elle{\mathop{\mathscr L}\nolimits}
\newcommand\A{\mathop{\mathscr A}\nolimits}
\title{Criterion for the functional dissipativity of the Lam\'e operator}
\author{A. Cialdea
\thanks{Department of Mathematics, Computer Sciences and Economics,
University of Ba\-si\-li\-ca\-ta, V.le dell'Ateneo Lucano, 10, 85100 Potenza, Italy.
 \textit{email:}
cialdea@email.it.}\and
V. Maz'ya 
\thanks{Department of Mathematics, Link\"oping University,
SE-581 83, Link\"oping, Sweden.
Peoples' Friendship University
 of Russia (RUDN University);
6 Miklukho-Maklaya St, Moscow, 117198, Russian Federation.
\textit{email}: vladimir.mazya@liu.se.}
}
\date{}    
\begin{document}

\maketitle

\begin{flushright}
     \textit{Dedicated to Natasha and Sasha Movchan on the occasion of their jubilee}
 \end{flushright}
 
 \bigskip
 \bigskip

\textbf{Abstract.} After introducing the concept 
of  functional dissipativity of the Dirichlet problem in a domain $\Omega\subset \R^N$ for  systems of partial differential operators of the form $\de_{h} (\A^{hk}(x)\de_{k})$ ($\A^{hk}(x)$ being $m \times m$ matrices with complex 
valued $L^\infty$ entries), we find necessary and sufficient conditions for the functional dissipativity   
 of the two-dimensional Lam\'e system.  As an application of our theory we provide  two regularity results for the displacement vector in the $N$-dimensional equilibrium problem, when the body  is fixed along its boundary.

 \section{Introduction}
 
The fundamentals of mathematical elasticity, as discussed in the classical monographs \cite{lurie,timoshenko}, have a wide range of important applications in modelling of multi-scale solids, including inhomogeneous structures, which may consist of different elastic constituents. Formally, such problems are described by a system of partial differential equations with variable coefficients. In many applications, it is preferred to use formulations where interface transmission conditions are formulated on boundaries separating different constituents of an inhomogeneous solid. On the other hand, there are important examples of micro-structured inhomogeneous solids, where the number of constituents is large, and hence  homogenisation approximations are commonly used. If there are no periodicity assumptions on the micro-structure, and the micro-structured solids includes many inclusions of different scales, an effective and rigorous method of meso-scale asymptotic approximations has been introduced in \cite{MMN}, which provides an excellent alternative to the direct homogenisation. The homogenisation procedures on their own have also proved to be very popular and, with the variable micro-structure, may lead to the equations with variable coefficients, which describe the dependence of the elastic coefficients on the spatial variables in the constitutive equations. The mathematical analysis of such problems and their numerical treatment bring additional challenges, which required new work. 

The present paper addresses a class of problems of mathematical elasticity, including the multi-dimensional formulations,  where the governing equations incorporate the variable coefficients, and the analysis is presented in the context of functional dissipativity of linear operators.
 The concept of functional dissipativity of a linear operator was recently introduced in \cite{CM2021}.
 If $A$ is the scalar second order partial differential operator
$\nabla(\A \nabla )$,
where $\A$ is a square matrix whose entries are complex valued $L^{\infty}$-functions defined in the domain $\Om\subset\R^{N}$,  we say that $A$
is functional dissipative with respect to a given positive function $\vf:\R^{+}\to \R^{+}$ if 
$$
\Re \int_\Om \lan \A \nabla u, \nabla(\vf(|u|)\, u)\ran\, dx \geq 0
$$
for any $u\in \Hspt^{1}(\Om)$ such that $\vf(|u|)\, u \in \Hspt^{1}(\Om)$. 

As explained in the introduction of \cite{CM2021}, a motivation for the study of this concept comes from the decrease of the  Luxemburg 
norm of solutions of the Cauchy–Dirichlet problem
$$
 \begin{cases}
u'=Au \\ 
u(0)=u_{0} \, . 
\end{cases}
$$

Here the  Luxemburg
norm is taken in the Orlicz space
  of functions $u$ for which
there exists $\al>0$ such that
$$
\int_{\Om}\Phi(\al\, |u|)\, dx < +\infty\, ,
$$
where the Young function $\Phi$ is related to $\vf$ by
$$
\Phi(s)=\int_{0}^{s} \si\, \vf(\si)\, d\si\, .
$$
 
The functional dissipativity is an extension of the concept of $L^p$-dis\-si\-pa\-tiv\-i\-ty, which is obtained
taking $\vf(t)=t^{p-2}$ ($1<p<\infty$).  In a series of papers \cite{CM2005,CM2006,CM2013,CM2018} we have studied the problem of characterizing the
$L^p$-dissipativity of scalar and matrix partial differential operators. In the monograph \cite{CMbook} this theory is considered in the more general frame of semi-bounded operators. For a short survey of our results we refer to 
the introduction of \cite{CM2021}.

 
 The aim of the present paper is 
 to study the functional dissipativity of the  
 two-dimensional Lam\'e operator
  \begin{equation}\label{opelast}
Eu= \nabla\cdot(\la(x)\, \dive u\, I  + \mu(x)\, (\nabla u + (\nabla u)^{T})\, .
\end{equation}
 
 The Lam\'e parameters $\la$ and $\mu$ are supposed to be real valued $L^{\infty}$ functions satisfying the usual 
ellipticity conditions (see \eqref{eq:lameinf} below). 
Previously  we have considered the case of constant Lam\'e parameters
and proved that 
 \begin{equation}   \label{opelastconst}
    Eu=\mu\D u + (\la+\mu)\nabla \dive u\, ,
\end{equation}
 is $L^{p}$-dissipative if and only if
$$
\left(\frac{1}{2}-\frac{1}{p}\right)^{2}\leq 
\frac{2(\n-1)(2\n-1)}{ (3-4\n)^{2}}\, ,
$$
where $\n$ is the Poisson ratio (see \cite[Th. 3, p.244]{CM2006}).  
Note that this condition can be written in terms of Lam\'e constants as
$$
\left(1-\frac{2}{p}\right)^{2}\leq  1- \left(\frac{\la + \mu}{\la+3\mu}\right)^2 .
$$

 In the first part of the present paper we study
 the functional dissipativity of
 a general system of partial differential operators of the form
\begin{equation}\label{eq:op1}
A=\de_{h} (\A^{hk}(x)\de_{k})
\end{equation}
where $\A^{hk}(x)=\{a^{hk}_{ij}(x)\}$ are 
$m\times m$ matrices 
whose elements are complex valued $L^{\infty}$-functions  functions defined 
in a domain $\Om\subset\R^{N}$ 
$(1\leq i,j\leq 
m,\ 1\leq h,k \leq N)$.  The Lam\'e system  is obtained taking
$$
a^{hk}_{ij}(x)= \la(x) \del_{ih}  \del_{jk} + \mu(x)  ( \del_{ij}  \del_{hk}   + \del_{ik}  \del_{hj})\, .
 $$

Concerning the general systems \eqref{eq:op1}, the operator $A$ is functional dissipative (or $L^\Phi$-dissipative) if
$$
\Re \int_{\Om} \lan \A^{hk} \de_{k} u, \de_{h}(\vf(|u|)\, u)\ran\, dx \geq 0
$$
for any $u\in [\Hspt^{1}(\Om)]^m$  such that $\vf(|u|)\, u\in 
[\Hspt^{1}(\Om)]^m$. We say also that the operator $A$ is strict functional dissipative if there exists $\kappa>0$ such that
$$
\Re \int_{\Om} \lan \A^{hk} \de_{k} u, \de_{h}(\vf(|u|)\, u)\ran\, dx \geq 
\kappa \int_{\Om} | \nabla(\sqrt{\vf(|u|)}\, u)|^2 dx
$$
 for any $u\in [\Hspt^{1}(\Om)]^m$  with $\vf(|u|)\, u\in 
[\Hspt^{1}(\Om)]^m$. 

The last concept is strictly related to the concept of $p$-elliptic operator. This 
was considered in a series of papers by  Carbonaro and Dragi\v{c}evi\'{c} \cite{CD20201,CD20202},  Dindo\v{s} and Pipher \cite{DP20191,DP20192,DP20201,DP20202},
Egert \cite{egert}.
It is worthwhile to remark that, if the partial differential operator has no lower order terms, the concepts of $p$-ellipticity
and strict  $L^p$-dissipativity coincide. 
 Our results show that the operator $A$ is strict  $L^p$-dissipative, i.e. $p$-elliptic,  if and only if there exists $\kappa>0$ such that
 $A-\kappa\Delta$ is $L^p$-dissipative (see Corollary \ref{co:co1} below).

Concerning  Lam\'e system with constant Lam\'e parameters,
in \cite[Corollary 1, p.246]{CM2006}) 
we proved also  that there exists $\kappa>0$ such that $E-\kappa\Delta$ is $L^p$-dissipative if and only if
$$
\left(\frac{1}{2}-\frac{1}{p}\right)^{2}<
\frac{2(\n-1)(2\n-1)}{ (3-4\n)^{2}}\, ,
$$
i.e.
$$
\left(1-\frac{2}{p}\right)^{2}<  1- \left(\frac{\la + \mu}{\la+3\mu}\right)^2 .
$$

As remarked before, this is equivalent to say that $E$ is strict  $L^p$-dis\-si\-pa\-tive, i.e. $E$ is $p$-elliptic.  The last result was
recently extended to variable Lam\'e parameters by Dindo\v{s}, Li and Pipher \cite{DLP}.
It must be pointed out that  these Authors introduce an auxiliary function  $r(x)$  (see \cite[formula (85)]{DLP}) which
generates some first order terms in the partial differential operator. In the definition of $p$-ellipticity these terms do not play any role,
while they have some role in the  dissipativity. Therefore our and their results do not seem to be completely
equivalent.

The main result of the present paper is that, assuming that the BMO seminorm
of the function  $\mu^2\, (\la + 3\mu)^{-1}$ is sufficiently small, elasticity operator \eqref{opelast} is strict functional dissipative if and only if 
$$
\LA^{2}_{\infty} <    1 - \esssup_{x\in\Om}\left(\frac{\la + \mu}{\la+3\mu}\right)^2,
$$
where $\LA^{2}_{\infty}=\sup_{t>0} \LA^{2}(t)$
and $\LA$ is the function defined by the relation
$$
\LA\left(s\sqrt{\vf(s)}\right)= - \frac{s\, \vf'(s)}{s\,\vf'(s)+2\, 
\vf(s)}\,  .
$$
For the theory of BMO functions we refer to Stein \cite[Chapter IV]{stein}.

This paper is organized as follows. In Section \ref{sec:prelim} we specify the class of functions $\vf$ we are going to consider, 
 introduce some related functions and recall some results obtained in \cite{CM2021}.
 
 Section \ref{sec:CNES} is devoted to prove necessary and sufficient conditions for the functional dissipativity
 of the general system of the second order in divergence form \eqref{eq:op1}. Specifically we prove the equivalence between the  functional dissipativity 
 (strict functional dissipativity)  of such an operator 
 and the positiveness (strict  positiveness) of the real part of a certain  form in $[\Hspt^{1}(\Om)]^{m}$.
 
 In Section \ref{sec:CN} we give  algebraic necessary conditions for the functional dissipativity and the strict functional dissipativity
  of  a general system when $N=2$.  
 We remark that we prove these results under the additional assumption that the function $|s\, \vf'(s)/ \vf(s)|$ is not decreasing.
 
 The main result concerning  the  strict functional dissipativity of two-di\-men\-sion\-al elasticity operator is proved
 in Section \ref{sec:elast}.

As an application of our theory, in the last Section \ref{sec:applic} we provide two regularity results
for the energy solution of the Dirichlet problem for Lam\'e system with zero data on the boundary. This represents the equilibrium problem
in linear elasticity for a body which is fixed along its boundary. We mention that 
Dindo\v{s}, Li and Pipher \cite{DLP} obtained some regularity results for solutions of the
Lam\'e system which are
of a different nature.

 \section{Preliminaries}\label{sec:prelim}
 
 In this Section we recall some definitions and results obtained in \cite{CM2021}.
 
 Let $\Om$ be an open set in $\R^{N}$.
As usual, by $\Cspt^\infty(\Om)$ we denote the space of  complex valued $C^{\infty}$ functions
having compact support in $\Om$
and by
 $\Hspt^1(\Om)$ the closure of $\Cspt^\infty(\Om)$ in the norm
 $$
 \int_{\Om}(|u|^2  + |\nabla u|^2)dx, 
 $$
$\nabla u$ being the gradient of the function $u$. For the basic facts on functional spaces
used in the following one can consult, e.g.,  Brezis \cite{brezis}.

The inner product either in
$\C^{N}$ or in $\C$ is denoted by $\lan \cdot, \cdot \ran$  and the bar denotes complex conjugation.
 
From now on we assume that $\vf$ is a positive function satisfying 
 the following conditions
\renewcommand{\labelenumi}{(\roman{enumi})}
\renewcommand{\theenumi}{(\roman{enumi})}
\begin{enumerate}
	\item\label{item1} $\vf \in C^{1}((0,+\infty))$;
	\item\label{item2} $(s\, \vf(s))'>0$ for any $s>0$;
	\item the range of the strictly increasing function $s\, \vf(s)$ is  $(0,+\infty)$;	
	\item\label{item4} there exist two positive constants $C_{1}, C_{2}$  and a real number $r>-1$ such that
$$
C_{1} s^{r}\leq (s\vf(s))' \leq C_{2}\, s^{r}, \qquad s\in (0,s_{0})
$$
for a certain $s_{0}>0$. If $r=0$ we require more restrictive 
conditions: there exists the finite limit $\lim_{s\to 
0^+}\vf(s)=\vf_{+}(0)>0$ 
and  $\lim_{s\to 0^+}s\, \vf'(s)=0$.
\item\label{item5} 
There exists $s_{1}>s_{0}$ such that 
$$
   \vf'(s)\geq 0  \text{ or }  \vf'(s)\leq 0 \qquad \forall\ s\geq s_{1}
   . 
   $$
\end{enumerate}

The condition \ref{item4} prescribes the behaviour of the function $\vf$ in a neighborhood of the origin,
while \ref{item5} concerns the behaviour for large $s$.

Let us denote by $t\, \psi(t)$ the inverse function of $s\, \vf(s)$. 
The functions
$$
\Phi(s)=\int_{0}^{s} \si\, \vf(\si)\, d\si, \qquad
\Psi(s)= \int_{0}^{s} \si\, \psi(\si)\, d\si
$$
are conjugate Young functions. 

\begin{lemma}[\hbox{\cite[Lemma 1]{CM2021}}]\label{lemma:new}
The function $\vf$ satisfies conditions \ref{item1}-\ref{item5}
if and only if the function $\psi$ satisfies
the same conditions with $-r/(r+1)$ instead of $r$.
\end{lemma}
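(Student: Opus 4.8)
Set $F(s)=s\,\vf(s)$ and let $G=F^{-1}$, so that $G(t)=t\,\psi(t)$ by the very definition of $\psi$. The relation between $\vf$ and $\psi$ is symmetric, since $F$ is likewise the inverse of $G$, and the substitution $r\mapsto -r/(r+1)$ is an involution of $(-1,+\infty)$ (applying it twice returns $r$). Hence it suffices to prove the implication: if $\vf$ satisfies (i)--(v) then $\psi$ satisfies (i)--(v) with $-r/(r+1)$ in place of $r$; the converse follows by exchanging the roles of $\vf$ and $\psi$.

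Conditions (i)--(iii) for $\psi$ are soft consequences of the inverse function theorem: by (ii), $F\in C^{1}((0,+\infty))$ with $F'>0$, and by (iii) it maps $(0,+\infty)$ onto $(0,+\infty)$, so $G\in C^{1}((0,+\infty))$, $G'(t)=1/F'(G(t))>0$, and $G$ is onto $(0,+\infty)$; dividing by $t$ transfers these properties to $\psi$ and to $(t\,\psi(t))'=G'(t)$. For (iv) the plan is to integrate the hypothesis. Since $r>-1$, the bound $C_{1}\sigma^{r}\leq F'(\sigma)\leq C_{2}\sigma^{r}$ on $(0,s_{0})$ integrates to $\frac{C_{1}}{r+1}\,s^{r+1}\leq F(s)\leq\frac{C_{2}}{r+1}\,s^{r+1}$ there; inverting this chain of inequalities gives constants $c,c'>0$ with $c\,t^{1/(r+1)}\leq G(t)\leq c'\,t^{1/(r+1)}$ for small $t$, and substituting back into $F'(G(t))$ (again via $C_{1}G(t)^{r}\leq F'(G(t))\leq C_{2}G(t)^{r}$, the two bounds on $G(t)^{r}$ being simply swapped when $r<0$) shows that $(t\,\psi(t))'=1/F'(G(t))$ is bounded above and below by multiples of $t^{-r/(r+1)}$ near $0$. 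This is precisely estimate (iv) for $\psi$ with exponent $r'=-r/(r+1)$, and $r'>-1$ because $r>-1$. In the borderline case $r=0$ one uses limits instead of power bounds: $\psi(t)=G(t)/t$ together with $t/G(t)=\vf(G(t))\to\vf_{+}(0)$ gives $\psi_{+}(0)=1/\vf_{+}(0)>0$, while $F'(s)=\vf(s)+s\,\vf'(s)\to\vf_{+}(0)$ gives $G'(t)\to 1/\vf_{+}(0)$, so $t\,\psi'(t)=G'(t)-\psi(t)\to 0$ as $t\to 0^{+}$.

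Finally, (v) rests on a one-line computation. From $F(s)=s\,\vf(s)$ and $F'(s)=\vf(s)+s\,\vf'(s)$ we get $F(s)-s\,F'(s)=-s^{2}\,\vf'(s)$, hence for $t=F(s)$,
$$
\psi'(t)=\frac{G'(t)-\psi(t)}{t}=\frac{1}{t}\left(\frac{1}{F'(s)}-\frac{s}{F(s)}\right)=\frac{F(s)-s\,F'(s)}{t\,F(s)\,F'(s)}=\frac{-s^{2}\,\vf'(s)}{t\,F(s)\,F'(s)}\, .
$$
As $t$, $F(s)$, $F'(s)$ are all positive for $s>0$, the sign of $\psi'(F(s))$ is opposite to that of $\vf'(s)$. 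Therefore, if $\vf'$ keeps a constant sign on $[s_{1},+\infty)$, then $\psi'$ keeps the opposite constant sign on $[F(s_{1}),+\infty)$; enlarging this threshold beyond the $s_{0}$ attached to $\psi$ yields (v). I expect the only genuine work to be in (iv) — extracting the sharp polynomial behaviour of $G$ at the origin by integration and inversion, and treating the degenerate exponent $r=0$ through limits rather than power estimates. Once the identity $F(s)-s\,F'(s)=-s^{2}\,\vf'(s)$ is noticed, (v) is immediate, and (i)--(iii) are routine.
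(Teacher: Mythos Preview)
The paper does not supply a proof of this lemma: it is quoted verbatim from \cite[Lemma~1]{CM2021} and used as a black box, so there is no argument in the present paper to compare against. Your proposal is a complete and correct proof. The reduction to one direction via the involution $r\mapsto -r/(r+1)$ is the natural symmetry; conditions (i)--(iii) do follow immediately from the inverse function theorem applied to $F(s)=s\,\vf(s)$; the treatment of (iv) by integrating $C_{1}s^{r}\leq F'(s)\leq C_{2}s^{r}$ (using $F(0^{+})=0$, which is forced by (iii)) and then inverting is the standard way to transfer power bounds through an inverse, and your handling of the borderline $r=0$ via limits is correct since $r'=0$ precisely when $r=0$. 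The identity $F(s)-sF'(s)=-s^{2}\vf'(s)$ makes the sign relation $\operatorname{sgn}\psi'(F(s))=-\operatorname{sgn}\vf'(s)$ transparent, and enlarging the threshold to exceed the $t_{0}$ coming from (iv) for $\psi$ is harmless because the sign condition is monotone in the threshold. In short, nothing is missing.
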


We have also
\begin{equation}\label{eq:phipsi}
\sqrt{\psi(|w|)}\, w = \sqrt{\vf(|u|)}\, u\, ,
\end{equation}
where $w=\vf(|u|)\, u$
(see \cite[formula (43)]{CM2021}).

We need to introduce also some other functions.

Let $\ze(t)$  be the inverse of the strictly 
increasing function $s\sqrt{\vf(s)}$., i.e. 
$\ze(t) = \left(s\sqrt{\vf(s)}\right)^{-1}$.
The range of $s\sqrt{\vf(s)}$
is $(0,+\infty)$ and $\ze(t)$ belongs to $C^{1}((0,+\infty))$.
Define
 \begin{equation}\label{eq:defHGA}
\quad \Theta(t) = \ze(t)/t; \quad \LA(t)=t\, \Theta'(t)/\Theta(t)\, .
\end{equation}

One can prove (see \cite[formula (6)]{CM2021}) that 
\begin{equation}\label{eq:G=}
\LA\left(s\sqrt{\vf(s)}\right)= - \frac{s\, \vf'(s)}{s\,\vf'(s)+2\, 
\vf(s)}\,  .
\end{equation}

\begin{lemma}[\hbox{\cite[Lemma 2]{CM2021}}]
Let $\widetilde{\ze}(t)$ the inverse function of $t\, \sqrt{\psi(t)}$ 
and define, as in \eqref{eq:defHGA},
$$
\quad \widetilde{\Theta} (t) = \widetilde{\ze}(t)/t\,; \quad \widetilde{\Lambda}(t)=t\, 
\widetilde{\Theta}'(t)/\widetilde{\Theta}(t)\, .
$$
We have
\begin{equation}\label{eq:GAtilde}
\widetilde{\Theta} (t) = \frac{1}{\Theta(t)}\, , \qquad 
\widetilde{\Lambda}(t)= - \Lambda(t)
\end{equation}
for any $t>0$.
\end{lemma}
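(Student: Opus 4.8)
The plan is to push everything through the change of variables $t=s\sqrt{\vf(s)}$, using repeatedly the scalar form of \eqref{eq:phipsi}: for $s>0$, putting $\tau:=s\,\vf(s)$ and recalling that $t\,\psi(t)$ is by definition the inverse of $s\,\vf(s)$, we get $\tau\,\psi(\tau)=s$, hence $\psi(\tau)=1/\vf(s)$, and therefore
$$
\tau\sqrt{\psi(\tau)}=s\sqrt{\vf(s)}.
$$

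First I would record that $\widetilde{\ze}$ is well defined and belongs to $C^{1}((0,+\infty))$. By Lemma~\ref{lemma:new} the function $\psi$ satisfies \ref{item1}--\ref{item5}, so, exactly as for $\vf$, $\bigl(t\sqrt{\psi(t)}\bigr)'=\bigl(2\psi(t)+t\psi'(t)\bigr)/\bigl(2\sqrt{\psi(t)}\bigr)>0$, the numerator being $(t\psi(t))'+\psi(t)>0$ by \ref{item2}; moreover $t\sqrt{\psi(t)}$ has range $(0,+\infty)$, since by the identity above it coincides, for $t=s\vf(s)$, with $s\sqrt{\vf(s)}$, whose range is $(0,+\infty)$. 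Thus $t\mapsto t\sqrt{\psi(t)}$ is a $C^{1}$-diffeomorphism of $(0,+\infty)$ onto itself, and $\widetilde{\Theta},\widetilde{\Lambda}$ are well defined.

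For the first identity in \eqref{eq:GAtilde}, fix $s>0$, set $\tau=s\,\vf(s)$ and $t=s\sqrt{\vf(s)}=\tau\sqrt{\psi(\tau)}$. Then $\ze(t)=s$ and $\widetilde{\ze}(t)=\tau$, so
$$
\Theta(t)=\frac{\ze(t)}{t}=\frac{s}{s\sqrt{\vf(s)}}=\frac{1}{\sqrt{\vf(s)}},\qquad
\widetilde{\Theta}(t)=\frac{\widetilde{\ze}(t)}{t}=\frac{\tau}{t}=\frac{s\,\vf(s)}{s\sqrt{\vf(s)}}=\sqrt{\vf(s)},
$$
and since $s\sqrt{\vf(s)}$ takes every value in $(0,+\infty)$ this yields $\widetilde{\Theta}(t)=1/\Theta(t)$ for every $t>0$. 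For the second identity I would simply differentiate: as $\Theta\in C^{1}((0,+\infty))$ and $\Theta>0$, the function $\widetilde{\Theta}=1/\Theta$ is $C^{1}$ with $\widetilde{\Theta}'=-\Theta'/\Theta^{2}$, so
$$
\widetilde{\Lambda}(t)=\frac{t\,\widetilde{\Theta}'(t)}{\widetilde{\Theta}(t)}=\frac{t\bigl(-\Theta'(t)/\Theta(t)^{2}\bigr)}{1/\Theta(t)}=-\frac{t\,\Theta'(t)}{\Theta(t)}=-\Lambda(t).
$$

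I do not expect a genuine obstacle here: the argument is essentially bookkeeping with inverse functions. The only points needing a little care are the $C^{1}$-bijectivity of $t\sqrt{\psi(t)}$ onto $(0,+\infty)$ (handled through Lemma~\ref{lemma:new}) and the surjectivity of $s\mapsto s\sqrt{\vf(s)}$, which is what upgrades the pointwise identities to identities valid for all $t>0$.
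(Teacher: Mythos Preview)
Your argument is correct. The paper does not include its own proof of this lemma---it is merely quoted from \cite{CM2021}---so there is nothing to compare against here; your derivation via the change of variables $\tau=s\,\vf(s)$ together with the scalar identity $\tau\sqrt{\psi(\tau)}=s\sqrt{\vf(s)}$, followed by straightforward differentiation of $\widetilde{\Theta}=1/\Theta$, is a clean and self-contained justification of \eqref{eq:GAtilde}.
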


We write also two equalities given in \cite{CM2021}:
\begin{equation}\label{eq:H2psi}
\Theta^{2}(t)\, \vf[\ze(t)] =1\, , \quad \forall\ t>0, 
\end{equation}
and 
\begin{equation}\label{eq:tH'}
\Theta(t)\, \vf'[\ze(t)]\, [t\, \Theta'(t) + \Theta(t)] + \Theta'(t)\, \vf[\ze(t)] =
-\Theta'(t)/\Theta^{2}(t)\, , \quad \forall\ t>0.
\end{equation}

 Finally we note the following Lemma, proved in the scalar case in \cite[Lemma 3]{CM2021}. The extension
 to vector valued functions is immediate. 
 \begin{lemma}\label{lemma:fipsi}
If $u\in [H^{1}(\Om)]^m$ $([\Hspt^{1}(\Om)]^m)$ is such that $\vf(|u|)\, u\in 
[H^{1}(\Om)]^m$ $([\Hspt^{1}(\Om)]^m)$, then $\sqrt{\vf(|u|)}\, u$ belongs to $[H^{1}(\Om)]^m$ 
$([\Hspt^{1}(\Om)]^m)$.
\end{lemma}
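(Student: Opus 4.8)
The plan is to follow the scalar result \cite[Lemma 3]{CM2021}, to which the statement reduces when $m=1$, and to note that the passage to $\C^{m}$-valued functions needs only the replacement of the modulus of a complex number by $|u|=(\sum_{i}|u_{i}|^{2})^{1/2}$, the almost everywhere relations $\de_{h}|u|=|u|^{-1}\Re\lan\de_{h}u,u\ran$ on $\{u\neq0\}$ and $\de_{h}|u|=0$ a.e.\ on $\{u=0\}$, and the orthogonal splitting in $\C^{m}$ of $\de_{h}u$ into its component along $u$ and its component orthogonal to $u$. Write $w=\vf(|u|)\,u$, so that $|w|=|u|\,\vf(|u|)$, $u=\psi(|w|)\,w$, $\vf(|u|)\,\psi(|w|)=1$, and, by \eqref{eq:phipsi}, the function under study is $v:=\sqrt{\vf(|u|)}\,u=\sqrt{\psi(|w|)}\,w$; by Lemma \ref{lemma:new} the pairs $(\vf,u)$ and $(\psi,w)$ play symmetric roles. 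That $v\in[L^{2}(\Om)]^{m}$ is immediate, since $|v|^{2}=\vf(|u|)|u|^{2}=|u|\,|w|$, whence $\int_{\Om}|v|^{2}\leq\|u\|_{L^{2}}\|w\|_{L^{2}}$.

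For the gradient I would first record the identity, valid a.e.\ on $\{u\neq0\}$,
$$
\nabla v=\tfrac12\Big(\sqrt{\vf(|u|)}\,\nabla u+\sqrt{\psi(|w|)}\,\nabla w\Big),
$$
obtained by differentiating the product $\sqrt{\vf(|u|)}\,u$ and eliminating the term containing $\vf'(|u|)$ by means of $\nabla w=\vf(|u|)\,\nabla u+\vf'(|u|)\,(\nabla|u|)\otimes u$. Granted this, it is enough to show $\sqrt{\vf(|u|)}\,\nabla u\in L^{2}(\Om)$; the companion fact $\sqrt{\psi(|w|)}\,\nabla w\in L^{2}(\Om)$ then follows by symmetry, since $\psi$ satisfies \ref{item1}--\ref{item5} and $\psi(|w|)\,w=u\in[H^{1}(\Om)]^{m}$.

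The integrability of $\sqrt{\vf(|u|)}\,\nabla u$ is, I expect, the main obstacle. I would split $\Om$ into $\{|u|<s_{0}\}$, $\{s_{0}\leq|u|\leq s_{1}\}$ and $\{|u|>s_{1}\}$, with $s_{0},s_{1}$ as in \ref{item4}--\ref{item5}. On the middle set $\vf$ is bounded by continuity, so $\int\vf(|u|)|\nabla u|^{2}\leq C\|\nabla u\|_{L^{2}}^{2}$, and the same bound works on the other two sets whenever $\vf(|u|)$ is bounded there; by \ref{item4} this fails near $0$ only when $r<0$, and by \ref{item5} it fails near $\infty$ only when $\vf$ is nondecreasing there. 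On those two regions I would use the orthogonal (radial/tangential with respect to $u$) decomposition of $\nabla u$ and of $\nabla w$ to obtain the pointwise identity
$$
\vf(|u|)\,|\nabla u|^{2}=A(|u|)\,\big|\nabla|w|\big|^{2}+\psi(|w|)\,|\nabla w|^{2},\qquad A(s):=\frac{\vf(s)}{\big((s\vf(s))'\big)^{2}}-\frac{1}{\vf(s)},
$$
and then check that $A(|u|)$ and $\psi(|w|)=1/\vf(|u|)$ stay bounded there: near $0$ with $r<0$ one has $\vf(s)\asymp(s\vf(s))'\asymp s^{r}$ by \ref{item4}, hence $|A(s)|\leq Cs^{-r}$ and $\psi(|w|)\asymp|u|^{-r}$, both tending to $0$; near $\infty$ with $\vf$ nondecreasing one has $(s\vf(s))'\geq\vf(s)$, hence $|A(s)|\leq1/\vf(s)\leq1/\vf(s_{1})$ and $\psi(|w|)\leq1/\vf(s_{1})$. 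Since $\big|\nabla|w|\big|\leq|\nabla w|\in L^{2}(\Om)$, the right-hand side of the identity is in every case dominated by a constant multiple of $|\nabla w|^{2}$, which gives the claim.

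Finally, to justify the differentiation and to obtain the $\Hspt^{1}$ version, I would approximate: for small $\ep>0$ let $\beta_{\ep}\in C^{1}((0,\infty))$ be equal to $\sqrt{\vf(s)}$ on $[\ep,1/\ep]$ and constant on $(0,\ep]$ and on $[1/\ep,\infty)$, so that $z\mapsto\beta_{\ep}(|z|)\,z$ is globally Lipschitz on $\C^{m}$ and vanishes at the origin; then $v_{\ep}:=\beta_{\ep}(|u|)\,u$ lies in $[H^{1}(\Om)]^{m}$, and in $[\Hspt^{1}(\Om)]^{m}$ whenever $u$ does, with $\nabla v_{\ep}$ equal to the formula above where $\ep\leq|u|\leq1/\ep$ and to a constant multiple of $\nabla u$ elsewhere. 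Using the comparison $\vf(\ep)\leq(C_{2}/C_{1})\,\vf(|u|)$ on $\{|u|<\ep\}$ when $r<0$ (and boundedness of $\vf$ otherwise), an analogous control near $\infty$ from \ref{item5}, and the integrability just proved, one checks that $v_{\ep}\to v$ in $[L^{2}(\Om)]^{m}$ and that $\{\nabla v_{\ep}\}_{\ep}$ is bounded in $L^{2}(\Om)$; hence $v_{\ep}\rightharpoonup v$ weakly in $[H^{1}(\Om)]^{m}$, and since $[\Hspt^{1}(\Om)]^{m}$ is closed and convex it is weakly closed, so $v\in[\Hspt^{1}(\Om)]^{m}$ when $u\in[\Hspt^{1}(\Om)]^{m}$. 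Identifying the weak limit of $\{\nabla v_{\ep}\}$ with the a.e.\ expression for $\nabla v$ completes the argument.
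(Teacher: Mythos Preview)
Your proposal is correct and matches the paper's own treatment: the paper does not give a proof at all, but simply states that the result was proved in the scalar case in \cite[Lemma 3]{CM2021} and that ``the extension to vector valued functions is immediate''. Your outline carries out exactly this extension, replacing the complex modulus by the Euclidean norm on $\C^{m}$ and otherwise reproducing the scalar argument, so there is nothing to add.
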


 \section{Necessary and sufficient conditions for the functional dissipativity of general systems}\label{sec:CNES}

 Let $\Om$ be a domain of $\R^{N}$ and
let $A$ be the operator
\begin{equation}
    A=\de_{h} (\A^{hk}(x)\de_{k})
    \label{eq:A}
\end{equation}
where  $\de_{k} =\de / \de x_{k}$ and $\A^{hk}(x)=\{a^{hk}_{ij}(x)\}$ are 
$m\times m$ matrices 
whose elements are complex valued $L^{\infty}$-functions  functions defined 
in $\Om$ 
$(1\leq i,j\leq 
m,\ 1\leq h,k \leq N)$.  Here and in the sequel, we adopt the standard summation convention
on repeated indices.

The operator $A$ is  said to be $L^\Phi$-dissipative or functional dissipative if
\begin{equation}\label{eq:defdiss0}
\Re \int_{\Om} \lan \A^{hk} \de_{k} u, \de_{h}(\vf(|u|)\, u)\ran\, dx \geq 0
\end{equation}
for any $u\in [\Hspt^{1}(\Om)]^m$  such that $\vf(|u|)\, u\in 
[\Hspt^{1}(\Om)]^m$.

We say that the operator $A$ is strict  $L^\Phi$-dissipative if there exists $\kappa>0$ such that
\begin{equation}\label{eq:defdiss0str}
\Re \int_{\Om} \lan \A^{hk} \de_{k} u, \de_{h}(\vf(|u|)\, u)\ran\, dx \geq 
\kappa \int_{\Om}| \nabla(\sqrt{\vf(|u|)}\, u)|^2 dx
\end{equation}
for any $u\in [\Hspt^{1}(\Om)]^m$  with $\vf(|u|)\, u\in 
[\Hspt^{1}(\Om)]^m$. We remark that in view of Lemma \ref{lemma:fipsi} the right hand side
is finite.

We have the following Lemma

\begin{lemma}
If the operator $A$ is strict  $L^\Phi$-dissipative then 
$$
\Re \int_{\Om} \lan \A^{hk} \de_{k} u, \de_{h}(\vf(|u|)\, u)\ran\, dx \geq 
\frac{\kappa}{4} \int_{\Om}\vf(|u|) \, | \nabla u|^2 dx
$$
for any $u\in [\Hspt^{1}(\Om)]^m$  such that $\vf(|u|)\, u\in 
[\Hspt^{1}(\Om)]^m$, 
where $\kappa$ is the constant in \eqref{eq:defdiss0str}.
\end{lemma}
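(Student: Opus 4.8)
The plan is to compare the two integrands pointwise on the set $\{u\neq 0\}$ and show that $|\nabla(\sqrt{\vf(|u|)}\,u)|^2 \geq \tfrac14\,\vf(|u|)\,|\nabla u|^2$ almost everywhere, after which the claimed inequality follows immediately by integrating and invoking the hypothesis \eqref{eq:defdiss0str}. So the core of the argument is an algebraic estimate, not an integration-by-parts or density argument.

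First I would write $w=\sqrt{\vf(|u|)}\,u$ and, on the open set where $u\neq 0$ (off which $\nabla u=0$ a.e.), compute $\nabla w$ by the product and chain rules. Setting $g(s)=\sqrt{\vf(s)}$, one gets
$$
\de_h w = g(|u|)\,\de_h u + g'(|u|)\,(\de_h |u|)\,u,
$$
and since $\de_h|u| = \Re\lan \de_h u, u/|u|\ran$, the vector $(\de_h|u|)\,u$ is the component of $|u|\,\de_h u$ in the (real) direction of $u$. Writing, for fixed $h$, the decomposition $\de_h u = v_\parallel + v_\perp$ relative to the real line spanned by $u$, one finds $\de_h w = \bigl(g(|u|)+|u|\,g'(|u|)\bigr)v_\parallel + g(|u|)\,v_\perp$, so that
$$
|\de_h w|^2 = \bigl(g(|u|)+|u|\,g'(|u|)\bigr)^2 |v_\parallel|^2 + g(|u|)^2 |v_\perp|^2.
$$
Summing over $h$ gives $|\nabla w|^2$ as a sum of such terms. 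Now $\vf(|u|)\,|\nabla u|^2 = g(|u|)^2\bigl(\sum_h |v_\parallel|^2 + \sum_h|v_\perp|^2\bigr)$, so the desired pointwise inequality reduces to checking, at each point $s=|u|>0$,
$$
\bigl(g(s)+s\,g'(s)\bigr)^2 \geq \tfrac14\, g(s)^2 \quad\text{and}\quad g(s)^2 \geq \tfrac14\, g(s)^2,
$$
the second being trivial. Since $g(s)+s\,g'(s) = (s\,g(s))'/1 = \tfrac{d}{ds}\bigl(s\sqrt{\vf(s)}\bigr)$ and $s\sqrt{\vf(s)}$ is strictly increasing with derivative — via $(s\vf(s))'>0$ from condition \ref{item2} and a short manipulation — one checks $g(s)+s\,g'(s) = \tfrac{(s\vf(s))' + s\vf(s)\cdot 0}{2\sqrt{\vf(s)}}$-type expression that is positive; more precisely, using \eqref{eq:G=}, $1+s g'(s)/g(s) = 1 - \tfrac12\cdot\tfrac{s\vf'(s)}{\vf(s)} $ divided appropriately, and one shows this ratio lies in an interval forcing $(g+sg')^2\geq \tfrac14 g^2$, i.e. $|g(s)+s g'(s)|\geq \tfrac12 g(s)$.

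The step I expect to be the main obstacle is precisely this last scalar inequality $|g(s)+s g'(s)| \geq \tfrac12\,g(s)$, equivalently $\bigl|1+\tfrac{s g'(s)}{g(s)}\bigr|\geq \tfrac12$. Writing $g=\sqrt{\vf}$ we have $\tfrac{sg'(s)}{g(s)} = \tfrac12\cdot\tfrac{s\vf'(s)}{\vf(s)}$, so the claim is $\bigl|1 + \tfrac12\cdot\tfrac{s\vf'(s)}{\vf(s)}\bigr| \geq \tfrac12$, i.e. $\tfrac{s\vf'(s)}{\vf(s)} \geq -1$ or $\tfrac{s\vf'(s)}{\vf(s)}\leq -3$. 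The first alternative $s\vf'(s) \geq -\vf(s)$ is exactly the statement $(s\vf(s))' = \vf(s)+s\vf'(s) \geq 0$, which holds (indeed strictly) by condition \ref{item2}; hence $1+\tfrac12\tfrac{s\vf'(s)}{\vf(s)} > \tfrac12 > 0$ and in fact $(g(s)+sg'(s))^2 > \tfrac14 g(s)^2$ pointwise. Thus the second alternative never needs to be invoked, and the factor $\tfrac14$ in the statement is not even sharp — but $\tfrac14$ is all that is claimed. Assembling: integrating $|\nabla w|^2 \geq \tfrac14\,\vf(|u|)|\nabla u|^2$ over $\Om$ and combining with \eqref{eq:defdiss0str} yields the assertion, with the same constant $\kappa$. \hfill \fbox{}\\ \smallskip
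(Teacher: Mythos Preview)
Your proof is correct and follows essentially the same route as the paper: both arguments expand $|\nabla(\sqrt{\vf(|u|)}\,u)|^2$ on the set $\{u\neq 0\}$ and reduce the desired pointwise inequality to the scalar bound $\bigl(1+\tfrac12\,\tfrac{s\vf'(s)}{\vf(s)}\bigr)^2\geq \tfrac14$, which is exactly condition~\ref{item2} rewritten. Your orthogonal decomposition $\de_h u = v_\parallel + v_\perp$ is just a geometric repackaging of the paper's identity \eqref{eq:nabla}; the middle paragraph of your write-up is garbled, but the clean derivation in your final paragraph is the same as the paper's.
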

\begin{proof}
A direct computation shows that
\begin{equation}\label{eq:dircomp}
|\nabla(\sqrt{\vf(|u|)}\, u)|^{2} = 
\left( \frac{(\vf'(|u|)^{2} |u|^{2}}{4\, \vf(|u|)} + \vf'(|u|)\, |u|
\right) |\nabla |u||^{2} + \vf(|u|)\,  | \nabla u|^2
\end{equation}
on the set where $u\neq 0$. We can write
\begin{equation}\label{eq:nabla}
\begin{gathered}
|\nabla(\sqrt{\vf(|u|)}\, u)|^{2}  \\
= \left( \frac{(\vf'(|u|)^{2} |u|^{2}}{4\, \vf(|u|)}
 +
 \vf'(|u|)\, |u| + \vf(|u|)
\right) |\nabla |u||^{2}  \\
+
\vf(|u|) \, (  | \nabla u|^2- |\nabla |u||^{2} ).
\end{gathered}
\end{equation}

 On the other hand condition \ref{item2} implies 
$$
\frac{t\vf'(t) + 2\vf(t)}{\vf(t)} = \frac{t\vf'(t) + \vf(t)}{\vf(t)} + 1 \geq 1
$$
for any $t>0$ and then
$$
\left(\frac{t\vf'(t)}{2\vf(t)}+ 1\right)^{2}\geq \frac{1}{4}\, .
$$

From \eqref{eq:nabla} it follows that
$$
|\nabla(\sqrt{\vf(|u|)}\, u)|^{2} \geq \frac{\vf(|u|)}{4} \, |\nabla|u||^{2} +  \vf(|u|) \, (  | \nabla u|^2- |\nabla |u||^{2} )
$$
and this gives
\begin{equation}\label{eq:lastineq}
|\nabla(\sqrt{\vf(|u|)}\, u)|^{2} \geq \frac{\vf(|u|)}{4} \, | \nabla u|^2 ,
\end{equation}
which proves the Lemma.
\end{proof}

In the particular case $\vf(t)=t^{p-2}$,  Dindo\v{s} and Pipher \cite{DLP} (see also \cite{DP20192} for the scalar case)  proved that
$|\nabla (|u|^{(p-2)/2}u)|^{2}$ and  $|u|^{p-2}|\nabla u|^{2}$ are equivalent. It is  natural to ask if 
this is still true for a general $\vf$.

The answer is negative. While  \eqref{eq:lastineq} is always valid, the opposite inequality
\begin{equation}\label{eq:invdis}
|\nabla(\sqrt{\vf(|u|)}\, u)|^{2}  \leq C\,  \vf(|u|) \,  | \nabla u|^2 
\end{equation}
could fail. An example is given by $\vf(t)=\exp(t^{2})$.  It satisfies condition \ref{item1}-\ref{item5}, but
\eqref{eq:invdis} cannot hold. Indeed,
  in view of \eqref{eq:dircomp},  this  inequality for such a function $\vf$ can be written as
$$
(|u|^{4}+2\, |u|) |\nabla |u| |^{2} + | \nabla u|^2   \leq C\,  | \nabla u|^2 ,
$$
which  is impossible to hold for any $u\in [\Cspt^{\infty}(\Om)]^{m}$. A sufficient condition 
is given in the next Lemma.

\begin{lemma}\label{lem:lemmino}
If the function $t\vf'(t)/\vf(t)$ is bounded on $(0,+\infty)$, then inequality \eqref{eq:invdis} holds.
\end{lemma}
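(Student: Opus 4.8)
The plan is to derive \eqref{eq:invdis} directly from the pointwise identity \eqref{eq:dircomp}, which holds a.e.\ on the set $\{u\neq 0\}$, by dominating the first term on its right-hand side by a multiple of $\vf(|u|)\,|\nabla u|^{2}$.

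First I would use the elementary inequality $|\nabla|u||\leq |\nabla u|$, valid a.e.\ on $\{u\neq 0\}$ for $u\in[H^{1}(\Om)]^{m}$ (write $\de_{h}|u|=\Re\lan u,\de_{h}u\ran/|u|$, apply Cauchy--Schwarz in each component, and sum over $h$). Since $|\nabla|u||^{2}\geq 0$, the coefficient of $|\nabla|u||^{2}$ in \eqref{eq:dircomp} only helps when it is negative, and in all cases
$$
\left(\frac{(\vf'(|u|))^{2}|u|^{2}}{4\,\vf(|u|)}+\vf'(|u|)\,|u|\right)|\nabla|u||^{2}
\leq\left(\frac{(\vf'(|u|))^{2}|u|^{2}}{4\,\vf(|u|)}+|\vf'(|u|)|\,|u|\right)|\nabla u|^{2}.
$$

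Next I would bring in the hypothesis. Setting $M=\sup_{t>0}|t\,\vf'(t)/\vf(t)|<+\infty$, one has $|t\,\vf'(t)|\leq M\,\vf(t)$ for all $t>0$, hence $|\vf'(|u|)|\,|u|\leq M\,\vf(|u|)$ and $(\vf'(|u|))^{2}|u|^{2}/(4\,\vf(|u|))\leq (M^{2}/4)\,\vf(|u|)$, so the coefficient above is at most $(M+M^{2}/4)\,\vf(|u|)$. Substituting back into \eqref{eq:dircomp} gives
$$
|\nabla(\sqrt{\vf(|u|)}\,u)|^{2}\leq\left(1+M+\frac{M^{2}}{4}\right)\vf(|u|)\,|\nabla u|^{2}
$$
a.e.\ on $\{u\neq 0\}$, i.e.\ \eqref{eq:invdis} with $C=1+M+M^{2}/4$. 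Finally, on $\{u=0\}$ one has $\nabla u=0$ a.e.\ (each component $u_{i}\in H^{1}(\Om)$, hence its gradient vanishes where it vanishes) and likewise $\nabla(\sqrt{\vf(|u|)}\,u)=0$ a.e., since $\sqrt{\vf(|u|)}\,u\in[H^{1}(\Om)]^{m}$ by Lemma \ref{lemma:fipsi} and $\{u=0\}=\{\sqrt{\vf(|u|)}\,u=0\}$ because $\vf>0$; thus \eqref{eq:invdis} holds trivially there as well.

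There is no genuine obstacle in this argument; the only points requiring a little care are the sign of the coefficient of $|\nabla|u||^{2}$ in \eqref{eq:dircomp} (handled by passing to absolute values, which is legitimate precisely because it multiplies the nonnegative quantity $|\nabla|u||^{2}$) and the behaviour on the degenerate set $\{u=0\}$.
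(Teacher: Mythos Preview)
Your proof is correct and follows essentially the same route as the paper: both start from the pointwise identity \eqref{eq:dircomp}, bound the coefficient of $|\nabla|u||^{2}$ by $(M^{2}/4+M)\,\vf(|u|)$ using the hypothesis, and then absorb $|\nabla|u||^{2}\leq|\nabla u|^{2}$ to obtain $C=1+M+M^{2}/4$. Your extra care about the sign of the coefficient and the behaviour on $\{u=0\}$ is sound but not strictly needed, since the paper treats these implicitly.
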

\begin{proof}
Assuming $|t\vf'(t)/\vf(t)| \leq K$ ($t>0$), \eqref{eq:dircomp} implies
\begin{gather*}
|\nabla(\sqrt{\vf(|u|)}\, u)|^{2} \leq(K^{2}/4 + K) \, \vf(|u|)\,  |\nabla |u| |^{2}  + \vf(|u|)\,  | \nabla u|^2
\leq \\
(K^{2}/4 + K+1)\, \vf(|u|)\,  | \nabla u|^2\, .
\end{gather*}
\end{proof}

\begin{remark}\label{rem:lemmino}
Lemma \ref{lem:lemmino} and inequality \eqref{eq:lastineq} show that if $t\vf'(t)/\vf(t)$ is bounded, then
$|\nabla(\sqrt{\vf(|u|)}\, u)|^{2}$ and $\vf(|u|) \,  | \nabla u|^2$ are equivalent.
\end{remark}

The next results of this section extend some of the results obtained
 in \cite{CM2006} in the case of $L^p$-dissipativity, i.e. when $\vf(t)=t^{p-2}$.
 If $\A$ is a matrix, by $\A^*$ we denote the adjoint matrix of $\A$, i.e.
 $\A^*=\overline{\A}^t$, $\A^t$ being the transposed matrix of $\A$.

\begin{lemma}\label{le:lemma1}
Let $\Om$ be  a domain in $\R^N$. The operator \eqref{eq:A} is $L^{\Phi}$-dissipative if and only if
\begin{equation}\label{eq:cond1}
\begin{gathered}
\Re \int_{\Om} \Big(\lan \A^{hk} \de_k	 v, \de_h v\ran 
+\LA(|v|)\, |v|^{-2} \lan \left(\A^{hk}- (\A^{kh})^*\right)v, \de_h v \ran \Re \lan v, \de_{k} v\ran  \\
- \LA^{2}(|v|)\, |v|^{-4} \lan \A^{hk} v,v\ran \Re \lan v, \de_{k} v\ran \Re \lan v, \de_{h} v\ran
\Big) dx  \geq 0
\end{gathered}
\end{equation}
for any $v \in [\Hspt^{1}(\Om)]^m$. Here and in the sequel the integrand
is extended by zero on the set where $v$ vanishes.
\end{lemma}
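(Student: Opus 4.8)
The natural strategy is to linearise the nonlinear test function by the substitution $v=\sqrt{\vf(|u|)}\,u$. Since $\ze$ is the inverse of the strictly increasing map $s\mapsto s\sqrt{\vf(s)}$, this gives $|u|=\ze(|v|)$, hence $u=\Theta(|v|)\,v$ with $\Theta(t)=\ze(t)/t$; moreover, setting $w=\vf(|u|)\,u$ and using \eqref{eq:H2psi} in the form $\vf[\ze(|v|)]=1/\Theta^{2}(|v|)$, one gets $w=\vf(\ze(|v|))\,\Theta(|v|)\,v=v/\Theta(|v|)$. The first step is to verify that $u\mapsto v$ is a bijection of $\{u\in[\Hspt^{1}(\Om)]^{m}:\vf(|u|)\,u\in[\Hspt^{1}(\Om)]^{m}\}$ onto $[\Hspt^{1}(\Om)]^{m}$. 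The forward implication is exactly Lemma \ref{lemma:fipsi}. For the converse, given $v\in[\Hspt^{1}(\Om)]^{m}$ one sets $u=\Theta(|v|)\,v$ and $w=v/\Theta(|v|)$ and must show that both lie in $[\Hspt^{1}(\Om)]^{m}$; observe that $w$ relates to $v$ through $\psi$ in the same way that $u$ relates to $v$ through $\vf$ (by \eqref{eq:phipsi}), and $\psi$ satisfies \ref{item1}--\ref{item5} by Lemma \ref{lemma:new}, so the two assertions are of the same nature. They rest on the growth conditions \ref{item1}--\ref{item5}, which control $\Theta$ and $\Theta'$ near $0$ and near $+\infty$, together with a chain rule for Sobolev functions, exactly as in \cite{CM2021} (and, for $\vf(t)=t^{p-2}$, in \cite{CM2006}).

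Next I would compute the two gradients in terms of $v$ on the set $\{v\neq0\}$; off that set one has $\nabla u=\nabla v=0$ almost everywhere, so both integrands vanish there, consistently with the convention of extending by zero. Using $\de_{k}|v|=|v|^{-1}\Re\lan v,\de_{k}v\ran$ together with $\Theta'(t)/\Theta(t)=\LA(t)/t$, which is just the definition \eqref{eq:defHGA} of $\LA$, differentiation of $u=\Theta(|v|)\,v$ and of $w=v/\Theta(|v|)$ gives
\[
\de_{k}u=\Theta(|v|)\Big(\de_{k}v+\LA(|v|)\,|v|^{-2}\,v\,\Re\lan v,\de_{k}v\ran\Big)
\]
and
\[
\de_{h}w=\frac{1}{\Theta(|v|)}\Big(\de_{h}v-\LA(|v|)\,|v|^{-2}\,v\,\Re\lan v,\de_{h}v\ran\Big).
\]

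Then I would substitute these into $\lan\A^{hk}\de_{k}u,\de_{h}(\vf(|u|)\,u)\ran=\lan\A^{hk}\de_{k}u,\de_{h}w\ran$. The factors $\Theta(|v|)$ and $1/\Theta(|v|)$ cancel, and expanding the product yields four terms: the term $\lan\A^{hk}\de_{k}v,\de_{h}v\ran$, the term $-\LA^{2}(|v|)\,|v|^{-4}\lan\A^{hk}v,v\ran\,\Re\lan v,\de_{k}v\ran\,\Re\lan v,\de_{h}v\ran$, and two mixed terms. For the latter, summing over $h$ and $k$, relabelling $h\leftrightarrow k$ in one of them, and using $\lan\A^{kh}\de_{h}v,v\ran=\overline{\lan(\A^{kh})^{*}v,\de_{h}v\ran}$, one checks that their combined real part equals $\LA(|v|)\,|v|^{-2}\,\Re\lan(\A^{hk}-(\A^{kh})^{*})v,\de_{h}v\ran\,\Re\lan v,\de_{k}v\ran$. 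Taking real parts throughout, $\Re\lan\A^{hk}\de_{k}u,\de_{h}(\vf(|u|)\,u)\ran$ therefore coincides pointwise on $\{v\neq0\}$ with the integrand of \eqref{eq:cond1}; integrating over $\Om$ and appealing to the bijection of the first step gives the asserted equivalence.

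The algebraic expansion and the manipulation of the mixed terms are routine. \textbf{The main obstacle} is the functional-analytic content of the first step: proving that $v=\sqrt{\vf(|u|)}\,u$ is genuinely a bijection between the two $\Hspt^{1}$-classes, and that the chain-rule identities for $\de_{k}u$ and $\de_{h}w$ hold in the Sobolev sense even though $\Theta$ may fail to be Lipschitz near $0$ or near $+\infty$. This is precisely where the structural hypotheses \ref{item1}--\ref{item5} on $\vf$ enter, and it parallels the arguments of \cite{CM2021}.
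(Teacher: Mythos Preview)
Your algebraic identity is correct and matches the paper: on $\{v\neq0\}$ the substitution $u=\Theta(|v|)v$, $w=v/\Theta(|v|)$ turns $\Re\lan\A^{hk}\de_{k}u,\de_{h}(\vf(|u|)u)\ran$ into the integrand of \eqref{eq:cond1}, and this is exactly how the paper handles the \emph{sufficiency} direction.

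The genuine gap is in your treatment of \emph{necessity}. You assert that $v\mapsto u=\Theta(|v|)v$ carries $[\Hspt^{1}(\Om)]^{m}$ into $\{u\in[\Hspt^{1}(\Om)]^{m}:\vf(|u|)u\in[\Hspt^{1}(\Om)]^{m}\}$, but this is false in general. Already for $\vf(t)=t^{p-2}$ one has $\Theta(t)=t^{(2-p)/p}$, hence $u=|v|^{(2-p)/p}v$ and $|\nabla u|\sim|v|^{(2-p)/p}|\nabla v|$; with $N=1$, $p=4$, $\Om=(-1,1)$ and $v(x)=x(1-x^{2})\in\Hspt^{1}(\Om)$, near the simple zero at $0$ one gets $|u'|^{2}\sim|x|^{-1}\notin L^{1}$, so $u\notin H^{1}$. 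Thus there is no bijection, and you cannot recover \eqref{eq:cond1} for \emph{every} $v\in[\Hspt^{1}(\Om)]^{m}$ by your route. The appeal to \cite{CM2021} does not help here: Lemma~\ref{lemma:fipsi} only goes in the forward direction (from $u$ to $v$), and nothing in \cite{CM2021} proves the inverse map lands in $\Hspt^{1}$.

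The paper avoids this obstruction altogether. For necessity it starts from $v\in[\Cspt^{1}(\Om)]^{m}$ and sets $u_{\ep}=\Theta(g_{\ep})\,v$ with the regularisation $g_{\ep}=\sqrt{|v|^{2}+\ep^{2}}$, so that $u_{\ep}$ and $\vf(|u_{\ep}|)u_{\ep}$ are manifestly $C^{1}$ with compact support. One then expands $\lan\A^{hk}\de_{k}u_{\ep},\de_{h}(\vf(|u_{\ep}|)u_{\ep})\ran$, passes to the limit $\ep\to0^{+}$ using dominated convergence (the identities \eqref{eq:H2psi}, \eqref{eq:tH'} make the limit collapse to the integrand of \eqref{eq:cond1}), and finally extends from $[\Cspt^{1}(\Om)]^{m}$ to $[\Hspt^{1}(\Om)]^{m}$ by density. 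The regularisation step is not cosmetic: it is what replaces the nonexistent inverse Sobolev map you are trying to invoke.
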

\begin{proof}
\textit{Sufficiency.}
	Suppose $r\geq 0$. Let $u\in [\Hspt^{1}(\Om)]^m$ such that $\vf(|u|)\, u\in [\Hspt^{1}(\Om)]^m$ and define $v=\sqrt{\vf(|u|)}\, u$. In view
	of   Lemma \ref{lemma:fipsi} we have that $v$ belongs to 
	$[\Hspt^{1}(\Om)]^m$.
	
	Since $|u|=\ze(|v|)$ and $|v|^{-1}v = |u|^{-1}u$, we get $u=|v|^{-1}v\, \ze(|v|)= \Th(|v|)\, v$ (see \eqref{eq:defHGA}). 
	Moreover from $\vf(|u|)=|u|^{-2}|v|^{2}=[\Th(|v|)]^{-2}$ we deduce
	 $\vf(|u|)\, \overline{u} = [\Theta(|v|)]^{-1}\overline{v}$.  Therefore
\begin{gather*}
\lan \A^{hk} \de_{k} u, \de_{h}(\vf(|u|)\, u)\ran =
	\lan \A^{hk} \de_{k}(\Theta(|v|)\, v),\de_{h}([\Theta(|v|)]^{-1}v)\ran \\
	= \lan \A^{hk}(\Theta'(|v|) v \de_{k}|v| + \Theta(|v|) \de_{k} v,
	- \Theta'(|v|) [\Theta(|v|)]^{-2} v\de_{h}|v|  + [\Theta(|v|)]^{-1} \de_{h} 
	v \ran  \\
	 = -(\Theta'(|v|)[\Theta(|v|)]^{-1})^{2}\lan \A^{hk}v, v\ran \, \de_{k} |v| \, \de_{h} |v|
	 \\ +
	 \Theta'(|v|)[\Theta(|v|)]^{-1}(\lan \A^{hk}v,\de_{h} v\ran \de_{k}|v| 
	 - \lan \A^{hk}\de_{k} v, v\ran \de_{h}|v| )
	  + 
	 \lan \A^{hk} \de_{k} v, \de_{h} v\ran .
\end{gather*}

From the identities
\begin{equation}\label{eq:identities}
\begin{gathered}
\de_{k}|v|=|v|^{-1}\Re \lan v, \de_{k} v\ran\, ,\\
 \lan \A^{hk}\de_{k} v, v\ran\, \de_{h}|v| = 
  \lan \A^{kh}\de_{h} v, v\ran\, \de_{k}|v| =
  \overline{ \lan (\A^{kh})^{*} v, \de_{h} v\ran}\,  \de_{k}|v| \, ,
\end{gathered}
\end{equation}
it follows
\begin{gather*}
\Re \lan \A^{hk} \de_{k} u, \de_{h}(\vf(|u|)\, u)\ran =\Re \big(
\lan \A^{hk} \de_{k} v, \de_{h} v\ran \\
+ \LA(|v|) |v|^{-2} 
\lan (\A^{hk}-(\A^{kh})^{*})v, \de_{h} v\ran \Re \lan v, \de_{k} v\ran \\
	 -\LA^{2}(|v|)|v|^{-4} \lan \A^{hk}v, v\ran \, \Re \lan v, \de_{h} v\ran
	 \Re \lan v, \de_{k} v\ran \big)
\end{gather*}
on the set $\{x\in \Om\ |\ u(x)\neq 0\}=\{x\in \Om\ |\ v(x)\neq 0\}$.

Inequality \eqref{eq:cond1} implies
$$
\Re \int_{\Om} \lan \A^{hk} \de_{k} u, \de_{h}(\vf(|u|)\, u)\ran\, dx \geq 0
$$
and the sufficiency is proved when $r\geq 0$.

If $-1<r<0$, setting $w=\vf(|u|)\, u$, i.e. $u=\psi(|w|)\, w$, we  can write  condition \eqref{eq:defdiss0} as
$$
\Re \int_{\Om} \lan (\A^{kh})^{*} \de_{k} w, \de_{h}(\psi(|w|)\, w)\ran\, dx \geq 0
$$
for any $w\in [\Hspt^{1}(\Om)]^{m}$ such that $\psi(|w|)\, w\in [\Hspt^{1}(\Om)]^{m}$.

Recalling Lemma \ref{lemma:new}, what we have already proved for $r\geq 0$ shows that this inequality holds 
 if
 \begin{equation}\label{eq:condvstar}
\begin{gathered}
\Re \int_{\Om} \Big(\lan (\A^{kh})^{*} \de_k	 v, \de_h v\ran 
+\widetilde{\LA}(|v|)\, |v|^{-2} \lan \left((\A^{kh})^{*}- \A^{hk}\right)v, \de_h v \ran \Re \lan v, \de_{k} v\ran  \\
-\widetilde{\LA}^{2}(|v|)\, |v|^{-4} \lan (\A^{kh})^{*} v,v\ran \Re \lan v, \de_{k} v\ran \Re \lan v, \de_{h} v\ran
\Big) dx  \geq 0
\end{gathered}
\end{equation}
for any $v \in [\Hspt^{1}(\Om)]^m$.
Since $\widetilde{\Lambda}(|v|)=-\Lambda(|v|)$ (see \eqref{eq:GAtilde}), conditions \eqref{eq:condvstar}
coincides with \eqref{eq:cond1} and the sufficiency is proved also for $-1<r<0$.

\textit{Necessity.}
Let $v\in [\Cspt^{1}(\Om)]^m$ and define $u_{\ep}=\Theta(g_{\ep})\, v$, where
$g_{\ep}=\sqrt{|v|^{2}+\ep^{2}}$.

The function $u_{\ep}$ and  $\vf(|u_{\ep}|)\, u_{\ep}$ belong to $[\Cspt^{1}(\Om)]^m$ and we have
\begin{gather*}
\lan \A^{hk} \de_k u_{\ep},\de_h (\vf(|u_{\ep}|)\, u_{\ep}\ran \\ =
\vf(|u_{\ep}|)\, \lan\A^{hk} \de_k u_{\ep},\de_h u_{\ep}\ran +
\vf'(|u_{\ep}|)\lan \A^{hk} \de_k u_{\ep},  u_{\ep}\,\de_h (|u_{\ep}|) \ran
\\ =
\vf[\Theta(g_{\ep})\, |v|]\,
\lan \A^{hk} (\Theta'(g_{\ep})\, v\, \de_k  g_{\ep} + \Theta(g_{\ep})\de_k  v) ,
\Theta'(g_{\ep})\, v\, \de_h  g_{\ep} + \Theta(g_{\ep})\de_h v \ran \\ +
\vf'[\Theta(g_{\ep})\, |v|] \\ \times
\lan \A^{hk} (\Theta'(g_{\ep})\, v\, \de_k g_{\ep} + \Theta(g_{\ep})\de_k v) ,
\Theta(g_{\ep})\, v\, 
(\Theta'(g_{\ep})\, |v|\, \de_h  g_{\ep} + \Theta(g_{\ep})\de_h |v|) \ran \, .
\end{gather*}

By expanding the terms in the last expression, we get
\begin{equation}
\begin{gathered}\label{eq:formulona}
 \lan \A^{hk} \de_k u_{\ep},\de_h (\vf(|u_{\ep}|)\, u_{\ep}\ran \\ =
\vf[\Theta(g_{\ep})\, |v|]\big\{ [\Theta'(g_{\ep})]^{2}
\lan \A^{hk} v,v\ran \, \de_k g_{\ep}\, \de_h g_{\ep}  \\
+ \Theta'(g_{\ep})\, \Theta(g_{\ep})\, [ \lan \A^{hk} v, \de_h v \ran
\, \de_k g_{\ep}  +
\lan \A^{hk}  \de_k v, v  \ran\, \de_h g_{\ep}] +
 \Theta^{2}(g_{\ep})\, \lan \A^{hk} \de_k v, \de_h v \ran \big\}  \\
+\vf'[\Theta(g_{\ep})\, |v|] \big\{
\Theta(g_{\ep})[\Theta'(g_{\ep})]^{2}|v| \lan \A^{hk} v,v\ran\, \de_k g_{\ep}\, \de_h
g_{\ep}  \\ +
\Theta^{2}(g_{\ep})\, \Theta'(g_{\ep}) [ \lan \A^{hk} v,v\ran\,\de_k g_{\ep}\, \de_h
|v|  + |v|  \lan \A^{hk} \de_k v,v\ran\, \de_h g_{\ep}  ]  \\ +
\Theta^{3}(g_{\ep})  \lan \A^{hk} \de_k v,v\ran\, \de_h |v|  \big\}
\, .
\end{gathered}
\end{equation}

Letting $\ep \to 0^{+}$ the right hand side tends to
\begin{gather*}
\vf[\Theta(|v|)\, |v|]\Big\{ [\Theta'(|v|)]^{2}
\lan \A^{hk} v,v\ran \, \de_k |v| \de_h |v|  \\ +
\Theta'(|v|) \Theta(|v|) [ \lan \A^{hk} v, \de_h v \ran
 \de_k |v|  +
\lan \A^{hk}  \de_k v, v  \ran \de_h |v|] \\ +
 \Theta^{2}(|v|) \lan \A^{hk} \de_k v, \de_h v \ran \Big\}  \\ +
\vf'[\Theta(|v|)\, |v|] \Big\{
\Theta(|v|)[\Theta'(|v|)]^{2}|v| \lan \A^{hk} v,v\ran\, \de_k |v|\, \de_h
|v| \\ + 
\Theta^{2}(|v|)\, \Theta'(|v|) [ \lan \A^{hk} v,v\ran\,\de_k |v|\, \de_h
|v|  + |v|  \lan \A^{hk} \de_k v,v\ran\, \de_h |v|  ]  \\  + 
\Theta^{3}(|v|)  \lan \A^{hk} \de_k v,v\ran\, \de_h |v|  \Big\}
\end{gather*}
Collecting similar terms gives 
\begin{equation}
\begin{gathered}\label{eq:rhstt}
\vf[\Theta(|v|)\, |v|]\, \Theta^{2}(|v|)\, \lan \A^{hk} \de_k v, \de_h v\ran
 \\  +
\vf[\Theta(|v|)\, |v|]\, \Theta'(|v|)\, \Theta(|v|)\, \lan \A^{hk} v,\de_h v \ran \de_k |v|
 \\   +
\Theta(|v|)\big\{ \vf[\Theta(|v|)\, |v|]\,\Theta'(|v|)  \\  +
\vf'[\Theta(|v|)\, |v|]\, 
\Theta(|v|)\,  [\Theta'(|v|)\, |v| +
\Theta(|v|)]\big\} \lan \A^{hk} \de_k v, v\ran \, \de_h |v| \\   + 
\Theta'(|v|) \big\{ \vf[\Theta(|v|)\, |v|]\,\Theta'(|v|) \\   +
\vf'[\Theta(|v|)\, |v|]\, \Theta(|v|)\, [\Theta'(|v|)\, |v|+ \Theta(|v|)]\big\}
\lan \A^{hk} v,v\ran\, \de_k  |v|\, \de_h |v| 
\end{gathered}
\end{equation}
on the set $\Om_{0}=\{x\in \Om\ |\ v(x)\neq 0\}$.
In view of  \eqref{eq:H2psi} and \eqref{eq:tH'} we have
\begin{gather*}
\vf[\Theta(|v|)\, |v|]\, \Theta^{2}(|v|) = 1, 
\ \vf[\Theta(|v|)\, |v|]\, \Theta'(|v|)\, \Theta(|v|) =  \Theta'(|v|)/\Theta(|v|), \\
\vf[\Theta(|v|)\, |v|]\,\Theta'(|v|) +
\vf'[\Theta(|v|)\, |v|]\, \Theta(|v|)\, [\Theta'(|v|)\, |v|+ \Theta(|v|)] \\   =
-\Theta'(|v|)/\Theta^{2}(|v|).
\end{gather*}

Substituting these equalities in \eqref{eq:rhstt} and keeping in mind \eqref{eq:formulona}, we find
\begin{gather*}
\lim_{\ep\to 0^{+}}\lan \A^{hk} \de_k u_{\ep},\de_h (\vf(|u_{\ep}|)\, u_{\ep}\ran \\  =
 \lan \A^{hk} \de_k v, \de_h v\ran + 
 \Theta'(|v|)/\Theta(|v|)\, [\lan \A^{hk} v, \de_h v\ran \,  \de_k |v| -
 \lan \A^{hk} \de_k v,  v\ran \,  \de_h |v| ]\, .\\
  - \left(\Theta'(|v|)/\Theta(|v|)\right)^2  \lan \A^{hk}  v,  v\ran \, \de_k |v|\,  \de_h |v|\, .
 \end{gather*}
 on $\Om_{0}$. Thanks to  \eqref{eq:identities} this is the same as
 \begin{gather*}
\lan \A^{hk} \de_k v, \de_h v\ran 
+\LA(|v|)\, |v|^{-2}  \left(\lan \A^{hk}  v, \de_h v \ran  -
\overline{\lan (\A^{kh})^* v, \de_h v \ran} \right) \Re \lan v, \de_{k} v\ran  \\
- \LA^{2}(|v|)\, |v|^{-4} \lan \A^{hk} v,v\ran \Re \lan v, \de_{k} v\ran \Re \lan v, \de_{h} v\ran\, .
\end{gather*}

As in \cite[(58)]{CM2021}
one can prove that each term in the last expression of \eqref{eq:formulona} can be
majorized by $L^{1}$ functions not depending on $\ep$. 
By the Lebesgue dominated convergence theorem, we get
\begin{equation}\label{eq:weget}
\begin{gathered}
\lim_{\ep\to 0^{+}} \Re \int_{\Om}\lan \A^{hk} \de_k u_{\ep},\de_h (\vf(|u_{\ep}|)\, u_{\ep}\ran dx =
\Re \int_{\Om} \Big(\lan \A^{hk} \de_k  v, \de_h v\ran  \\
+\LA(|v|)\, |v|^{-2} \lan \left(\A^{hk}- (\A^{kh})^*\right)v, \de_h v \ran \Re \lan v, \de_{k} v\ran  \\
- \LA^{2}(|v|)\, |v|^{-4} \lan \A^{hk} v,v\ran \Re \lan v, \de_{k} v\ran \Re \lan v, \de_{h} v\ran
\Big) dx  \, .
\end{gathered}
\end{equation}

The left hand side 
being non negative (see \eqref{eq:defdiss0}), inequality
\eqref{eq:cond1} holds for any $v\in [\Cspt^{1}(\Om)]^{m}$.

Let  now $v\in [\Hspt^{1}(\Om)]^{m}$ and 
$v_{n}\in [\Cspt^{\infty}(\Om)]^{m}$  such that
$v_{n}\to v$ almost everywhere
in $\Om$ and in $H^{1}$ norm. 
Reasoning as  in \cite[Lemma 5]{CM2021} 
one can prove that
\begin{gather*}
\lim_{n\to\infty}
\Re \int_{\Om} \Big(\lan \A^{hk} \de_k	 v_{n}, \de_h v_{n}\ran 
\\ + 
\LA(|v_{n}|)\, |v_{n}|^{-2} \lan \left(\A^{hk}- (\A^{kh})^*\right)v_{n}, \de_h v_{n} \ran \Re \lan v_{n}, \de_{k} v_{n}\ran  \\
- \LA^{2}(|v_{n}|)\, |v_{n}|^{-4} \lan \A^{hk} v_{n},v_{n}\ran \Re \lan v_{n}, \de_{k} v_{n}\ran \Re \lan v_{n}, \de_{h} v_{n}\ran
\Big) dx\\  =
\Re \int_{\Om} \Big(\lan \A^{hk} \de_k	 v, \de_h v\ran 
+\LA(|v|)\, |v|^{-2} \lan \left(\A^{hk}- (\A^{kh})^*\right)v, \de_h v \ran \Re \lan v, \de_{k} v\ran  \\
- \LA^{2}(|v|)\, |v|^{-4} \lan \A^{hk} v,v\ran \Re \lan v, \de_{k} v\ran \Re \lan v, \de_{h} v\ran
\Big) dx \, ,
\end{gather*}
and the result follows.
\end{proof}

We have also
\begin{lemma}\label{le:7}
Let $\Om$ be  a domain in $\R^N$. The operator \eqref{eq:A} is strict  $L^{\Phi}$-dissipative if and only if
there exists $\kappa>0$ such that
\begin{equation}\label{eq:cond1str}
\begin{gathered}
\Re \int_{\Om} \Big(\lan \A^{hk} \de_k	 v, \de_h v\ran 
 +
\LA(|v|)\, |v|^{-2} \lan \left(\A^{hk}- (\A^{kh})^*\right)v, \de_h v \ran \Re \lan v, \de_{k} v\ran  \\
- \LA^{2}(|v|)\, |v|^{-4} \lan \A^{hk} v,v\ran \Re \lan v, \de_{k} v\ran \Re \lan v, \de_{h} v\ran
\Big) dx  \geq
 \kappa \int_{\Om}  |\nabla v|^2 dx
\end{gathered}
\end{equation}
for any $v \in [\Hspt^{1}(\Om)]^m$. 
\end{lemma}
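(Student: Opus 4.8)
The plan is to follow the proof of Lemma \ref{le:lemma1} almost verbatim, the crucial observation being that the substitution employed there is exactly $v=\sqrt{\vf(|u|)}\, u$; consequently the extra term $\kappa\int_{\Om}|\nabla(\sqrt{\vf(|u|)}\, u)|^{2}dx$ in the definition \eqref{eq:defdiss0str} of strict $L^{\Phi}$-dissipativity is, after that substitution, precisely the term $\kappa\int_{\Om}|\nabla v|^{2}dx$ occurring in \eqref{eq:cond1str}. Hence no new algebraic identity is needed, and the only genuinely new point will be the passage to the limit in this quadratic term within the necessity part.

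For the sufficiency, assume \eqref{eq:cond1str}. If $r\geq 0$ I would take $u\in[\Hspt^{1}(\Om)]^{m}$ with $\vf(|u|)\, u\in[\Hspt^{1}(\Om)]^{m}$ and set $v=\sqrt{\vf(|u|)}\, u$, which lies in $[\Hspt^{1}(\Om)]^{m}$ by Lemma \ref{lemma:fipsi}. The pointwise identity established in the sufficiency part of Lemma \ref{le:lemma1} shows that, with this $v$, the quantity $\Re\int_{\Om}\lan\A^{hk}\de_{k}u,\de_{h}(\vf(|u|)\, u)\ran\, dx$ equals the left-hand side of \eqref{eq:cond1str}; using \eqref{eq:cond1str} then yields
$$
\Re\int_{\Om}\lan\A^{hk}\de_{k}u,\de_{h}(\vf(|u|)\, u)\ran\, dx\;\geq\;\kappa\int_{\Om}|\nabla v|^{2}dx\;=\;\kappa\int_{\Om}|\nabla(\sqrt{\vf(|u|)}\, u)|^{2}dx,
$$
that is, \eqref{eq:defdiss0str}. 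For $-1<r<0$ I would argue by duality exactly as in Lemma \ref{le:lemma1}: setting $w=\vf(|u|)\, u$, so that $u=\psi(|w|)\, w$, inequality \eqref{eq:defdiss0str} is equivalent to the corresponding strict dissipativity inequality for the operator with coefficients $(\A^{kh})^{*}$ relative to $\psi$ (the right-hand side being unchanged, since $\sqrt{\psi(|w|)}\, w=\sqrt{\vf(|u|)}\, u$ by \eqref{eq:phipsi}); by Lemma \ref{lemma:new} the case $r\geq0$ just treated applies, and the resulting condition for $(\A^{kh})^{*}$ and $\widetilde{\LA}$ coincides with \eqref{eq:cond1str} for $\A^{hk}$ and $\LA$, because $\widetilde{\LA}=-\LA$ by \eqref{eq:GAtilde}.

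For the necessity, assume $A$ is strict $L^{\Phi}$-dissipative with constant $\kappa$. Given $v\in[\Cspt^{1}(\Om)]^{m}$, I would use the same regularization as in Lemma \ref{le:lemma1}, namely $g_{\ep}=\sqrt{|v|^{2}+\ep^{2}}$ and $u_{\ep}=\Theta(g_{\ep})\, v$; then $u_{\ep}$ and $\vf(|u_{\ep}|)\, u_{\ep}$ belong to $[\Cspt^{1}(\Om)]^{m}$, so \eqref{eq:defdiss0str} applies with $u=u_{\ep}$. As $\ep\to0^{+}$, the left-hand side of \eqref{eq:defdiss0str} converges to the left-hand side of \eqref{eq:cond1str}, by the very computation carried out in Lemma \ref{le:lemma1} (expansion \eqref{eq:formulona}, the uniform $L^{1}$ bounds of \cite[(58)]{CM2021}, and dominated convergence). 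For the right-hand side, set $v_{\ep}=\sqrt{\vf(|u_{\ep}|)}\, u_{\ep}$. Since $|u_{\ep}|=\Theta(g_{\ep})\, |v|\to\Theta(|v|)\, |v|=\ze(|v|)$ pointwise and $\Theta^{2}(t)\, \vf(\ze(t))=1$ by \eqref{eq:H2psi}, one gets $v_{\ep}\to v$ almost everywhere in $\Om$; moreover, expressing $|\nabla v_{\ep}|^{2}$ through \eqref{eq:dircomp} applied to $u_{\ep}$ and estimating its terms as in \cite{CM2021}, one checks that $\{v_{\ep}\}$ remains bounded in $[H^{1}(\Om)]^{m}$. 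Hence $v_{\ep}\rightharpoonup v$ weakly in $[H^{1}(\Om)]^{m}$, and weak lower semicontinuity of the Dirichlet integral gives $\liminf_{\ep\to0^{+}}\int_{\Om}|\nabla v_{\ep}|^{2}dx\geq\int_{\Om}|\nabla v|^{2}dx$. Passing to the limit in \eqref{eq:defdiss0str} for $u_{\ep}$ then establishes \eqref{eq:cond1str} for every $v\in[\Cspt^{1}(\Om)]^{m}$; the extension to arbitrary $v\in[\Hspt^{1}(\Om)]^{m}$ follows by approximating $v$ by $v_{n}\in[\Cspt^{\infty}(\Om)]^{m}$ with $v_{n}\to v$ in $H^{1}$ and almost everywhere, exactly as in Lemma \ref{le:lemma1}: the left-hand side of \eqref{eq:cond1str} is continuous along such sequences by the argument of \cite[Lemma 5]{CM2021}, while $\int_{\Om}|\nabla v_{n}|^{2}dx\to\int_{\Om}|\nabla v|^{2}dx$.

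I expect the main obstacle to be precisely the treatment of the quadratic term in the necessity part. One cannot in general hope for strong $H^{1}$ convergence $v_{\ep}\to v$: by Remark \ref{rem:lemmino}, $|\nabla(\sqrt{\vf(|u|)}\, u)|^{2}$ need not even be comparable with $\vf(|u|)\, |\nabla u|^{2}$ unless $t\vf'(t)/\vf(t)$ is bounded, so one is forced to pass to the limit through weak lower semicontinuity, and for this the uniform bound on $\int_{\Om}|\nabla v_{\ep}|^{2}dx$ has to be extracted from \eqref{eq:dircomp} by the same careful domination already used in \cite{CM2021} for the first-order terms entering \eqref{eq:formulona}. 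Everything else is a routine transcription of the proof of Lemma \ref{le:lemma1}.
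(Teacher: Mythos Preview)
Your sufficiency argument is identical to the paper's, including the duality reduction for $-1<r<0$ via \eqref{eq:phipsi} and \eqref{eq:GAtilde}.

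In the necessity part you diverge from the paper in how you handle the right-hand side $\kappa\int_{\Om}|\nabla(\sqrt{\vf(|u_{\ep}|)}\,u_{\ep})|^{2}dx$. You propose to show $v_{\ep}=\sqrt{\vf(|u_{\ep}|)}\,u_{\ep}$ is uniformly bounded in $H^{1}$, extract a weakly convergent subsequence, and invoke weak lower semicontinuity of the Dirichlet integral. The paper instead writes $v_{\ep}=\ro_{\ep}v$ with $\ro_{\ep}=\sqrt{\vf(|u_{\ep}|)}\,\Theta(g_{\ep})$, and computes directly (using \eqref{eq:H2psi} and \eqref{eq:tH'}) that $\ro_{\ep}\to 1$ and $\de_{h}\ro_{\ep}\to 0$ pointwise on $\{v\neq 0\}$, so that $\de_{h}v_{\ep}\to\de_{h}v$ pointwise there; then Fatou's Lemma gives $\int_{\Om}|\nabla v|^{2}dx\leq\liminf_{\ep\to 0^{+}}\int_{\Om}|\nabla v_{\ep}|^{2}dx$ immediately. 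This is cleaner: Fatou needs only nonnegativity and pointwise convergence, so the paper never has to establish any uniform integrability or $H^{1}$ bound on $v_{\ep}$. Your route would work too, but the uniform $H^{1}$ bound you need is not quite the content of the $L^{1}$ majorants in \cite[(58)]{CM2021} (those dominate the terms of \eqref{eq:formulona}, not $|\nabla v_{\ep}|^{2}$ itself), so you would still owe a short computation---essentially the same $\ro_{\ep}$ calculation the paper does---to justify it. The density step from $[\Cspt^{1}(\Om)]^{m}$ to $[\Hspt^{1}(\Om)]^{m}$ is handled identically in both approaches.
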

\begin{proof}
\textit{Sufficiency.} As in the proof of the previous Lemma, 
	suppose $r\geq 0$ and take  $v=\sqrt{\vf(|u|)}\, u$, where $u\in [\Hspt^{1}(\Om)]^m$ is such that $\vf(|u|)\, u\in [\Hspt^{1}(\Om)]^m$.
	In Lemma \ref{le:lemma1} we showed that the left hand side of \eqref{eq:defdiss0str} coincides
	with the left hand side of \eqref{eq:cond1str}.  The right hand sides  being  equal, the sufficiency is proved when $r\geq 0$.
	
	If $-1<r<0$, 
	setting $w=\vf(|u|)\, u$, i.e. $u=\psi(|w|)\, w$,  and recalling \eqref{eq:phipsi}, we  can write  condition \eqref{eq:defdiss0str} as
$$
\Re \int_{\Om} \lan (\A^{kh})^{*} \de_{k} w, \de_{h}(\psi(|w|)\, w)\ran\, dx \geq \kappa \int_{\Om} | \nabla(\sqrt{\psi(|w|)}\, w)|^2 dx
$$
for any $w\in [\Hspt^{1}(\Om)]^{m}$ such that $\psi(|w|)\, w\in [\Hspt^{1}(\Om)]^{m}$.
As in the previous Lemma this implies \eqref{eq:cond1str} for any $v\in [\Hspt^{1}(\Om)]^{m}$.

	\textit{Necessity.} As in the proof of Necessity in Lemma \ref{le:lemma1}, let $v\in [\Cspt^{1}(\Om)]^m$ and define $u_{\ep}=\Theta(g_{\ep})\, v$.
	 Let us consider the integral
	$$
	\int_{\Om} | \nabla(\sqrt{\vf(|u_{\ep}|)}\, u_{\ep})|^2 dx\, .
	$$
	
	Let us write $\sqrt{\vf(|u_{\ep}|)}\, u_{\ep}$ as 
	$\ro_{\ep}v$, where $\ro_{\ep}=\sqrt{\vf(|u_{\ep}|)}\, \Theta(g_{\ep})$. Keeping in mind \eqref{eq:H2psi}, we find
	$$
	\lim_{\ep\to 0^{+}}\ro_{\ep}=\lim_{\ep\to 0^{+}}\sqrt{\vf[\Theta(g_{\ep}) |v|]}\, \Theta(g_{\ep})=
	\sqrt{\vf[\Theta(|v|)  |v|]}\, \Theta(|v|)=1
	$$
	on $\Om_{0}=\{x\in \Om\ |\ v(x)\neq 0\}$. Moreover
	\begin{gather*}
\lim_{\ep\to 0^{+}} \de_{h} \ro_{\ep} = \lim_{\ep\to 0^{+}} \Big(\frac{1}{2}\, \frac{\vf'[\Theta(g_{\ep}) |v|]}{\sqrt{\vf[\Theta(g_{\ep}) |v|]}}\, (|v|\, \Theta'(g_{\ep})\,  \de_{h}g_{\ep} + \Theta(g_{\ep})\, \de_{h}|v|)
\Theta(g_{\ep})  \\
+ \sqrt{\vf[\Theta(g_{\ep}\, |v|]}\, \Theta'(g_{\ep})\, \de_{h}g_{\ep}\Big) \\ =
\frac{\vf'[\Theta(|v|) |v|]\, \Theta(|v|)[\Theta'(|v|) |v| + \Theta(|v|)] + 2 \vf[\Theta(|v|) |v|]  \, \Theta'(|v|)}{2 \sqrt{\vf[\Theta(|v|) |v|]}}\, .
\end{gather*}

Equality 
\eqref{eq:tH'} shows that the numerator in the last expression can be written as
\begin{gather*}
-\Theta'(|v|)/\Theta^{2}(|v|)\,  + \vf[\Theta(|v|) |v|]  \, \Theta'(|v|) \\ = 
\Theta'(|v|)( -1 + \vf[\Theta(|v|) |v|] \, \Theta^{2}(|v|)) /\Theta^{2}(|v|) = 0
\end{gather*}
(see also \eqref{eq:H2psi}) and then 
$$
\lim_{\ep\to 0^{+}} \de_{h} \ro_{\ep} = 0
$$
on $\Om_{0}$. This implies that
$$
\lim_{\ep\to 0^{+}} \de_{h}(\sqrt{\vf(|u_{\ep}|)}\, u_{\ep}) = 
\lim_{\ep\to 0^{+}} \de_{h}(\ro_{\ep} v) =
\lim_{\ep\to 0^{+}}(v \de_{h}\ro_{\ep} + \ro_{\ep} \de_{h} v) = \de_{h}v
$$
on $\Om_{0}$.

By Fatou's Lemma we get
$$
\int_{\Om}  |\nabla v|^2 dx \leq \liminf_{\ep\to 0^+}\int_{\Om} |\nabla( \sqrt{\vf(|u_{\ep}|)}\, u_{\ep})|^2 dx\, .
$$

	On the other hand we know  that \eqref{eq:weget} holds and therefore the inequality
	$$
	\Re \int_{\Om} \lan \A^{hk} \de_{k} u_{\ep}, \de_{h}(\vf(|u_{\ep}|)\, u_{\ep})\ran\, dx \geq 
\kappa \int_{\Om} | \nabla(\sqrt{\vf(|u_{\ep}|)}\, u_{\ep})|^2 dx
	$$
	implies \eqref{eq:cond1str} for any $v\in [\Cspt^{1}(\Om)]^m$.
	
	The result for any $v\in [\Hspt^{1}(\Om)]^{m}$ follows by approximating  $v$ by a sequence
$v_{n}\in [\Cspt^{\infty}(\Om)]^{m}$ (as in the previous Lemma).
\end{proof}

We conclude this Section with the following Corollary concerning the strict  $L^{\Phi}$-dissipativity
of the operator \eqref{eq:A}.

\begin{corollary}\label{co:co1}
 Suppose
 \begin{equation}\label{eq:supL}
\sup_{t>0}\LA^{2}(t)<1\, .
\end{equation}
The operator $A$ is strict  $L^{\Phi}$-dissipative if and only if there exists $\kappa>0$ such that
$A-\kappa \Delta$ is $L^{\Phi}$-dissipative.
\end{corollary}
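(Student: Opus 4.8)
The plan is to exploit the characterization of $L^\Phi$-dissipativity and strict $L^\Phi$-dissipativity provided by Lemmas~\ref{le:lemma1} and~\ref{le:7}, and to observe that the quadratic forms appearing in conditions \eqref{eq:cond1} and \eqref{eq:cond1str} differ, upon passing from $A$ to $A-\kappa\Delta$, only by the single term $\kappa\int_\Om|\nabla v|^2\,dx$ coming from the extra $-\kappa\Delta$. More precisely, replacing $\A^{hk}$ by $\A^{hk}-\kappa\,\delta_{hk}I$ in \eqref{eq:cond1}: the first term gains $-\kappa\int_\Om|\nabla v|^2$; the antisymmetric term $\lan(\A^{hk}-(\A^{kh})^*)v,\de_h v\ran$ is unchanged since $\delta_{hk}I$ is symmetric and $h,k$ are summed symmetrically; and in the last term $\lan\A^{hk}v,v\ran$ acquires $-\kappa\,\delta_{hk}|v|^2$, which contributes $+\kappa\,\LA^2(|v|)\,|v|^{-2}|\Re\lan v,\de_k v\ran|^2 = \kappa\,\LA^2(|v|)\,|\nabla|v||^2$ after using the identity $\de_k|v|=|v|^{-1}\Re\lan v,\de_k v\ran$ from \eqref{eq:identities}. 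Hence $A-\kappa\Delta$ is $L^\Phi$-dissipative precisely when, for all $v\in[\Hspt^1(\Om)]^m$, the form in \eqref{eq:cond1} for $A$ is bounded below by $\kappa\int_\Om\bigl(|\nabla v|^2-\LA^2(|v|)\,|\nabla|v||^2\bigr)dx$.

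First I would record this computation as the key algebraic identity, and then treat the two implications. For the ``if'' direction: if $A-\kappa\Delta$ is $L^\Phi$-dissipative for some $\kappa>0$, then the form in \eqref{eq:cond1} for $A$ dominates $\kappa\int_\Om(|\nabla v|^2-\LA^2(|v|)|\nabla|v||^2)dx$, and since $|\nabla|v||^2\le|\nabla v|^2$ pointwise (the standard Kato-type inequality, valid for vector-valued $v$) together with $\LA^2(|v|)\le\sup_{t>0}\LA^2(t)=:\LA^2_\infty<1$, we get $|\nabla v|^2-\LA^2(|v|)|\nabla|v||^2\ge(1-\LA^2_\infty)|\nabla v|^2$; thus the form in \eqref{eq:cond1} dominates $\kappa(1-\LA^2_\infty)\int_\Om|\nabla v|^2\,dx$, which is exactly condition \eqref{eq:cond1str} with constant $\kappa(1-\LA^2_\infty)>0$, so $A$ is strict $L^\Phi$-dissipative by Lemma~\ref{le:7}. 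For the ``only if'' direction: if $A$ is strict $L^\Phi$-dissipative, Lemma~\ref{le:7} gives that the form in \eqref{eq:cond1} dominates $\kappa_0\int_\Om|\nabla v|^2\,dx$ for some $\kappa_0>0$; I would then choose $\kappa\in(0,\kappa_0]$ and note that $\kappa_0|\nabla v|^2\ge\kappa|\nabla v|^2\ge\kappa(|\nabla v|^2-\LA^2(|v|)|\nabla|v||^2)$ (the last step again using $|\nabla|v||^2\ge0$, or more simply $\LA^2\ge0$ and dropping that nonnegative term is the wrong direction --- so instead one keeps $\kappa|\nabla v|^2 \ge \kappa(|\nabla v|^2 - \LA^2(|v|)|\nabla|v||^2)$ because the subtracted quantity is nonnegative), which by the identity above says exactly that $A-\kappa\Delta$ satisfies \eqref{eq:cond1}, i.e.\ is $L^\Phi$-dissipative by Lemma~\ref{le:lemma1}.

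The one point requiring care --- and the likely main obstacle --- is the passage from ``$A-\kappa\Delta$ is $L^\Phi$-dissipative'' to the inequality for the form of $A$ with the exact right-hand side $\kappa\int_\Om(|\nabla v|^2-\LA^2(|v|)|\nabla|v||^2)dx$: one must verify that the characterization of Lemma~\ref{le:lemma1} applies to the perturbed operator $A-\kappa\Delta$, which is immediate since $A-\kappa\Delta$ is again of the form \eqref{eq:A} with bounded (indeed, real-analytic constant-shifted) coefficients $\A^{hk}-\kappa\delta_{hk}I$, and that the substitution into \eqref{eq:cond1} produces exactly the three contributions claimed. The identity $\Re\lan v,\de_k v\ran\,\Re\lan v,\de_h v\ran\,\delta_{hk}=|\Re\lan v,\sum_k\de_k v\,e_k\ran|^2$ must be handled componentwise: $\sum_{h,k}\delta_{hk}\Re\lan v,\de_k v\ran\Re\lan v,\de_h v\ran=\sum_k(\Re\lan v,\de_k v\ran)^2=\sum_k|v|^2(\de_k|v|)^2=|v|^2|\nabla|v||^2$, so the term $-\LA^2(|v|)|v|^{-4}(-\kappa|v|^2)|v|^2|\nabla|v||^2=\kappa\LA^2(|v|)|\nabla|v||^2$ indeed appears with a plus sign. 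Once these bookkeeping steps are in place, hypothesis \eqref{eq:supL} enters only through the bound $\LA^2_\infty<1$ used to make $(1-\LA^2_\infty)$ a strictly positive constant in the ``if'' direction, and everything else is elementary.
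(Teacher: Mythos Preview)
Your proposal is correct and follows essentially the same approach as the paper: both arguments hinge on applying Lemmas~\ref{le:lemma1} and~\ref{le:7} to $A$ and $A-\kappa\Delta$, computing that the substitution $\A^{hk}\mapsto\A^{hk}-\kappa\,\delta_{hk}I$ in \eqref{eq:cond1} produces exactly the extra term $-\kappa\int_\Om(|\nabla v|^2-\LA^2(|v|)\,|\nabla|v||^2)\,dx$, and then using $|\nabla|v||^2\le|\nabla v|^2$ together with \eqref{eq:supL} in the ``if'' direction and the nonnegativity of $\LA^2(|v|)\,|\nabla|v||^2$ in the ``only if'' direction. Your more explicit bookkeeping of the three contributions (and the verification that the antisymmetric term is unchanged) matches precisely what the paper does in displaying \eqref{eq:A-kD}.
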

\begin{proof}
 If the operator $A$ is strict  $L^{\Phi}$-dissipative, \eqref{eq:cond1str} holds.  This implies
\begin{equation}\label{eq:A-kD}
 \begin{gathered}
\Re \int_{\Om} \Big(\lan \A^{hk} \de_k	 v, \de_h v\ran 
\\ +
\LA(|v|)\, |v|^{-2} \lan \left(\A^{hk}- (\A^{kh})^*\right)v, \de_h v \ran \Re \lan v, \de_{k} v\ran  \\
- \LA^{2}(|v|)\, |v|^{-4} \lan \A^{hk} v,v\ran \Re \lan v, \de_{k} v\ran \Re \lan v, \de_{h} v\ran
\Big) dx  \geq\\
 \kappa \int_{\Om}\Big(  |\nabla v|^2 - \LA^{2}(|v|)\,  |\nabla|v||^2\Big) dx
\end{gathered}
\end{equation}
for any $v \in [\Hspt^{1}(\Om)]^m$.  Observing that if $\{a^{hk}_{ij}\}=\{\delta_{hk}\delta_{ij}\}$ we have
$$
|v|^{-4} \lan \A^{hk} v,v\ran \Re \lan v, \de_{k} v\ran \Re \lan v, \de_{h} v\ran = |v|^{-2} \Re \lan v, \de_{k} v\ran \Re \lan v, \de_{k}v\ran =
|\nabla|v||^{2},
$$
Lemma \ref{le:lemma1} shows that $A-\kappa \Delta$ is $L^{\Phi}$-dissipative.

Viceversa, if $A-\kappa \Delta$ is $L^{\Phi}$-dissipative, inequality \eqref{eq:A-kD} holds for any $v \in [\Hspt^{1}(\Om)]^m$. 
Since 
\begin{equation}\label{eq:nablav2}
|\nabla|v||^{2}\leq   |\nabla v|^2,
\end{equation}
we find
$$
 \int_{\Om}\Big(  |\nabla v|^2 - \LA^{2}(|v|)\,  |\nabla|v||^2\Big) dx \geq
 (1-\sup_{t>0}\LA^{2}(t)) \int_{\Om}  |\nabla v|^2 dx\, .
$$

Thanks to \eqref{eq:supL},  the inequality \eqref{eq:cond1str} holds with $\kappa$ replaced by the positive constant $\kappa(1-\sup_{t>0}\LA^{2}(t))$.
\end{proof}	

We remark that in the case of $L^p$-dissipativity, i.e. $\vf(t)=t^{p-2}$ ($1<p<\infty$), condition \eqref{eq:supL} is satisfied, because
$\LA^{2}(t)=(1-2/p)^{2}$.

\section{A necessary condition for the functional dissipativity when $N=2$}\label{sec:CN}

From now on we require  also the following condition on the function $\vf$:
\begin{enumerate}\setcounter{enumi}{5}
\item\label{item6}  the function
$$
|s\, \vf'(s)/ \vf(s)|
$$
is not decreasing.
\end{enumerate}

A first consequence of  condition \ref{item6} is the following

\begin{lemma}
The function $\LA^2(t)$ is not decreasing on $(0,+\infty)$.
\end{lemma}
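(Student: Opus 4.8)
The plan is to exploit the closed-form expression \eqref{eq:G=} for $\LA$ along the curve $t=s\sqrt{\vf(s)}$. Write $m(s)=s\,\vf'(s)/\vf(s)$; since $(s\vf(s))'=\vf(s)+s\vf'(s)>0$ by \ref{item2}, we have $m(s)>-1$, hence $m(s)+2>1>0$, and \eqref{eq:G=} becomes
$$
\LA\!\left(s\sqrt{\vf(s)}\right)=-\frac{m(s)}{m(s)+2},\qquad
\LA^{2}\!\left(s\sqrt{\vf(s)}\right)=\left(\frac{m(s)}{m(s)+2}\right)^{2}=:g(m(s)).
$$
Because $s\mapsto s\sqrt{\vf(s)}$ is a strictly increasing bijection of $(0,+\infty)$ onto itself (as recalled right after \eqref{eq:G=}), and its inverse $\ze$ is therefore strictly increasing, it is enough to show that $s\mapsto g(m(s))$ is not decreasing on $(0,+\infty)$.

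First I would record the elementary behaviour of $g(x)=(x/(x+2))^{2}$ on $(-1,+\infty)$. Since $x/(x+2)=1-2/(x+2)$ is strictly increasing on $(-1,+\infty)$, vanishes at $x=0$, is negative on $(-1,0)$ and positive on $(0,+\infty)$, the function $g$ is strictly decreasing on $(-1,0]$ and strictly increasing on $[0,+\infty)$, with minimum value $g(0)=0$.

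Next I would use \ref{item6} to pin down the sign of $m$. As $\vf\in C^{1}((0,+\infty))$ and $\vf>0$, the function $m$ is continuous, and by \ref{item6} the function $|m|$ is non-decreasing; hence the set $\{s:m(s)=0\}$ is closed (by continuity) and downward closed (if $|m(s_{1})|=0$ and $s_{0}<s_{1}$ then $0\le|m(s_{0})|\le|m(s_{1})|=0$), so it is empty, all of $(0,+\infty)$, or an interval $(0,s_{*}]$. On the complement $(s_{*},+\infty)$ the continuous function $m$ never vanishes and therefore keeps a constant sign. Two cases arise. If $m\ge 0$ on $(s_{*},+\infty)$, then there $m=|m|$ is non-decreasing with values in $[0,+\infty)$, where $g$ is increasing, so $g\circ m$ is non-decreasing. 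If $m\le 0$ on $(s_{*},+\infty)$, then there $m=-|m|$ is non-increasing with values in $(-1,0]$, where $g$ is decreasing, so $g\circ m$ is again non-decreasing. On $(0,s_{*}]$ (when this interval is non-empty) $g\circ m\equiv g(0)=0$, and continuity of $g\circ m$ at $s_{*}$ together with $g\ge 0$ glues the two pieces; hence $g\circ m$ is not decreasing on all of $(0,+\infty)$, which gives the assertion.

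The only point requiring a little care is the sign analysis of $m$; everything else is the one–variable calculus of the rational function $g$ composed with the change of variable $t=s\sqrt{\vf(s)}$. Note that this argument does not invoke \ref{item5}: the constant-sign property of $m$ for large $s$ that \ref{item5} would provide is already a consequence of \ref{item6} and the continuity of $m$.
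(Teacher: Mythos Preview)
Your proof is correct and follows essentially the same route as the paper: both arguments pass to the variable $s$ via $t=s\sqrt{\vf(s)}$, introduce $m(s)=s\vf'(s)/\vf(s)$ (called $\ga$ in the paper), use \ref{item6} and continuity to see that the zero set of $m$ is a downward-closed interval beyond which $m$ keeps a constant sign, and then reduce the monotonicity of $\LA^{2}$ to that of $|m|$. Your presentation via the auxiliary function $g(x)=(x/(x+2))^{2}$ and its monotonicity on $(-1,0]$ and $[0,+\infty)$ is a cleaner packaging of the same computation the paper carries out algebraically through the identity $|\ga(s)|\,\ga(t)=|\ga(t)|\,\ga(s)$.
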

\begin{proof}
Since the function $s\sqrt{\vf(s)}$ is stricly increasing and its range is $(0,+\infty)$, the function $\LA^2(t)$ 
is not decreasing if and only if the function $\LA^2(s\sqrt{\vf(s)})$ is not decreasing.
Define $\ga(s)= s\,\vf'(s)/\vf(s)$. Note that the monotoniciy of $|\ga(s)|$ (see condition \ref{item6}) implies that if there exists $\si>0$ such that $\ga(\si)=0$,
then $\ga(s)=0$ for any $0<s\leq \si$.  If there exists $s>0$ such that $\ga(s)=0$ put 
$$
\si_{0}=\inf\{ \si>0\ |\ \ga(\si)=0\}\, .
$$
If $\ga(s)\neq 0$ for any $s>0$, put $\si_{0}=0$.
Note that, in any case, $\ga(s)\neq 0$ for any $s>\si_{0}$.
Recalling \eqref{eq:G=}, we have that $\LA^{2}(s\sqrt{\vf(s)})=0$ for any $s\leq \si_0$  and $\LA^{2}(s\sqrt{\vf(s)})>0$ for any $s> \si_0$.
Then it is  it suffices to prove that  $\LA^{2}(s\sqrt{\vf(s)})\leq  \LA^{2}(t\sqrt{\vf(t)})$ for any $ \si_{0} < s<t$.
Defining 
$$
\ga(s)= \frac{s \vf'(s)}{s \vf'(s) + 2\vf(s)}
$$ 
and observing that condition \ref{item2} implies $\ga(s)+2>0$ (for any $s>0$), we have that $\LA^{2}(s\sqrt{\vf(s)})\leq  \LA^{2}(t\sqrt{\vf(t)})$ means
$$
|\ga(s)|\, (\ga(t) + 2) \leq |\ga(t)|\, (\ga(s) + 2) \, .
$$

Since $|\ga(s)|\, \ga(t)=|\ga(t)|\, \ga(s)$, the last inequality is equivalent to $|\ga(s)| \leq |\ga(t)|$, which is true in view of condition \ref{item6}.
\end{proof}

We remark that, since the function $\LA$ does not change the sign, the previous result implies the monotonicity of the bounded function
 $\LA(s)$ and then  the existence of the finite limit
  \begin{equation}\label{Lam}
\LA_{\infty}=\lim_{t\to +\infty} \LA(t) \, .
\end{equation}

We have also
  \begin{equation}\label{Lamsup}
\LA^{2}_{\infty}=\sup_{t>0} \LA^{2}(t) \, .
\end{equation}

The next theorem provides a necessary condition for the $L^{\Phi}$-dissipativity of operator $A$ when $N=2$.

\begin{theorem}\label{th:algcond}
    Let $\Om$ be a domain of $\R^2$. If the operator \eqref{eq:A} is $L^{\Phi}$-dissipative we have
       \begin{equation}\label{eq:algcond}
       \begin{gathered}
   \Re  \Big( \lan (\A^{hk}(x)\xi_{h}\xi_{k})\h,\h\ran  -\LA^{2}_{\infty}\, \lan (\A^{hk}(x)\xi_{h}\xi_{k})\om,\om\ran (\Re \lan 
    \h,\om\ran)^{2}\\
    +\LA_{\infty}\, (\lan (\A^{hk}(x)\xi_{h}\xi_{k})\om,\h\ran
	-\lan (\A^{hk}(x)\xi_{h}\xi_{k}) \h,\om\ran) 
	\Re\lan \h,\om\ran \Big)
	\geq 0
	\end{gathered}
	\end{equation}
	for almost every $x\in\Om$ and 
	for any $\xi\in\R^{2}$, $\h,\, \om\in \C^{m}$, $|\om|=1$. Here the constant $\LA_{\infty}$ is given by 
	\eqref{Lam}.
\end{theorem}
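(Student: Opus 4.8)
The plan is to derive the pointwise inequality \eqref{eq:algcond} from the integral inequality \eqref{eq:cond1} of Lemma \ref{le:lemma1} by a standard concentration/oscillation argument, testing with highly oscillating functions localized near a Lebesgue point of the coefficients. Fix a point $x_0\in\Om$ which is simultaneously a Lebesgue point of all the entries $a^{hk}_{ij}$, a direction $\xi\in\R^2$ with $|\xi|=1$, vectors $\h,\om\in\C^m$ with $|\om|=1$, and a large parameter. First I would choose a real cutoff $\chi\in\Cspt^\infty(\Om)$ supported near $x_0$ and, for a large integer $n$, set
\begin{equation*}
v_n(x)=\chi(x)\Bigl(\om\, g(n\,\xi\cdot x)+\tfrac1n\,\h\, h(n\,\xi\cdot x)\Bigr),
\end{equation*}
where $g,h$ are suitable $2\pi$-periodic real scalar profiles to be chosen; the idea being that $g$ carries the ``modulus'' direction $\om$ (so that, to leading order, $|v_n|\approx |g(n\xi\cdot x)|$ and $\Re\lan v_n,\de_k v_n\ran$ is governed by $g g'$), while the lower-order term in the $\h$ direction produces the cross terms $\lan\A^{hk}\om,\h\ran$ and $\lan\A^{hk}\h,\om\ran$ in the limit. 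The factor $1/n$ is calibrated so that $\de_k v_n$ picks up an $O(1)$ contribution from the $\h$ term, comparable to the $O(1)$ contribution from the $g$ term.

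The heart of the computation is to substitute $v_n$ into the left-hand side of \eqref{eq:cond1}, expand, and identify the leading-order term as $n\to\infty$. Because the coefficients are frozen to their value at $x_0$ in the limit (Lebesgue point) and the rapidly oscillating factors average out, the integral over $\Om$ divided by $\int\chi^2$ converges to an average over one period of the profile of the integrand of \eqref{eq:algcond}, evaluated at $x_0$ with $\xi_h\xi_k$ replaced by the corresponding products of components of $\xi$, and with $|v_n|$ ranging over the values of $|g|$. Here one uses that $\LA$ is a bounded monotone function with finite limit $\LA_\infty$ at infinity (equations \eqref{Lam}--\eqref{Lamsup}, which is where condition \ref{item6} enters): by additionally scaling the amplitude of the $g$-profile to be large, $\LA(|v_n|)$ and $\LA^2(|v_n|)$ can be forced as close as we like to $\LA_\infty$ and $\LA^2_\infty$ on the bulk of the support, so the surviving quadratic form is exactly the one displayed in \eqref{eq:algcond}. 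Nonnegativity of the average over the period, for all admissible profiles $g$, then forces the pointwise nonnegativity of the integrand of \eqref{eq:algcond} — this last step needs the profiles to be rich enough to make $|g'|^2$ and the sign of $g g'$ essentially arbitrary, which is why a genuine oscillation rather than a single bump is used.

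The main obstacle, and the step I would spend the most care on, is the bookkeeping of the limit: showing that all the ``error'' terms — those involving $\nabla\chi$, the difference between $a^{hk}_{ij}(x)$ and $a^{hk}_{ij}(x_0)$, and the mixed terms of the wrong order in $n$ — vanish in the limit, while the singular factors $|v_n|^{-2}$ and $|v_n|^{-4}$ (which are multiplied by $\Re\lan v_n,\de_k v_n\ran$ factors that also vanish on the zero set of $g$) stay integrable. This requires either choosing $g$ to be bounded away from zero, or handling the zero set of $g$ with the convention that the integrand is extended by zero there, exactly as in Lemma \ref{le:lemma1}; I would take $g$ strictly positive to sidestep the issue entirely, which is legitimate since $\Re\lan v_n,\de_k v_n\ran/|v_n|^2$ is then smooth. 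A secondary point is that $v_n$ must lie in $[\Hspt^1(\Om)]^m$ — immediate from $\chi\in\Cspt^\infty$ — and that one is allowed to test \eqref{eq:cond1} with such $v_n$, which Lemma \ref{le:lemma1} guarantees. Finally, since $\xi\in\R^2$ in the statement is arbitrary (not just unit), one recovers the general case from the unit case by the evident homogeneity of \eqref{eq:algcond} in $\xi$.
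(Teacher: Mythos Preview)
There is a genuine gap in your construction: the test function $v_n=\chi(\om\, g(n\xi\cdot x)+\tfrac1n\,\h\,h(n\xi\cdot x))$ does not, in the limit, reproduce the quadratic form \eqref{eq:algcond}. Since $\de_k v_n=\chi\,\xi_k\bigl(n\,\om\,g'+\h\,h'\bigr)+O(1)$, the gradient is dominated by the $\om$-component, and at the top order $n^2$ one finds
\[
\Re\lan\A^{hk}\de_k v_n,\de_h v_n\ran
-\LA^2(|v_n|)|v_n|^{-4}\lan\A^{hk}v_n,v_n\ran\Re\lan v_n,\de_k v_n\ran\Re\lan v_n,\de_h v_n\ran
\]
\[
=\chi^2\,n^2(g')^2\bigl(1-\LA^2(|v_n|)\bigr)\Re\lan(\A^{hk}\xi_h\xi_k)\om,\om\ran+O(n),
\]
while the middle term of \eqref{eq:cond1} contributes only at order $n$. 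Dividing by $n^2$ therefore yields at best $(1-\LA_\infty^2)\Re\lan(\A^{hk}\xi_h\xi_k)\om,\om\ran\geq 0$, i.e.\ ordinary ellipticity in the $\om$-direction, and never the full expression \eqref{eq:algcond} with its $\lan\A\h,\h\ran$ leading term and cross terms. Making the amplitude of $g$ large does not help: it forces $\LA\to\LA_\infty$ but does not change which bilinear entries survive at top order. Looking at subleading orders in $n$ does not help either, since nonnegativity of the full expression gives information only about the top order.

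The point you are missing is the separation of roles: one needs $v/|v|\approx\om$ while $\nabla v$ points in the $\h$-direction. The paper achieves this by taking $w=\mu\om+\p$ with $\mu\om$ a large \emph{constant} and $\p$ compactly supported, so that $\de_k w=\de_k\p$ carries only the $\h$-information; letting $\mu\to\infty$ then sends $\LA(|w|)\to\LA_\infty$, and a subsequent oscillation $\p=\h\,\vf\,e^{i\mu'\lan\xi,x\ran}$ extracts the symbol. The difficulty is that $\mu\om$ is not compactly supported; the paper cuts it off by the logarithmic function $g(\log|x|/\log R)$, whose gradient has vanishing $L^2$-norm as $R\to\infty$ precisely because $N=2$. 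This is where the two-dimensionality enters, and it is a warning sign that your argument, if it worked, would work in every dimension. (The passage from variable to constant coefficients is then handled by a separate scaling $v(x)=w((x-x_0)/\ep)$ at a Lebesgue point, close to what you sketch at the end.)
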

\begin{proof}
As in \cite[Theorem 2]{CM2006}, let us assume first that $\A^{hk}$ are constant matrices and that $\Om=\R^2$.
Let us fix $\om\in\C^{m}$ with $|\om|=1$ and take
   $
   v(x)=w(x)\, g(\log|x|/\log R)
   $,
   where 
   \begin{equation*}
       w(x)=\mu\om+\p(x),
   \end{equation*}
   $\mu,\,R\in\R^{+}$, $R>1$,
   $\p\in(\Cspt^{\infty}(\R^{2}))^{m}$, 
   $g\in C^{\infty}(\R)$, $g(t)=1$ if $t\leq 1/2$ and
   $g(t)=0$ if $t\geq 1$.

Put the function $v$ in \eqref{eq:cond1} and let $R\to +\infty$. Using the same arguments as in the first part of the proof of \cite[Theorem 2]{CM2006}
and observing that $\LA$ is continuous and $|\LA(t)|<1$ (see \cite[(32)]{CM2021}), we find
\begin{equation}\label{eq:cond1w}
\begin{gathered}
\Re \int_{B_{\del}(0)}\!\! \big(\lan \A^{hk} \de_k	 w, \de_h w\ran 
\\ 
- \LA^{2}(|w|)\, |w|^{-4} \lan \A^{hk} w,w\ran \Re \lan w, \de_{k} w\ran \Re \lan w, \de_{h} w\ran \\
+\LA(|w|)\, |w|^{-2} \lan \left(\A^{hk}- (\A^{kh})^*\right)w, \de_h w \ran \Re \lan w, \de_{k} w\ran 
\big) dx  \geq 0,
\end{gathered}
\end{equation}
where $\del>0$ is such that $\spt \p \subset B_{\del}(0)$.

The first term in \eqref{eq:cond1w} is 
$$
\Re \lan \A^{hk}\de_{k}w,\de_{h}w\ran =
   \Re \lan \A^{hk}\de_{k}\p,\de_{h}\p \ran ,
$$
while the second one can be written as
\begin{gather*}
      \LA^{2}(|w|) |w|^{-4}\Re\lan \A^{hk}w,w\ran \Re \lan 
       w,\de_{k}w\ran \Re \lan w,\de_{h}w\ran \\ =
       \LA^{2}(|\mu\om+\p|) |\mu\om+\p|^{-4}  \\   \times
       \Re\lan \A^{hk}(\mu\om+\p),\mu\om+\p\ran  \Re \lan 
	      \mu\om+\p,\de_{k}\p\ran  \Re \lan \mu\om+\p,\de_{h}\p\ran.
	      \end{gather*}	      
	     Finally the third one is equal to
	     \begin{gather*}
    \LA(|w|)\, |w|^{-2} \Re \lan \left(\A^{hk}- (\A^{kh})^*\right)w, \de_h w \ran \Re \lan w, \de_{k} w\ran  \\  
    =
     \LA(|\mu\om+\p|) |\mu\om+\p|^{-2}\Re(\lan (\A^{hk}- (\A^{kh})^*)(\mu\om+\p),\de_{h}\p\ran \Re \lan 
     \mu\om+\p,\de_{k}\p\ran .
\end{gather*}

Letting $\mu\to+\infty$ in \eqref{eq:cond1w}, we obtain
\begin{equation}\label{eq:cond1psi}
\begin{gathered}
\Re  \int_{\R^{2}}\big( \lan \A^{hk}\de_{k}\p,\de_{h}\p\ran
	  -\LA_{\infty}^{2} \lan \A^{hk}\om,\om\ran \Re \lan 
	  \om,\de_{k}\p\ran \Re \lan \om,\de_{h}\p\ran \\
	 + \LA_{\infty} ((\lan (\A^{hk}- (\A^{kh})^*)\om,\de_{h}\p\ran \Re \lan \om,\de_{k}\p\ran
  		  \big)\, dx  \geq 0. 
	  \end{gathered}
\end{equation}

Putting in \eqref{eq:cond1psi}
$$
\p(x)=\h\,\vf(x)\, e^{i\mu\lan\xi,x\ran}
$$
where $\h\in\C^{m}$, $\vf\in\Cspt^{\infty}(\R^{2})$ and $\mu$ is a real parameter,
by standard arguments (see, e.g., \cite[p.107--108]{ficheraLN}),  
we find \eqref{eq:algcond}.

If the matrices  $\A^{hk}$ are not constant and defined in $\Om$,  take
$$
v(x) = w((x-x_{0})/\ep)
$$
where $x_{0}\in\Om$ is a fixed point, $w\in [\Cspt^{\infty}(B_{1}(0))]^m$ and $0<\ep< \dist(x_{0},\de \Om)$.
Putting this particular $v$ in   \eqref{eq:cond1} we find
\begin{gather*}
	0\leq 
	\frac{1}{\ep^{2}}\Re \int_{\Om} \Big(\lan \A^{hk}(x) \de_k	 w((x-x_{0})/\ep), \de_h w((x-x_{0})/\ep)\ran 
 \\ +
\LA(|w((x-x_{0})/\ep)|) 
 |w((x-x_{0})/\ep)|^{-2} \\ \times
 \lan (\A^{hk}(x)- (\A^{kh})^*(x))w((x-x_{0})/\ep), \de_h w((x-x_{0})/\ep) \ran 
 \\  \times
\Re \lan w((x-x_{0})/\ep), \de_{k} w((x-x_{0})/\ep)\ran   \\ -
\LA^{2}(|w((x-x_{0})/\ep)|)\, |w((x-x_{0})/\ep)|^{-4} \lan \A^{hk}(x) w((x-x_{0})/\ep),w((x-x_{0})/\ep)\ran \\
 \times \Re \lan w((x-x_{0})/\ep), \de_{k} w((x-x_{0})/\ep)\ran \Re \lan w((x-x_{0})/\ep), \de_{h} w((x-x_{0})/\ep)\ran
\Big) dx 
\end{gather*}
and then
\begin{equation}\label{eq:doporeview}
\begin{gathered}
0\leq \Re \int_{\Om} \Big(\lan \A^{hk}(x_{0}+\ep y) \de_k w(y), \de_h w(y)\ran 
 \\  +
\LA(|w((y)|) 
 |w(y)|^{-2}
 \lan (\A^{hk}(x_{0}+\ep y)- (\A^{kh})^*(x_{0}+\ep y))w(y), \de_h w((y) \ran 
 \\  \times
\Re \lan w(y), \de_{k} w(y)\ran  - 
\LA^{2}(|w(y)|)\, |w(y)|^{-4} \lan \A^{hk}(x_{0}+\ep y) w(y),w(y)\ran \\
 \times
\Re \lan w(y), \de_{k} w(y)\ran \Re \lan w(y), \de_{h} w(y)\ran
\Big) dy   
\, .
\end{gathered}
\end{equation}
 Therefore 
 \begin{gather*}
\Re \int_{\Om} \Big(\lan \A^{hk}(x_{0}) \de_k w(y), \de_h w(y)\ran 
 \\ +
\LA(|w((y)|) 
 |w(y)|^{-2}
 \lan (\A^{hk}(x_{0})- (\A^{kh})^*(x_{0}y))w(y), \de_h w((y) \ran \\
 \times
\Re \lan w(y), \de_{k} w(y)\ran  - 
\LA^{2}(|w(y)|)\, |w(y)|^{-4} \lan \A^{hk}(x_{0}) w(y),w(y)\ran \\
 \times
\Re \lan w(y), \de_{k} w(y)\ran \Re \lan w(y), \de_{h} w(y)\ran
\Big) dy   \geq 0
\end{gather*}
almost everywhere, because this integral is the $\lim_{\ep\to 0^{+}}$ of \eqref{eq:doporeview} 
for almost any $x_{0}\in\Om$.
 The arbitrariness of $w\in [\Cspt^{\infty}(\R^{2})]^m$ and what we have already obtained for 
constant matrices give the result.
\end{proof}

With the same proof we have
\begin{theorem}\label{th:algcondstr}
    Let $\Om$ be a domain of $\R^2$. If the operator \eqref{eq:A} is strict  $L^{\Phi}$-dissipative, there  exists $\kappa>0$ such that
           \begin{gather*}
   \Re  \Big( \lan (\A^{hk}(x)\xi_{h}\xi_{k})\h,\h\ran  -\LA^{2}_{\infty}\, \lan (\A^{hk}(x)\xi_{h}\xi_{k})\om,\om\ran (\Re \lan 
    \h,\om\ran)^{2}\\
    +\LA_{\infty}\, (\lan (\A^{hk}(x)\xi_{h}\xi_{k})\om,\h\ran
	-\lan (\A^{hk}(x)\xi_{h}\xi_{k}) \h,\om\ran) 
	\Re\lan \h,\om\ran \Big)
	\geq \kappa\, |\xi|^{2} |\h|^{2}
	\end{gather*}
	for almost every $x\in\Om$ and 
	for any $\xi\in\R^{2}$, $\h,\, \om\in \C^{m}$, $|\om|=1$.
\end{theorem}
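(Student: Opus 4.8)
The idea is to repeat the proof of Theorem \ref{th:algcond} verbatim, replacing Lemma \ref{le:lemma1} by its strict counterpart Lemma \ref{le:7} and merely carrying along the extra term $\kappa\int_{\Om}|\nabla v|^2\,dx$ that now appears on the right-hand side of \eqref{eq:cond1str}. Here $\kappa$ is the fixed dissipativity constant of \eqref{eq:defdiss0str}; accordingly the only thing to verify is that $\kappa$ survives, unchanged, each of the three limiting procedures used to prove Theorem \ref{th:algcond}. There is no essential new difficulty — the whole work is this bookkeeping of $\kappa$, which passes through untouched because it is independent of all the parameters involved and because the identities used at each step are homogeneous of the right degree.

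\textbf{Constant coefficients on $\R^2$.} First I would take $\A^{hk}$ constant and $\Om=\R^2$ and insert into \eqref{eq:cond1str} the same test function $v=w\,g(\log|x|/\log R)$ with $w=\mu\om+\p$, $\p\in[\Cspt^\infty(B_{\del}(0))]^m$, $g$ as before. Letting $R\to+\infty$, the left-hand side tends, exactly as in Theorem \ref{th:algcond}, to the left-hand side of \eqref{eq:cond1w} written with $w$. For the right-hand side, I would expand $|\nabla v|^2=|\nabla w|^2 g_R^2+2\Re\lan(\nabla w)g_R,\,w\nabla g_R\ran+|w|^2|\nabla g_R|^2$; since $\spt\p\subset B_{\del}(0)$ while $\nabla g_R$ is carried by the annulus where $w=\mu\om$ is constant, the two-dimensional estimate $\int|\nabla g_R|^2\,dx=O\big((\log R)^{-1}\big)$ kills the last two summands, and the first tends to $\int_{B_{\del}(0)}|\nabla\p|^2\,dx$. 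Thus \eqref{eq:cond1w} acquires the additional term $\kappa\int_{B_{\del}(0)}|\nabla\p|^2\,dx$ on the right. Letting then $\mu\to+\infty$ as in the proof of Theorem \ref{th:algcond} (all three terms on the left are $O(1)$ in $\mu$ and converge, since $\LA$ is continuous, $|\LA|<1$ and $\LA(|w|)\to\LA_\infty$), and observing that the new right-hand side does not depend on $\mu$, one obtains the analogue of \eqref{eq:cond1psi} with $\kappa\int_{\R^2}|\nabla\p|^2\,dx$ added on the right.

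\textbf{Oscillation step.} Next, substituting $\p(x)=\h\,\vf(x)\,e^{i\tau\lan\xi,x\ran}$ with $\vf\in\Cspt^\infty(\R^2)$ real, $\xi\in\R^2$ and $\tau$ a real frequency, dividing by $\tau^2$ and letting $\tau\to+\infty$, the left-hand side produces, by the classical argument of Fichera \cite[p.107--108]{ficheraLN}, the number $\int_{\R^2}|\vf|^2\,dx$ times the bracket appearing in the statement (with $\A^{hk}$ constant), while $|\nabla\p|^2=|\h|^2\big(|\nabla\vf|^2+\tau^2|\xi|^2|\vf|^2\big)$ contributes on the right, at order $\tau^2$, exactly $\kappa|\h|^2|\xi|^2\int_{\R^2}|\vf|^2\,dx$. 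Dividing out $\int|\vf|^2$ and using the arbitrariness of $\vf$ (the matrices being constant, the bracket factors out of the integral) gives the asserted inequality, namely that the bracket is $\geq\kappa|\xi|^2|\h|^2$, for constant coefficients.

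\textbf{Variable coefficients.} Finally, for matrices $\A^{hk}(x)$ with complex-valued $L^\infty$ entries I would put $v(x)=w\big((x-x_0)/\ep\big)$, $w\in[\Cspt^\infty(B_1(0))]^m$, into \eqref{eq:cond1str}: under the change of variables $y=(x-x_0)/\ep$ every summand on the left and the term $\kappa\int|\nabla v|^2\,dx$ on the right scale by the same power of $\ep$, so one reaches the analogue of \eqref{eq:doporeview} with $\kappa\int_{B_1(0)}|\nabla w(y)|^2\,dy$ on the right. Letting $\ep\to0^+$ along a common Lebesgue point $x_0$ of the entries $a^{hk}_{ij}$, dominated convergence (the integrand is bounded uniformly in $\ep$, $w$ being fixed and $\LA$ bounded) replaces $\A^{hk}(x_0+\ep y)$ by $\A^{hk}(x_0)$ while leaving $\kappa$ intact. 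Since this inequality holds for every $w\in[\Cspt^\infty(B_1(0))]^m$ and is invariant under $w\mapsto w(\cdot/\la)$ and under translations, it holds for all $w\in[\Cspt^\infty(\R^2)]^m$ — which is precisely what the constant-coefficient argument above requires at the point $x_0$ (note that the test functions $v=\mu\om\, g_R+\p$ used there belong to $[\Cspt^\infty(\R^2)]^m$). This yields the claim, with the same $\kappa$, for almost every $x_0\in\Om$.
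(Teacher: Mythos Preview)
Your proposal is correct and is exactly the approach the paper intends: the paper's proof of Theorem \ref{th:algcondstr} is the single line ``With the same proof we have'', meaning one repeats the argument of Theorem \ref{th:algcond} while carrying the term $\kappa\int_{\Om}|\nabla v|^{2}\,dx$ from Lemma \ref{le:7} through the three limits $R\to\infty$, $\mu\to\infty$, $\ep\to 0^{+}$ and the oscillation step. Your bookkeeping of this extra term at each stage (in particular the $O((\log R)^{-1})$ decay of $\int|\nabla g_{R}|^{2}\,dx$ in dimension two, the $\mu$-independence of $\int|\nabla\p|^{2}\,dx$, the $\tau^{2}$-scaling of $|\nabla\p|^{2}$, and the common $\ep$-homogeneity of both sides under dilation) is accurate and matches what the paper leaves implicit.
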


\section{Elasticity}\label{sec:elast}

In this section we consider the two-dimensional linear system of elasticity \eqref{opelast}.
The Lam\'e coefficients $\la$, $\mu$ are supposed to be
measurable essentially bounded real valued functions such that
\begin{equation}\label{eq:lameinf}
\essinf_{x\in\Om} \mu(x)> 0\, ; \ \essinf_{x\in\Om} (\la(x)+2\mu(x))>0.
\end{equation}

The next Theorems provide necessary conditions for the $L^{\Phi}$-dissipativity and the strict $L^{\Phi}$-dissipativity  of  the elasticity operator.
\begin{theorem}\label{th:3}
If the operator \eqref{opelast} is $L^{\Phi}$-dissipative, then
    \begin{equation}\label{eq:condnuovaug}
\LA^{2}_{\infty} \leq   1 - \esssup_{x\in\Om}\left(\frac{\la + \mu}{\la+3\mu}\right)^2.
\end{equation}
\end{theorem}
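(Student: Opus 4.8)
The plan is to feed the Lamé operator into the algebraic necessary condition of Theorem~\ref{th:algcond} ($N=2$, and here $m=N=2$) and then optimise over the vectors $\h,\om$. For the Lamé system the contracted principal symbol is
$$
\A^{hk}(x)\xi_h\xi_k = \mu(x)\,|\xi|^2\, I + (\la(x)+\mu(x))\,\xi\otimes\xi =: M(x,\xi),
$$
a real symmetric $2\times2$ matrix. Since moreover $a^{hk}_{ij}=a^{kh}_{ji}$, i.e. $\A^{hk}=(\A^{kh})^*$ identically, the antisymmetric term in \eqref{eq:algcond} contains $\lan M\om,\h\ran-\lan M\h,\om\ran$ with the \emph{same} Hermitian matrix $M$, hence equals $-2i\,\Im\lan M\h,\om\ran$ and is purely imaginary; its real part vanishes. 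Thus for the Lamé operator \eqref{eq:algcond} reduces to
$$
\lan M(x,\xi)\h,\h\ran \geq \LA^{2}_{\infty}\,\lan M(x,\xi)\om,\om\ran\,(\Re\lan\h,\om\ran)^2
$$
for a.e.\ $x\in\Om$ and all $\xi\in\R^2$, $\h,\om\in\C^2$ with $|\om|=1$, where $\lan M\h,\h\ran,\ \lan M\om,\om\ran>0$ by \eqref{eq:lameinf}.

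Next I would fix a point $x$ at which this holds and take $\xi=e_1$, so $M=M(x,e_1)$ is the positive definite diagonal matrix with entries $\la+2\mu$ and $\mu$. For a fixed unit vector $\om$, Cauchy–Schwarz in the inner product induced by $M$ gives
$$
(\Re\lan\h,\om\ran)^2\leq\bigl|\lan M^{1/2}\h,M^{-1/2}\om\ran\bigr|^2\leq\lan M\h,\h\ran\,\lan M^{-1}\om,\om\ran,
$$
with equality when $\h$ is a real multiple of $M^{-1}\om$. Hence $\min_{\h}\lan M\h,\h\ran/(\Re\lan\h,\om\ran)^2=1/\lan M^{-1}\om,\om\ran$, and the displayed necessary condition is equivalent to
$$
\LA^{2}_{\infty}\cdot\sup_{|\om|=1}\lan M\om,\om\ran\,\lan M^{-1}\om,\om\ran\leq 1 .
$$

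Finally I would compute that supremum by an elementary (Kantorovich-type) calculation: writing $s=|\om_1|^2\in[0,1]$, $\alpha=\la+2\mu$, $\beta=\mu$, one gets $\lan M\om,\om\ran\,\lan M^{-1}\om,\om\ran=1+s(1-s)(\alpha/\beta+\beta/\alpha-2)$, which is maximised at $s=\frac{1}{2}$ with value $(\alpha+\beta)^2/(4\alpha\beta)=(\la+3\mu)^2/\bigl(4\mu(\la+2\mu)\bigr)$ (note $\la+3\mu=\mu+(\la+2\mu)>0$). Therefore
$$
\LA^{2}_{\infty}\leq\frac{4\mu(\la+2\mu)}{(\la+3\mu)^2}=1-\Bigl(\frac{\la+\mu}{\la+3\mu}\Bigr)^2\qquad\text{for a.e.\ }x\in\Om,
$$
using $(\la+3\mu)^2-(\la+\mu)^2=4\mu(\la+2\mu)$; taking the essential infimum over $x\in\Om$ yields \eqref{eq:condnuovaug}. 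The argument parallels \cite[Theorem~3]{CM2006}; no step is a serious obstacle, the only points needing a little care being the cancellation of the antisymmetric term (which rests on $M$ being Hermitian, since $\la,\mu$ are real) and the sharpness in the Cauchy–Schwarz/maximisation step that makes the chosen $\h,\om$ actually extremal.
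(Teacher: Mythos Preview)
Your argument is correct and follows the same overall strategy as the paper: invoke Theorem~\ref{th:algcond}, observe that the antisymmetric middle term drops out because the Lam\'e symbol $M(x,\xi)$ is real symmetric (the paper simply records $(\A^{kh})^{*}=\A^{hk}$), and then optimise the remaining scalar inequality over $\h$ and $\om$ at a fixed $x$.

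The one place where your route diverges from the paper is in that optimisation. The paper does not carry it out here; it writes the condition for real $\xi,\h,\om$ and then refers back to \cite[p.\,244--245]{CM2006}, merely substituting $\mu^{-1}(\la+\mu)$ for $(1-2\nu)^{-1}$ and $\LA_\infty$ for $(1-2/p)$ in that earlier calculation. Your treatment is self-contained and arguably cleaner: you first eliminate $\h$ by Cauchy--Schwarz in the $M$-inner product, reducing the question to the Kantorovich-type quantity $\sup_{|\om|=1}\lan M\om,\om\ran\,\lan M^{-1}\om,\om\ran=(\alpha+\beta)^2/(4\alpha\beta)$ with $\alpha=\la+2\mu$, $\beta=\mu$, and then verify the algebraic identity $4\mu(\la+2\mu)/(\la+3\mu)^2=1-((\la+\mu)/(\la+3\mu))^2$. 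This buys transparency (one sees exactly which $\h,\om$ are extremal) at no extra cost. A minor remark: fixing $\xi=e_1$ is harmless because for every unit $\xi$ the matrix $M(x,\xi)$ has the same eigenvalues $\mu$ and $\la+2\mu$, so no information is lost.
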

\begin{proof}
 In view of Theorem \ref{th:algcond},  condition \eqref{eq:algcond} holds. We have
\begin{gather*}
   \lan (\A^{hk}\xi_{h}\xi_{k})\h,\h\ran =
   \mu\,  |\xi|^{2}|\h|^{2}+(\la+\mu) \lan\xi,\h\ran^{2}, \cr
    \lan (\A^{hk}\xi_{h}\xi_{k})\om,\om\ran =
	\mu\, |\xi|^{2}+(\la+\mu) \lan\xi,\om\ran^{2}
\end{gather*}
    for any $\xi,\, \h,\, \om\in\R^{2}$, $|\om|=1$.  Since  $(\A^{kh})^{*}=\A^{hk}$,  condition \eqref{eq:algcond}
    can be written as
   \begin{equation}\label{eq:algcondel0}
 \mu\,  |\xi|^{2}|\h|^{2}+(\la+\mu) \lan\xi,\h\ran^{2} 
 -\LA^{2}_{\infty}[\mu\, |\xi|^{2}+(\la+\mu) \lan\xi,\om\ran^{2}]\lan \h, \om\ran^{2} \geq 0
\end{equation}
for almost any $x\in\Om$ and for any $\xi,\, \h,\, \om\in\R^{2}$, $|\om|=1$.  
Fix $x\in\Om$ and rewrite 
\eqref{eq:algcondel0} as
   \begin{equation}\label{eq:algcondel1}
  |\xi|^{2}|\h|^{2}+\mu^{-1}(\la+\mu) \lan\xi,\h\ran^{2} 
 -\LA^{2}_{\infty} [ |\xi|^{2}+\mu^{-1}(\la+\mu) \lan\xi,\om\ran^{2}]\lan \h, \om\ran^{2} \geq 0\, .
\end{equation}

Reasoning as in \cite[p.244--245]{CM2006} (just replace $(1-2\nu)^{-1}$ and $(1-2/p)$  in \cite{CM2006} by $\mu^{-1}(\la+\mu)$ and $\LA_{\infty}$, respectively)
we find that \eqref{eq:algcondel1}
implies
$$
\LA^{2}_{\infty} \leq   1 -\left(\frac{\la + \mu}{\la+3\mu}\right)^2.
$$
Taking the infimum of the right hand side, we get \eqref{eq:condnuovaug}.
\end{proof}

\begin{theorem}\label{th:4}
If the operator \eqref{opelast} is strict  $L^{\Phi}$-dissipative, then
    \begin{equation}\label{eq:condnuova}
\LA^{2}_{\infty} <   1 - \esssup_{x\in\Om}\left(\frac{\la + \mu}{\la+3\mu}\right)^2.
\end{equation}
\end{theorem}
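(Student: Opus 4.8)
The plan is to follow the same scheme as in the proof of Theorem \ref{th:3}, but now starting from Theorem \ref{th:algcondstr} instead of Theorem \ref{th:algcond}, so that we gain a quantitative gap encoded by the constant $\kappa>0$. First I would invoke Theorem \ref{th:algcondstr}: since the elasticity operator \eqref{opelast} is strict $L^{\Phi}$-dissipative, there is $\kappa>0$ such that
\begin{gather*}
   \Re  \Big( \lan (\A^{hk}(x)\xi_{h}\xi_{k})\h,\h\ran  -\LA^{2}_{\infty}\, \lan (\A^{hk}(x)\xi_{h}\xi_{k})\om,\om\ran (\Re \lan
    \h,\om\ran)^{2}\\
    +\LA_{\infty}\, (\lan (\A^{hk}(x)\xi_{h}\xi_{k})\om,\h\ran
	-\lan (\A^{hk}(x)\xi_{h}\xi_{k}) \h,\om\ran)
	\Re\lan \h,\om\ran \Big)
	\geq \kappa\, |\xi|^{2} |\h|^{2}
\end{gather*}
for almost every $x\in\Om$ and for any $\xi\in\R^{2}$, $\h,\om\in\C^{m}$, $|\om|=1$. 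Using the explicit form of the Lam\'e matrices $a^{hk}_{ij}= \la\del_{ih}\del_{jk} + \mu(\del_{ij}\del_{hk}+\del_{ik}\del_{hj})$, together with $(\A^{kh})^{*}=\A^{hk}$, the skew-symmetric term drops out and the inequality reduces, for real $\xi,\h,\om$ with $|\om|=1$, to
\begin{equation*}
 \mu\,  |\xi|^{2}|\h|^{2}+(\la+\mu) \lan\xi,\h\ran^{2}
 -\LA^{2}_{\infty}[\mu\, |\xi|^{2}+(\la+\mu) \lan\xi,\om\ran^{2}]\lan \h, \om\ran^{2} \geq \kappa\, |\xi|^{2}|\h|^{2},
\end{equation*}
for a.e.\ $x\in\Om$.

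Next I would fix $x$, divide by $\mu(x)>0$ (using \eqref{eq:lameinf}), and absorb $\kappa/\mu(x)$ on the left. Since $\mu$ is essentially bounded, $\kappa/\mu(x)\geq\kappa'>0$ uniformly, so the inequality becomes
\begin{equation*}
  (1-\kappa')|\xi|^{2}|\h|^{2}+\mu^{-1}(\la+\mu) \lan\xi,\h\ran^{2}
 -\LA^{2}_{\infty} [ |\xi|^{2}+\mu^{-1}(\la+\mu) \lan\xi,\om\ran^{2}]\lan \h, \om\ran^{2} \geq 0.
\end{equation*}
This is exactly the inequality \eqref{eq:algcondel1} with the leading coefficient $1$ replaced by $1-\kappa'$, or equivalently with $|\h|^{2}$ replaced by $(1-\kappa')|\h|^{2}$ in the first term. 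Running the optimization argument of \cite[p.\,244--245]{CM2006} — choosing $\h,\xi,\om$ to make the left-hand side as small as possible, with $(1-2\nu)^{-1}$ and $(1-2/p)$ replaced by $\mu^{-1}(\la+\mu)$ and $\LA_{\infty}$ respectively — now yields a strict gain: one obtains
\begin{equation*}
\LA^{2}_{\infty} \leq   1 - \kappa' -\left(\frac{\la + \mu}{\la+3\mu}\right)^2 < 1 -\left(\frac{\la + \mu}{\la+3\mu}\right)^2
\end{equation*}
for a.e.\ $x\in\Om$. Taking the essential supremum over $x$ of the right-hand side preserves the strict inequality (the bound $1-\kappa'-\esssup(\cdot)^2$ is a uniform constant strictly below $1-\esssup(\cdot)^2$), which gives \eqref{eq:condnuova}.

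The main obstacle I anticipate is making the passage from the inequality with the $(1-\kappa')$ coefficient to the claimed strict bound genuinely rigorous and uniform in $x$: one must check that the minimization over $\xi,\h,\om$ carried out in \cite{CM2006} depends continuously (and in a controlled way) on the leading coefficient, so that shrinking that coefficient from $1$ to $1-\kappa'$ shifts the resulting threshold by an amount bounded below independently of the pointwise value of $\mu^{-1}(\la+\mu)$. Since $\la,\mu\in L^{\infty}$ satisfy \eqref{eq:lameinf}, the ratio $\mu^{-1}(\la+\mu)$ ranges over a bounded set bounded away from the degenerate value, so the dependence is indeed uniform; but this is the step that needs care rather than the algebra, which is a verbatim repetition of the computation behind Theorem \ref{th:3}. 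Everything else — reducing to real vectors, eliminating the antisymmetric term, dividing by $\mu$ — is routine.
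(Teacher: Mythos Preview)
Your overall strategy — start from Theorem \ref{th:algcondstr}, exploit the extra $\kappa|\xi|^{2}|\h|^{2}$ on the right, and turn it into a strict gap in the final bound — is the right one and is what the paper does. The divergence is in how you spend the $\kappa$.

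After dividing by $\mu$ and moving $\kappa'|\xi|^{2}|\h|^{2}$ to the left, you do \emph{not} land on inequality \eqref{eq:algcondel1} with ``the leading coefficient $1$ replaced by $1-\kappa'$'': in \eqref{eq:algcondel1} the coefficient $1$ appears in two places, once as $|\xi|^{2}|\h|^{2}$ and once as the $|\xi|^{2}$ inside the bracket multiplied by $-\LA_{\infty}^{2}\langle\h,\om\rangle^{2}$. Your manipulation changes only the first of these, so the resulting quadratic form in $(\xi,\h,\om)$ is genuinely different from the one optimized in \cite{CM2006}. Hence the assertion that the optimization ``yields $\LA_{\infty}^{2}\leq 1-\kappa'-\big(\frac{\la+\mu}{\la+3\mu}\big)^{2}$'' is not a quotation of \cite{CM2006}; it would require redoing the minimization with the asymmetric form, and the clean additive shift by $\kappa'$ you write down is not what that minimization actually gives. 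You correctly flag this as the obstacle, but it is a real gap, not just a uniformity issue.

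The paper avoids this entirely by a small algebraic trick: for $0<h\leq\kappa$ one subtracts from the right-hand side the nonnegative quantity
\[
h\big(|\xi|^{2}|\h|^{2}-\LA_{\infty}^{2}|\xi|^{2}\langle\h,\om\rangle^{2}\big)\leq h|\xi|^{2}|\h|^{2}\leq \kappa|\xi|^{2}|\h|^{2},
\]
which replaces $\mu$ by $\mu-h$ in \emph{both} occurrences simultaneously. Setting $\widetilde\mu=\mu-h$, $\widetilde\la=\la+h$ one recovers exactly inequality \eqref{eq:algcondel0} for the pair $(\widetilde\la,\widetilde\mu)$, so the proof of Theorem \ref{th:3} applies verbatim and gives
\[
\LA_{\infty}^{2}\leq 1-\esssup_{x\in\Om}\Big(\frac{\la+\mu}{\la+3\mu-2h}\Big)^{2}<1-\esssup_{x\in\Om}\Big(\frac{\la+\mu}{\la+3\mu}\Big)^{2}.
\]
So the fix is not to perturb one coefficient and re-optimize, but to perturb the Lam\'e parameters themselves and invoke Theorem \ref{th:3} as a black box.
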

\begin{proof}
Theorem \ref{th:algcondstr} shows that
$$
 \mu\,  |\xi|^{2}|\h|^{2}+(\la+\mu) \lan\xi,\h\ran^{2} 
 -\LA^{2}_{\infty}[\mu\, |\xi|^{2}+(\la+\mu) \lan\xi,\om\ran^{2}]\lan \h, \om\ran^{2} \geq \kappa\  |\xi|^{2}|\h|^{2}
 $$
for almost any $x\in\Om$ and for any $\xi,\, \h,\, \om\in\R^{2}$, $|\om|=1$.  

This implies that, for any $0<h\leq \kappa$ we have
$$
( \mu-h)\,  |\xi|^{2}|\h|^{2}+(\la+\mu) \lan\xi,\h\ran^{2} 
 -\LA^{2}_{\infty}[(\mu-h)\, |\xi|^{2}+(\la+\mu) \lan\xi,\om\ran^{2}]\lan \h, \om\ran^{2} \geq 0
 $$
for almost any $x\in\Om$ and for any $\xi,\, \h,\, \om\in\R^{2}$, $|\om|=1$. 

Setting $\widetilde{\mu}=\mu - h$, $\widetilde{\la}=\la+h$, we can rewrite the last inequalty as
$$
 \widetilde{\mu}\,  |\xi|^{2}|\h|^{2}+(\widetilde{\la}+\widetilde{\mu}) \lan\xi,\h\ran^{2} 
 -\LA^{2}_{\infty}[\widetilde{\mu}\, |\xi|^{2}+(\widetilde{\la}+\widetilde{\mu}) \lan\xi,\om\ran^{2}]\lan \h, \om\ran^{2} \geq 0\, .
$$

The proof of Theorem \ref{th:3} shows that this implies
$$
\LA^{2}_{\infty} \leq   1 - \esssup_{x\in\Om}\left(\frac{\widetilde{\la} + \widetilde{\mu}}{\widetilde{\la}+3\widetilde{\mu}}\right)^2 \, ,
$$
i.e.
\begin{equation}\label{eq:tilde}
\LA^{2}_{\infty} \leq   1 - \esssup_{x\in\Om}\left(\frac{\la + \mu}{\la+3\mu-2h}\right)^2\, .
\end{equation}

If $h$ is sufficiently small, we have
$$
\esssup_{x\in\Om}\left(\frac{\la + \mu}{\la+3\mu}\right)^2 < \esssup_{x\in\Om}\left(\frac{\la + \mu}{\la+3\mu-2h}\right)^2
$$
and then \eqref{eq:tilde} implies \eqref{eq:condnuova}.
\end{proof}

The next Theorem provides a sufficient  conditions for  the strict $L^{\Phi}$-dissipativity  of  the elasticity operator.

\begin{theorem}\label{th:5}
Assume that the $BMO$ seminorm of the function $ \mu^2\, (\la + 3\mu)^{-1}$ is sufficiently small.
     If \eqref{eq:condnuova} holds, then
the elasticity operator \eqref{opelast} is strict  $L^{\Phi}$-dissipative.
\end{theorem}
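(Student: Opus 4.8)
The plan is to reduce the strict functional dissipativity to the inequality \eqref{eq:cond1str} of Lemma \ref{le:7}, and then to estimate the form appearing there. Since for the Lam\'e system $(\A^{kh})^*=\A^{hk}$, the first-order term in \eqref{eq:cond1str} drops out and one is left with showing that there exists $\kappa>0$ such that
\begin{equation*}
\Re\int_\Om\Big(\lan\A^{hk}\de_k v,\de_h v\ran-\LA^2(|v|)\,|v|^{-4}\lan\A^{hk}v,v\ran\Re\lan v,\de_k v\ran\Re\lan v,\de_h v\ran\Big)\,dx\geq\kappa\int_\Om|\nabla v|^2\,dx
\end{equation*}
for all $v\in[\Hspt^1(\Om)]^2$. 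Since $\LA^2(t)\leq\LA^2_\infty$, it suffices to prove the same inequality with $\LA^2(|v|)$ replaced by the constant $\LA^2_\infty$; this is now a quadratic form in $\nabla v$ and $v$ with coefficients involving $\la,\mu$ and the constant $\LA_\infty$. First I would write out $\lan\A^{hk}v,v\ran=\mu|v|^2+(\la+\mu)|\dive v\text{-type}|^2$ explicitly and reduce, exactly as in the pointwise computation of Theorem \ref{th:3}, the desired coercivity to an algebraic inequality of the form $P[\la,\mu,\LA_\infty](\nabla v,v)\geq\kappa|\nabla v|^2$ holding pointwise — which by the strict version \eqref{eq:condnuova} of the algebraic condition does hold, with a gap $\kappa_0>0$ that can be taken uniform over $\Om$ because of \eqref{eq:condnuova} being a strict inequality involving the essential supremum.

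The genuine difficulty — and the reason the $BMO$ hypothesis enters — is that the ``good'' pointwise inequality alone does \emph{not} give the integral inequality, because the quadratic form in $\nabla v$ involves the oscillating coefficient $c(x):=\mu^2(\la+3\mu)^{-1}$ (this is precisely the combination that shows up after completing the square in the elasticity form, as in \cite{CM2006}), and one cannot simply freeze it. The strategy is the classical one for divergence-form operators with $BMO$ coefficients: split $c(x)=\overline{c}+(c(x)-\overline{c})$ where $\overline{c}$ is a suitable average (on $\Om$, or handled via a partition of unity / Coifman–Rochberg–Weiss-type commutator estimate), treat the constant-coefficient part by the algebraic inequality just discussed, and bound the contribution of $c(x)-\overline{c}$ using that its $BMO$ seminorm is small. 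Concretely, the cross terms produced by $c(x)-\overline{c}$ can be estimated by $\|c\|_{BMO}$ times $\int_\Om|\nabla v|^2$ via the John–Nirenberg inequality together with a Poincaré–Sobolev / Fefferman–Stein type bound (using that $|v|^2$ and $|\nabla(|v|)|^2$ are controlled, as in Remark \ref{rem:lemmino} and the identity \eqref{eq:dircomp}); for this step one passes back from $v$ to $u=\Theta(|v|)v$ only if needed, but it is cleaner to stay with $v$ and use that $\nabla v\in L^2$. I would carry this out by: (i) fixing a reference value $\overline{c}$ for which the frozen algebraic inequality holds with gap $\kappa_0$; (ii) writing the form as (frozen form) $+$ (remainder driven by $c-\overline{c}$); (iii) estimating the remainder by $C\|c\|_{BMO}\int_\Om|\nabla v|^2$; (iv) choosing the $BMO$ seminorm smaller than $\kappa_0/(2C)$ so that the remainder is absorbed, leaving coercivity with constant $\kappa_0/2$.

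The main obstacle I anticipate is step (iii): producing a clean bound of the oscillatory remainder by the $BMO$ seminorm of $c$ times $\|\nabla v\|_{L^2}^2$, uniformly over all $v\in[\Hspt^1(\Om)]^2$ and over the domain $\Om$, which may be unbounded. The honest tool here is a commutator/multiplier estimate: integrals of the form $\int_\Om (c-\overline c)\,\nabla v\cdot\nabla v$ (and the analogous terms with one factor $v/|v|$) are handled by the known fact that pointwise multiplication by a $BMO$ function, composed with the relevant Riesz-type structure coming from the quadratic form, is bounded on the appropriate space with norm comparable to $\|c\|_{BMO}$; alternatively, a direct dyadic/Calderón–Zygmund decomposition argument as in Dindo\v{s}–Li–Pipher \cite{DLP} can be imported. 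One must also be a little careful that $c(x)=\mu^2(\la+3\mu)^{-1}$ is indeed the scalar quantity whose oscillation governs everything — this requires algebraically isolating it when one rewrites the elasticity form, exactly mirroring the reduction ``replace $(1-2\nu)^{-1}$ by $\mu^{-1}(\la+\mu)$'' used in Theorems \ref{th:3} and \ref{th:4} — and that after freezing, the residual pointwise inequality is the strict one \eqref{eq:condnuova}, whose strictness is what furnishes the absorbable gap $\kappa_0$. Once the multiplier estimate is in hand the rest is bookkeeping; I would present it in that order, flagging the $BMO$ multiplier bound as the single nontrivial analytic input.
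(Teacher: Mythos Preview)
Your overall architecture is right --- reduce to \eqref{eq:cond1str}, drop the first-order term since $(\A^{kh})^*=\A^{hk}$, replace $\LA^2(|v|)$ by $\LA_\infty^2$, and exploit the gap coming from the strict inequality \eqref{eq:condnuova} --- but the mechanism you propose for step~(iii) has a genuine gap.

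The issue is twofold. First, the integrand for the elasticity form contains the term $\mu(x)\sum_{k,j}\de_k v_j\,\de_j v_k$, which is \emph{not} pointwise comparable to the symbol; only after an integration by parts (valid when $\mu$ is constant) does it become $\mu\int(\dive v)^2$. So the strict pointwise symbol inequality of Theorem~\ref{th:algcondstr} does not by itself control the integral form, and the discrepancy is not ``$c(x)-\bar c$ times a generic quadratic in $\nabla v$''. Second, and relatedly, your proposed remainder estimate $\int_\Om(c-\bar c)\,|\nabla v|^2\leq C\|c\|_{BMO}\int_\Om|\nabla v|^2$ is simply false in general (John--Nirenberg does not give this), so the ``freeze and perturb'' scheme as written cannot close.

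What the paper actually does is different in a crucial structural way. Using the two-dimensional change of variables $X_1,X_2,Y_1,Y_2$ from \cite{CM2006} and introducing $\gamma=\mu(\la+\mu)/(\la+3\mu)$, one rewrites the full form as the sum of two quadratic forms in $(X_1,Y_1)$ and $(X_2,Y_2)$ that are \emph{pointwise} nonnegative with the \emph{variable} $\la(x),\mu(x)$ (this is where \eqref{eq:condnuova} is spent), plus a single cross term
\[
2\int_\Om(\gamma-\mu)(X_1Y_1+X_2Y_2)\,dx
=\int_\Om f(x)\Big(\sum_{k,j}\de_k v_j\,\de_j v_k-(\dive v)^2\Big)dx,\qquad f=-\,\frac{2\mu^2}{\la+3\mu}.
\]
The expression $\sum_{k,j}\de_k v_j\,\de_j v_k-(\dive v)^2$ is a \emph{null Lagrangian}: for smooth $v$ an integration by parts turns the integral into $\sum_{k,j}\int_\Om \de_k f\,(v_k\,\de_j v_j - v_j\,\de_j v_k)\,dx$, which involves only $\nabla f$. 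It is this specific commutator structure --- not any generic BMO multiplier bound --- that allows one to invoke the Maz'ya--Verbitsky inequality \cite[Lemma~4.9]{MV2006} and obtain $|\,\cdot\,|\leq C_0\|f\|_{BMO}\|\nabla v\|^2$. Your proposal should be rewritten around this null-Lagrangian/commutator mechanism; once that is in place, the absorption step~(iv) goes through exactly as you say.
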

\begin{proof}
We note that condition \eqref{eq:condnuova} implies $\LA^{2}_{\infty} <   1$. In view of  \eqref{Lamsup}  and Corollary \eqref{co:co1},
if we prove that there exists $h>0$ such that $E-h\Delta$ is $L^{\Phi}$-dissipative, the assertion follows.

Let $\delta$ be a real constant such that
\begin{equation}\label{eq:delta}
0<\delta <   1 - \esssup_{x\in\Om}\left(\frac{\la + \mu}{\la+3\mu}\right)^2 - \LA^{2}_{\infty}
\end{equation}
and, taking into account \eqref{eq:lameinf}, choose $\kappa$ such that
\begin{equation}\label{eq:defkappa}
0<\kappa < \frac{\delta}{2(1-\LA^{2}_{\infty})}\, \min \Big\{\essinf_{x\in\Om}\mu(x)\, ; \essinf_{x\in\Om} (\la(x)+ 2 \mu(x))\Big\} .
\end{equation}

Let $v \in [\Hspt^{1}(\Om)]^2$. For elasticity operator the left hand side of \eqref{eq:cond1str}
becomes
\begin{equation}\label{eq:tesielas}
\begin{gathered}
  \int_{\Om}\Big(
   ( \mu-\kappa)  |\nabla v|^2 + \la (\dive v)^{2}  
    + \mu \sum_{k,j}\de_{k} v_{j} \,  \de_{j} v_{k} \\
    -\LA^{2}(|v|)\left[ (\mu-\kappa) |\nabla|v||^{2} + (\la+\mu)|v|^{-2} | v_h \de_{h}|v||^2\right] \Big)dx \, .
\end{gathered}
\end{equation}

 Following the ideas used in \cite{CM2006}, given $v \in [\Hspt^{1}(\Om)]^2$,  we define
\begin{gather*}
X_{1}=|v|^{-1}(v_{1}\de_{1}|v|+v_{2}\de_{2}|v|),\quad 
X_{2}=|v|^{-1}(v_{2}\de_{1}|v|-v_{1}\de_{2}|v|) \cr
Y_{1}=|v|[\de_{1}(|v|^{-1}v_{1})+\de_{2}(|v|^{-1}v_{2})],
\quad
Y_{2}=|v|[\de_{1}(|v|^{-1}v_{2})-\de_{2}(|v|^{-1}v_{1})]
\end{gather*}
on the set $\Om_{0}=\{x\in\Om\ |\ v(x)\neq 0\}$.  We have $|v|^{-2} | v_h \de_{h}|v||^2= X_{1}^2$ and,
as it was proved  in  \cite[p.245]{CM2006}, 
\begin{gather*}
 |\nabla v|^2=X_{1}^2+ X_{2}^2 +Y_{1}^2+ Y_{2}^2; \quad  (\dive v)^{2}=(X_{1}+Y_{1})^{2};\\
|\nabla|v||^{2} = X_{1}^2+ X_{2}^2. 
\end{gather*}

We have also
\begin{equation}\label{eq:also}
 \sum_{k,j}\de_{k} v_{j} \de_{j}v_{k} = (\dive v)^{2} + 2(\de_{1}v_{2}\de_{2} v_{1} - \de_{1}v_{1}\de_{2} v_{2}) =
 (X_{1}+Y_{1})^{2} - 2(X_{1}Y_{1} + X_{2}Y_{2}).
\end{equation}

By means of these equalities,  the integral \eqref{eq:tesielas} can be written as
\begin{equation} \label{eq:tesielas2}
\begin{gathered}
    \int_{\Om_{0}}\Big( ( \la+2\mu-\kappa)[1-\LA^{2}(|v|)]X_{1}^{2} + 
    2\la X_{1}Y_{1} + ( \la+2\mu-\kappa)Y_{1}^{2}\Big)dx
    \\
    +
   \int_{\Om_{0}}\Big( (\mu-\kappa)[1-\LA^{2}(|v|)]X_{2}^{2} - 2 \mu X_{2}Y_{2} + (\mu-\kappa)Y_{2}^{2}\Big)dx\, .
       \end{gathered}
\end{equation}

Define
$$
\ga(x)=\mu(x) \frac{\la(x)+\mu(x)}{\la(x)+3\mu(x)}
$$
and rewrite \eqref{eq:tesielas2} as
\begin{equation} \label{eq:tesielas3}
\begin{gathered}
 \int_{\Om_{0}}\Big( (\mu-\kappa)[1-\LA^{2}(|v|)]X_{2}^{2} - 2 \ga X_{2}Y_{2} + (\mu-\kappa)Y_{2}^{2}\Big)dx  \\
 + 2  \int_{\Om_{0}} (\ga-\mu) (X_{1}Y_{1} +X_{2}Y_{2})\, dx \\
   +  \int_{\Om_{0}} \!  \Big( ( \la+2\mu-\kappa)[1-\LA^{2}(|v|)]X_{1}^{2} + 
    2(\la+\mu-\ga) X_{1}Y_{1} + ( \la+2\mu-\kappa)Y_{1}^{2}\Big)dx.
       \end{gathered}
\end{equation}

We claim that
\begin{equation}\label{eq:disgamma}
\ga^{2} =\mu^{2}\left( \frac{\la+\mu}{\la+3\mu}\right)^{2} < (\mu-\kappa)^{2} (1-\LA^{2}_{\infty}) \quad \text{a.e.}\, .
\end{equation}

Indeed \eqref{eq:defkappa} leads to $2\kappa(1-\LA^{2}_{\infty})<\delta \mu$ a.e., which implies 
$$
(2\mu \kappa - \kappa^{2}) (1-\LA^{2}_{\infty})  < \delta \mu^{2} \quad \text{ a.e.}\, .
$$
Since in view of \eqref{eq:delta}
$$
\mu^{2}\left(\frac{\la+\mu}{\la+3\mu}\right)^{2} < \mu^{2}(1-\LA^{2}_{\infty}-\delta) \quad  \text{ a.e.}\, ,
$$
 inequality \eqref{eq:disgamma} follows.
By similar arguments one can prove that
\begin{equation}\label{eq:disgamma2}
(\la+\mu-\ga)^{2} =(\la+2\mu)^{2}\left( \frac{\la+\mu}{\la+3\mu}\right)^{2} < 
(\la+2\mu-\kappa)^{2} (1-\LA^{2}_{\infty}) \quad \text{a.e.}\, .
\end{equation}

Inequalities \eqref{eq:disgamma} and \eqref{eq:disgamma2} show that
$$
(\mu-\kappa)[1-\LA^{2}(|v|)]X_{2}^{2} - 2 \ga X_{2}Y_{2} + (\mu-\kappa)Y_{2}^{2} \geq 0   \quad \text{a.e.}
$$
for any $X_{2}$, $Y_{2}$ and 
$$
( \la+2\mu-\kappa)[1-\LA^{2}(|v|)]X_{1}^{2} + 
    2(\la+\mu-\ga) X_{1}Y_{1} + ( \la+2\mu-\kappa)Y_{1}^{2} \geq 0  \quad \text{a.e.}
$$
for any $X_{1}$, $Y_{1}$. Therefore, keeping in mind \eqref{eq:tesielas3}, we can write
\begin{equation}\label{eq:tesielas4}
\begin{gathered}
  \int_{\Om}\Big(
   ( \mu-\kappa)  |\nabla v|^2 + \la (\dive v)^{2}  
    + \mu \sum_{k,j}\de_{k} v_{j} \,  \de_{j} v_{k}  \\
    -\LA^{2}(|v|)\left[ (\mu-\kappa) |\nabla|v||^{2} + (\la+\mu)|v|^{-2} | v_h \de_{h}|v||^2\right] \Big)dx
       \\
    \geq 2  \int_{\Om_{0}} (\ga-\mu) (X_{1}Y_{1} +X_{2}Y_{2})\, dx\, .
\end{gathered}
\end{equation}

Since (see \eqref{eq:also})
$$
 2(X_{1}Y_{1} + X_{2}Y_{2}) = (\dive v)^{2} - \sum_{k,j}\de_{k} v_{j} \de_{j}v_{k} 
 $$
 and
 $$
 \ga-\mu = -2 \frac{\mu^{2}}{\la + 3\mu}\, ,
 $$
 the last integral in \eqref{eq:tesielas4} can be written as
 $$
 2 \int_{\Om}\frac{\mu^{2}}{\la + 3\mu}\, \Big( \sum_{k,j}\de_{k} v_{j} \de_{j}v_{k} - (\dive v)^{2}\Big)dx\, .
 $$
 
 If $v\in [\Cspt^{\infty}(\Om)]^{2}$  and we consider $\mu^{2}/(\la + 3\mu)$ as a distribution $f$, we have
\begin{gather*}
\int_{\Om}f  \de_{k} v_{j} \de_{j}v_{k} \, dx = - \int_{\Om} \de_{k}f\, v_{j} \de_{j}v_{k}\, dx  - \int_{\Om}f\, v_{j} \de_{kj}v_{k} \, dx =\\
   - \int_{\Om} \de_{k}f\, v_{j} \de_{j}v_{k} \, dx+ \int_{\Om}\de_{j}f\, v_{j} \de_{k}v_{k} \, dx+ \int_{\Om}f\, \de_{j}v_{j} \de_{k}v_{k}\, dx
\end{gather*}
and then
$$
\int_{\Om}f   \Big(\sum_{k,j} \de_{k} v_{j} \de_{j}v_{k}-  (\dive v)^{2}\Big)dx = 
\sum_{k,j} \int_{\Om} \de_{k}f \, ( v_{k} \de_{j}v_{j} - v_{j} \de_{j}v_{k} )dx\, .
$$

Thanks to a result by Maz'ya and Verbitsky \cite[Lemma 4.9, p.1315]{MV2006} (see also \cite{MV2020}) we have the
commutator inequality
$$
\left| \sum_{k,j} \int_{\Om} \de_{k}f \, ( v_{k} \de_{j}v_{j} - v_{j} \de_{j}v_{k} )dx \right| \leq 
C_{0}\, \Vert f\Vert_{BMO}\, \Vert \nabla v_{1}\Vert \, \Vert \nabla v_{2}\Vert\, .
$$

Therefore
\begin{gather*}
\left| 2 \int_{\Om}\frac{\mu^{2}}{\la + 3\mu}\, \Big( \sum_{k,j}\de_{k} v_{j} \de_{j}v_{k} - (\dive v)^{2}\Big)dx \right|   \\  \leq
C_{0}\, \Vert \mu^{2} (\la + 3\mu)^{-1} \Vert_{BMO}\, \Vert \nabla v\Vert^{2}
\end{gather*}
for any $v\in [\Cspt^{\infty}(\Om)]^{2}$. By density, the same inequality holds for any $v \in [\Hspt^{1}(\Om)]^2$.

From \eqref{eq:tesielas4} it follows
\begin{equation}\label{eq:ultimadis}
\begin{gathered}
  \int_{\Om}\Big(
   \Big( \mu-\frac{\kappa}{2}\Big)  |\nabla v|^2 + \la (\dive v)^{2}  
    + \mu \sum_{k,j}\de_{k} v_{j} \,  \de_{j} v_{k}  \\  
    -\LA^{2}(|v|)\left[  \Big( \mu-\frac{\kappa}{2}\Big) |\nabla|v||^{2} + (\la+\mu)|v|^{-2} | v_h \de_{h}|v||^2\right] \Big)dx
      \\  \geq
     \frac{\kappa}{2} \int_{\Om}\Big( |\nabla v|^2 - \LA^{2}(|v|) \,  |\nabla|v||^{2}\Big)dx \\
       - C_{0}\,  \Vert \mu^{2} (\la + 3\mu)^{-1} \Vert_{BMO} \int_{\Om}  |\nabla v|^2 dx\, .
\end{gathered}
\end{equation}

If
\begin{equation}\label{eq:BMOineq}
 \Vert \mu^{2} (\la + 3\mu)^{-1} \Vert_{BMO} \leq \frac{\kappa}{2\, C_{0}}\, (1-\LA_{\infty}^{2})
\end{equation}
we have (see also \eqref{eq:nablav2})
\begin{gather*}
C_{0}\,  \Vert \mu^{2} (\la + 3\mu)^{-1} \Vert_{BMO} \int_{\Om}  |\nabla v|^2 dx \leq
\frac{\kappa}{2}\, (1-\LA_{\infty}^{2})\int_{\Om}  |\nabla v|^2 dx  \\
\leq \frac{\kappa}{2} \int_{\Om}\Big(|\nabla v|^2 - \LA^{2}(|v|) \,  |\nabla|v||^{2}\Big)dx
\end{gather*}
and the right hand side of \eqref{eq:ultimadis} is nonnegative. This means
that the operator $E-(\kappa/2)\Delta$ is $L^{\Phi}$-dissipative, which proves the theorem.
\end{proof}

Combining theorems \ref{th:4} and \ref{th:5}, we have immediately the following necessary and sufficient condition.
\begin{theorem}\label{th:6}
Assume that the $BMO$ seminorm of the function $ \mu^2\, (\la + 3\mu)^{-1}$ is sufficiently small.
The elasticity operator \eqref{opelast} is strict  $L^{\Phi}$-dissipative if and only if
the stric inequality \eqref{eq:condnuova} holds.
\end{theorem}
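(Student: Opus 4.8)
The plan is to obtain Theorem~\ref{th:6} simply as the conjunction of the two preceding theorems, once one checks that their hypotheses line up with the standing assumption. First I would record the ``only if'' direction: if the elasticity operator \eqref{opelast} is strict $L^{\Phi}$-dissipative, then Theorem~\ref{th:4} applies verbatim and gives the strict inequality \eqref{eq:condnuova}. I would emphasise that this implication is unconditional --- it does not use the smallness of the $BMO$ seminorm of $\mu^{2}(\la+3\mu)^{-1}$ at all --- since it rests only on the algebraic necessary condition of Theorem~\ref{th:algcondstr} and on the pointwise perturbation $\mu\rightsquigarrow\mu-h$, $\la\rightsquigarrow\la+h$ carried out in the proof of Theorem~\ref{th:4}, followed by letting $h\to 0^{+}$.

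Second, for the ``if'' direction, assume that the strict inequality \eqref{eq:condnuova} holds. Here I would invoke Theorem~\ref{th:5}, whose hypothesis is precisely that the $BMO$ seminorm of $\mu^{2}(\la+3\mu)^{-1}$ be sufficiently small, which is exactly the standing assumption of Theorem~\ref{th:6}. Theorem~\ref{th:5} then yields that \eqref{opelast} is strict $L^{\Phi}$-dissipative. Combining the two implications gives the asserted equivalence.

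I do not expect a genuine obstacle in assembling the statement, because all the analytic work is already in place: the reduction of strict $L^{\Phi}$-dissipativity to the form inequality \eqref{eq:cond1str} via Lemma~\ref{le:7}, the reduction through Corollary~\ref{co:co1} to the $L^{\Phi}$-dissipativity of $E-h\Delta$, the decomposition into the variables $X_{1},X_{2},Y_{1},Y_{2}$, and the commutator estimate of Maz'ya--Verbitsky are all contained in the proofs of Theorems~\ref{th:4} and~\ref{th:5}. The one point I would flag explicitly in the write-up is that the two quantified hypotheses enter asymmetrically: the strict inequality \eqref{eq:condnuova} is both necessary and sufficient, whereas the $BMO$ smallness is used only in the sufficiency direction and should be read as a qualitative condition that can be made quantitative through \eqref{eq:BMOineq} --- the admissible threshold depending, via \eqref{eq:defkappa} and \eqref{eq:delta}, on the size of the gap between $\LA^{2}_{\infty}$ and $1-\esssup_{x\in\Om}\bigl(\tfrac{\la+\mu}{\la+3\mu}\bigr)^{2}$.
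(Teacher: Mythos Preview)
Your proposal is correct and follows exactly the paper's own approach: the paper states Theorem~\ref{th:6} as an immediate consequence of combining Theorems~\ref{th:4} and~\ref{th:5}, with no additional argument. Your extra commentary on the asymmetric roles of the two hypotheses is accurate and even more informative than the one-line proof in the paper.
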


\begin{remark}
If $\la$ and $\mu$ are constant,  the $BMO$ seminorm of the function 
$ \mu^2\, (\la + 3\mu)^{-1}$ is zero and  then the strict inequality \eqref{eq:condnuova} is necessary and
sufficient for the strict  $L^{\Phi}$-dissipativity of elasticity  operator \eqref{opelast}.
\end{remark}

\begin{remark}\label{rem:2}
Condition \ref{item6} on the function $\vf$ is used only in the necessity part of
Theorem \ref{th:6}. Therefore, if  \ref{item6} it is not satisfied, 
the sufficiency part of Theorem \ref{th:6} holds, where $\LA^{2}_{\infty}=\sup_{t>0} \LA^{2}(t)$.
\end{remark}

\section{Some applications}\label{sec:applic}

In this section we show two applications of the theory we have developed.
In particular we obtain regularity results for energy solutions of 
Dirichlet problem for Lam\'e system. In these results the energy solution 
which \textit{a priori} belongs to the Sobolev space $H^1(\Om)$, 
actually satisfy  higher integrability conditions, provided that the 
right hand sides are better than $H^{-1}(\Om)$, the dual space of the
energy space (see, for instance, \cite[Sect. 1.1.15]{mazyasobolev}).

\subsection{The $N$-dimensional case ($N\geq 3$)}

We prove the following result wich concerns the $N$-dimensional Lam\'e system ($N\geq 3$)
with constant coefficients. As usually these constants are supposed to
 satisfy the inequalities: $\mu>0, \la+2\mu>0$.

First we give the following sufficient condition for the strict $L^\Phi$-dissipativity of
the Lam\'e  operator \eqref{opelastconst}.

\begin{theorem}\label{th:7}
     Let $\Om$ be a domain in $\R^N$ and suppose that
\begin{equation}\label{eq:obvious}
\LA_{\infty}^2<
\begin{cases}
\mu/(\la+2\mu), & \text{if  $\la+\mu>0$} ;   \\  
(\la+2\mu)/\mu, & \text{if  $\la+\mu \leq 0$} .
\end{cases}
\end{equation}
Then the Lam\'e operator \eqref{opelastconst} is strictly $L^\Phi$-dissipative.
\end{theorem}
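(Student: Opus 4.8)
The plan is to reduce, via Corollary \ref{co:co1}, to the construction of a single $\kappa>0$ for which $E-\kappa\Delta$ is $L^{\Phi}$-dissipative, and then to verify this through the bilinear inequality \eqref{eq:cond1} of Lemma \ref{le:lemma1}. First I would note that in both alternatives of \eqref{eq:obvious} one has $\LA_{\infty}^{2}:=\sup_{t>0}\LA^{2}(t)<1$: indeed $\la+2\mu>0$ in either case, $\mu/(\la+2\mu)<1$ precisely when $\la+\mu>0$, while $(\la+2\mu)/\mu\leq 1$ when $\la+\mu\leq 0$. Hence assumption \eqref{eq:supL} of Corollary \ref{co:co1} holds (here, following Remark \ref{rem:2}, $\LA_{\infty}^{2}$ is understood as $\sup_{t>0}\LA^{2}(t)$, condition \ref{item6} playing no role in the sufficiency direction), and it is enough to produce $\kappa>0$ with $E-\kappa\Delta$ $L^{\Phi}$-dissipative.

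Next I would write out condition \eqref{eq:cond1} for the operator $E-\kappa\Delta$, whose coefficient matrices have entries $b^{hk}_{ij}=\la\,\del_{ih}\del_{jk}+(\mu-\kappa)\,\del_{ij}\del_{hk}+\mu\,\del_{ik}\del_{hj}$. Since these are real and satisfy $b^{hk}_{ij}=b^{kh}_{ji}$, the antisymmetric middle term of \eqref{eq:cond1} vanishes identically, and a direct computation (using $\Re\lan v,\de_{k}v\ran=|v|\de_{k}|v|$) turns the condition into
$$
\int_{\Om}\Big[(\mu-\kappa)|\nabla v|^{2}+\la|\dive v|^{2}+\mu\,\Re\sum_{h,i}\de_{i}v_{h}\,\overline{\de_{h}v_{i}}-\LA^{2}(|v|)\big((\la+\mu)|v|^{-2}|v_{h}\de_{h}|v||^{2}+(\mu-\kappa)|\nabla|v||^{2}\big)\Big]dx\geq 0 .
$$
Here constancy of $\la,\mu$ is used decisively: for $v\in[\Cspt^{\infty}(\Om)]^{N}$ two integrations by parts give $\int_{\Om}\Re\sum_{h,i}\de_{i}v_{h}\overline{\de_{h}v_{i}}\,dx=\int_{\Om}|\dive v|^{2}dx$, and since both sides are continuous quadratic forms in $v$ on $H^{1}$, the same holds for all $v\in[\Hspt^{1}(\Om)]^{N}$ by density. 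Thus it suffices to establish, for some $\kappa>0$,
$$
\int_{\Om}\Big[(\mu-\kappa)\big(|\nabla v|^{2}-\LA^{2}(|v|)|\nabla|v||^{2}\big)+(\la+\mu)\big(|\dive v|^{2}-\LA^{2}(|v|)|v|^{-2}|v_{h}\de_{h}|v||^{2}\big)\Big]dx\geq 0
$$
for all $v\in[\Hspt^{1}(\Om)]^{N}$.

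Then I would close the two alternatives of \eqref{eq:obvious} separately, with $\kappa>0$ small. If $\la+\mu>0$, I discard the nonnegative term $(\la+\mu)|\dive v|^{2}$ and bound pointwise on $\{v\neq0\}$, using Cauchy--Schwarz, \eqref{eq:nablav2} and $\LA^{2}(|v|)\leq\LA_{\infty}^{2}$, the quantities $|v|^{-2}|v_{h}\de_{h}|v||^{2}\leq|\nabla|v||^{2}\leq|\nabla v|^{2}$; the integrand is then $\geq\big[(\mu-\kappa)(1-\LA_{\infty}^{2})-\LA_{\infty}^{2}(\la+\mu)\big]|\nabla v|^{2}$, whose bracket tends to $\mu-(\la+2\mu)\LA_{\infty}^{2}>0$ as $\kappa\to0^{+}$. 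If $\la+\mu\leq 0$, I instead discard the now nonnegative term $-\LA^{2}(|v|)(\la+\mu)|v|^{-2}|v_{h}\de_{h}|v||^{2}$ and use the identity above together with the pointwise bound $|\Re\sum_{h,i}\de_{i}v_{h}\overline{\de_{h}v_{i}}|\leq|\nabla v|^{2}$ to get $\int_{\Om}|\dive v|^{2}dx\leq\int_{\Om}|\nabla v|^{2}dx$; the left-hand side is then $\geq\int_{\Om}\big[(\mu-\kappa)(1-\LA_{\infty}^{2})-|\la+\mu|\big]|\nabla v|^{2}dx$, and the bracket tends to $\mu(1-\LA_{\infty}^{2})-|\la+\mu|=(\la+2\mu)-\mu\LA_{\infty}^{2}>0$ as $\kappa\to0^{+}$. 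In either case a sufficiently small $\kappa>0$ makes the integral nonnegative, so $E-\kappa\Delta$ is $L^{\Phi}$-dissipative and the theorem follows from Corollary \ref{co:co1}.

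The genuinely delicate point is the integration-by-parts step and the attendant passage from $[\Cspt^{\infty}(\Om)]^{N}$ to $[\Hspt^{1}(\Om)]^{N}$: one has to check that each functional occurring in the reduced inequality is continuous on $H^{1}$, which is routine and parallels the density argument that closes the proof of Lemma \ref{le:lemma1}. The other, easily overlooked, subtlety is that in the case $\la+\mu\leq0$ one must control $\int_{\Om}|\dive v|^{2}$ by $\int_{\Om}|\nabla v|^{2}$ through the identity $\int_{\Om}|\dive v|^{2}=\int_{\Om}\Re\sum_{h,i}\de_{i}v_{h}\overline{\de_{h}v_{i}}$, rather than through the crude pointwise bound $|\dive v|^{2}\leq N|\nabla v|^{2}$, which would corrupt the constant and introduce a spurious dependence on $N$.
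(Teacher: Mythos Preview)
Your proof is correct and follows essentially the same strategy as the paper: verify $\LA_{\infty}^{2}<1$, invoke Corollary~\ref{co:co1}, and show that $E-\kappa\Delta$ is $L^{\Phi}$-dissipative for some small $\kappa>0$. The paper compresses the last step into the sentence ``By repeating the arguments we have used in \cite{CM2013} for the $L^{p}$-dissipativity, we find that the operator $E'$ is $L^{\Phi}$-dissipative'', whereas you carry out that verification explicitly via Lemma~\ref{le:lemma1}, the integration-by-parts identity $\int_{\Om}\Re\sum_{h,i}\de_{i}v_{h}\overline{\de_{h}v_{i}}\,dx=\int_{\Om}|\dive v|^{2}dx$, and the case split on the sign of $\la+\mu$; this is precisely the content of the cited argument, so no genuine methodological difference arises.
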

\begin{proof}
Choose $\kappa$ such that $0<\kappa < \min\{\mu,\, \la+2\mu\}$ and
$$
\LA_{\infty}^2<
\begin{cases}
(\mu-\kappa)/(\la+2\mu-\kappa), & \text{if  $\la+\mu>0$} ;   \\  
(\la+2\mu-\kappa)/(\mu-\kappa), & \text{if  $\la+\mu \leq 0$} .
\end{cases}
$$

Setting 
$\la'=\la+\kappa$, $\mu'=\mu-\kappa$,
we can write
$$
\LA_{\infty}^2<
\begin{cases}
\mu'/(\la'+2\mu'), & \text{if  $\la'+\mu'>0$} ;   \\  
(\la'+2\mu')/\mu', & \text{if  $\la'+\mu' \leq 0$} .
\end{cases}
$$

Note that $\mu'>0$, $\la'+2\mu' > 0$. 
By repeating the arguments we have used in
in \cite[p.126]{CM2013} for the $L^p$-dissipativity,  we find that the operator
$$
E'u = \mu'\D u + (\la'+\mu')\nabla \dive u\, ,
$$
is $L^\Phi$-dissipative.
Since the  $L^\Phi$-dissipative operator $E'$ coincide with $E-\kappa\D$, 
 Corollary \ref{co:co1} shows that $E$ is strictly $L^\Phi$-dissipative.
\end{proof}

\begin{remark}
Obviously condition \eqref{eq:obvious} can be reformulated in terms of the Poisson ratio $\nu=
\la/(2(\la+\mu))$ as
$$
\LA_{\infty}^2 \leq \begin{cases}(1-2 \nu)/(2(1-\nu)) & \text { if } \nu<1 / 2 \\ 2(1-\nu)/(1-2 \nu) & \text { if } \nu>1.\end{cases}
$$
\end{remark}

\begin{theorem}\label{th:8}
    Let $\Om\subset \R^N$ be a bounded domain and let $p\geq 2$ such that 
    \begin{equation}\label{eq:pN}
\left(1 - \frac{2}{p}\right)^2 <
\begin{cases}
\mu/(\la+2\mu), & \text{if  $\la+\mu>0$} ;   \\  
(\la+2\mu)/\mu, & \text{if  $\la+\mu \leq 0$} .
\end{cases}
\end{equation}
Consider the Dirichlet problem for the Lam\'e operator  \eqref{opelastconst}
\begin{equation}\label{eq:Dir}
\begin{cases}
u\in \Hspt^1(\Om)\\
Eu=\Dive F  & \text{in $\ \Om$},  \\ 
u=0 & \text{on $\partial\Om$},
\end{cases}
\end{equation}
where $F=\{F_{ij}\}$ is a given matrix in $[L^{\frac{Np}{N+p-2}}(\Om)]^{N^2}$
and $\Dive F$ denotes the vector whose $j$-th component is $\de_{i}F_{ij}$. 
Then the solution $u$ satisfies the inequality
\begin{equation}\label{eq:exdis1}
\int_{\Om}|\nabla u|^2\, (|u|^{p-2} + 1)\, dx < +\infty\, .
\end{equation}
In particular,
\begin{equation}\label{eq:exdis2}
\int_{\Om}|\nabla u|^2\, |u|^{p-2} \, dx \leq C 
\left(\int_{\Om}|F|^{\frac{Np}{N+p-2}}dx\right)^{\frac{N+p-2}{N}} 
\end{equation}
where the constant $C$ does not depend on $u$.
Moreover the solution $u$ belongs to $[L^{\frac{Np}{N-2}}(\Om)]^N$.
\end{theorem}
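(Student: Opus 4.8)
The plan is to derive the higher-integrability estimate \eqref{eq:exdis1}--\eqref{eq:exdis2} from the strict $L^\Phi$-dissipativity of $E$ applied to the specific choice $\vf(t)=t^{p-2}$, together with a standard Stampacchia-type truncation/approximation argument. First I would observe that \eqref{eq:pN} is exactly condition \eqref{eq:obvious} with $\LA_\infty^2=(1-2/p)^2$ (this is the value of $\LA_\infty$ when $\vf(t)=t^{p-2}$, as remarked after Corollary \ref{co:co1}), so Theorem \ref{th:7} gives a $\kappa>0$ with
\begin{equation*}
\Re\int_\Om \lan E\text{-form}(u),\de_h(|u|^{p-2}u)\ran\, dx \geq \kappa\int_\Om |\nabla(|u|^{(p-2)/2}u)|^2\, dx
\end{equation*}
for all admissible $u$; and by the Lemma following \eqref{eq:defdiss0str} (with $\vf(t)=t^{p-2}$ bounded ratio $t\vf'/\vf=p-2$, cf. Remark \ref{rem:lemmino}) this controls $\int_\Om |u|^{p-2}|\nabla u|^2\, dx$ from below by a constant times the left side. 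Since $E$ also satisfies $\Re\int\lan\A^{hk}\de_k u,\de_h u\ran\, dx\geq c\|\nabla u\|^2$ (Korn/ellipticity for constant-coefficient Lam\'e with $\mu>0$, $\la+2\mu>0$, on $\Hspt^1$), adding a multiple of this to the dissipativity inequality produces the combined lower bound matching the weight $|u|^{p-2}+1$ appearing in \eqref{eq:exdis1}.

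Next I would set up the approximation. The energy solution $u\in\Hspt^1(\Om)$ exists and is unique by Lax--Milgram. One cannot directly test the weak formulation with $\vf(|u|)u=|u|^{p-2}u$ because that function is not known a priori to lie in $\Hspt^1(\Om)$. So I would use the truncated test functions $v_n=\vf_n(|u|)u$ where $\vf_n(t)=\min(t,n)^{p-2}$ (or a smooth variant), which do belong to $\Hspt^1(\Om)$ since $u\in\Hspt^1$ and $\vf_n$ is bounded Lipschitz. Testing $Eu=\Dive F$ against $v_n$ gives
\begin{equation*}
\Re\int_\Om \lan (\A^{hk}\de_k u),\de_h v_n\ran\, dx = -\Re\int_\Om \lan F_{ij},\de_i (v_n)_j\ran\, dx,
\end{equation*}
and the truncated analogue of the dissipativity bound (which holds for each fixed $n$ since the relevant inequalities are pointwise in the integrand, cf. the structure of Lemma \ref{le:lemma1}) yields
\begin{equation*}
\kappa'\int_\Om \big(\min(|u|,n)^{p-2}+1\big)|\nabla u|^2\, dx \leq \Big|\int_\Om \lan F,\nabla v_n\ran\, dx\Big|.
\end{equation*}

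The right side is then estimated by H\"older: $|\nabla v_n|\lesssim \min(|u|,n)^{p-2}|\nabla u|$, so by Cauchy--Schwarz the right side is bounded by $\|F\|_{L^{q}}\cdot\big(\int \min(|u|,n)^{p-2}|\nabla u|^2\big)^{1/2}\cdot(\text{something})$ — more precisely I would write $\int\lan F,\nabla v_n\ran$ as a product of $F\cdot\min(|u|,n)^{(p-2)/2}$ against $\min(|u|,n)^{(p-2)/2}|\nabla u|$, absorb the latter factor into the left side via Young's inequality with a small parameter, and bound the former by the Sobolev embedding $\|u\|_{L^{Np/(N-2)}}$ applied to $\min(|u|,n)^{(p-2)/2}u\in \Hspt^1$, whose gradient is again controlled by $\big(\int\min(|u|,n)^{p-2}|\nabla u|^2\big)^{1/2}$. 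This self-improving bootstrap closes to give $\int_\Om \min(|u|,n)^{p-2}|\nabla u|^2\, dx\leq C\|F\|_{L^{q}}^{2}$ with $q=Np/(N+p-2)$ and $C$ independent of $n$, and then $n\to\infty$ with Fatou gives \eqref{eq:exdis2}; the term $\int|\nabla u|^2<\infty$ is automatic since $u\in\Hspt^1$, giving \eqref{eq:exdis1}. Finally, $u\in[L^{Np/(N-2)}(\Om)]^N$ follows because $|u|^{(p-2)/2}u$, having finite Dirichlet integral by \eqref{eq:exdis2}, lies in $L^{2N/(N-2)}$ by Sobolev, i.e. $|u|^{p/2}\in L^{2N/(N-2)}$, which is exactly $u\in L^{Np/(N-2)}$. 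The main obstacle I anticipate is the careful bookkeeping of the truncated test function $v_n$: justifying that $v_n\in\Hspt^1(\Om)$, that the dissipativity-type pointwise inequality survives truncation with a $\kappa$-independent-of-$n$ constant, and that the Young's-inequality absorption step is legitimate (needing the left-hand coercive term and the right-hand Sobolev term to involve the \emph{same} truncated quantity). Everything else is routine.
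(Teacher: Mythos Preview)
Your outline is correct and matches the paper's proof almost step for step: truncate $\varphi(t)=t^{p-2}$, apply the strict dissipativity inequality to the truncated test function, estimate the right-hand side by Cauchy--Schwarz/H\"older together with the Sobolev embedding for $\sqrt{\varphi_k(|u|)}\,u$, close the self-referential inequality, and pass to the limit with Fatou. The only noteworthy difference is in how the truncated dissipativity is justified. The paper does not argue that ``the relevant inequalities are pointwise in the integrand''; instead it builds a $C^1$ cutoff $\varphi_k$ (equal to $t^{p-2}$ for $t<k-1$, constant for $t>k$, with a quadratic bridge in between), checks that $\varphi_k$ satisfies the standing hypotheses \ref{item1}--\ref{item5}, computes that $t\varphi_k'(t)/\varphi_k(t)$ decreases from $p-2$ to $0$ so that $\Lambda_k^2\le(1-2/p)^2$, and then reapplies Theorem~\ref{th:7} to each $\varphi_k$, obtaining strict $L^{\Phi_k}$-dissipativity with the \emph{same} constant $\kappa$ for all $k$. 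Your choice $\min(t,n)^{p-2}$ is not $C^1$, so to plug into the paper's machinery you would indeed need the ``smooth variant'' you mention, and the clean way to get a $\kappa$ independent of $n$ is precisely this $\Lambda_k^2\le(1-2/p)^2$ observation rather than an appeal to Lemma~\ref{le:lemma1}.
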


\begin{proof}
Saying that  $u$ is solution of problem \eqref{eq:Dir}  means
\begin{equation}\label{eq:weak}
\int_{\Om} [\mu\, \lan \nabla u, \nabla v\ran \,+\, (\la+\mu) (\dive u)(\dive v)]\, dx =
\int_{\Om} F_{ij}\, \de_{i}v_{j}\, dx
\end{equation}
for any $v\in [\Hspt^1(\Om)]^N$.
The existence and the uniqueness of the solution $u\in \Hspt^1(\Om)$ is 
 guaranteed by classic results, because  $F\in [L^{\frac{Np}{N+p-2}}(\Om)]^{N^2}
\subset \left[L^{2}(\Om)\right]^{N^2}$,  the number
$Np/(N+p-2)$ being greater than or equal to $2$.

Let now $k>0$ and define
$$
\vf_{k}(t) = 
\begin{cases}
t^{p-2},  & \text{if $0\leq t <k-1$; }  \\ 
[\ro_k(t)]^{p-2},  & \text{if $k-1\leq t \leq k$; }  \\ 
(k-1/2)^{p-2},  & \text{if $t > k$, }
\end{cases}
$$
where $\ro_k(t)= -(t-k+1)^2/2 + t$. We note that
$$
\ro_k(k-1)=k-1,\ \ro_k(k)=k-1/2,\ \ro'_k(k-1)=1,\ \ro'_k(k)=0,
$$
and then $\vf_{k}\in C^1(0,+\infty)$. It is easy to check that $\vf_{k}$ satisfies
also conditions \ref{item2}- \ref{item5}. 
Let us consider now the functional $\Phi_k$-dissipativity related to $\vf_{k}$ and
$$
\LA_{k}\left(s\sqrt{\vf_{k}(s)}\right)= - \frac{s\, \vf_{k}'(s)}{s\,\vf_{k}'(s)+2\, 
\vf_{k}(s)}\, .
$$
Since 
$$
\frac{t\ro'_k(t)}{\ro_k(t)}= \frac{2t(k-t)}{-(t-k+1)^2+2t}
$$
 decreases from $1$ to $0$ in $[k-1,k]$, the function
 \begin{equation}\label{eq:vfk}
\frac{t \vf'_{k}(t)}{\vf_{k}(t)}=
\begin{cases}
p-2,  & \text{if $0\leq t <k-1$; }  \\ 
(p-2)t\ro'_k(t)/ \ro_k(t),  & \text{if $k-1\leq t \leq k$; }  \\ 
 0,  & \text{if $t > k$, }
 \end{cases}
\end{equation}
decreases from $(p-2)$ to $0$ in $(0,+\infty)$.
%
 This implies that 
 $\LA^{2}_{k}\left(s\sqrt{\vf_{k}(s)}\right)$ is decreasing in $(0,+\infty)$ and then
$\LA_{k}^2 \leq (1-2/p)^2$.  Inequality \eqref{eq:pN} and
Theorem \ref{th:7} show that $E$ is $L^{\Phi_k}$-dissipative with the same constant
$\kappa$, and then
\begin{gather*}
\int_{\Om}[\mu\, \lan \nabla v, \nabla(\vf_{k}(|v|)\,v)\ran \,+\, (\la+\mu) (\dive v)(\dive(\vf_{k}(|v|)\, v))]\, dx \\
 \geq 
\kappa \int_{\Om} |\nabla(\sqrt{\vf_{k}(|v|)}\, v)|^{2} dx
\end{gather*}
for any $v\in [\Hspt^1(\Om)]^N$. Since $\vf_{k}(|u|)\,u$ belongs to $[\Hspt^1(\Om)]^N$, 
this inequality and \eqref{eq:weak} lead to
\begin{equation}\label{eq:disug1}
  \kappa \int_{\Om}| \nabla(\sqrt{\vf_{k}(|u|)}\, u)|^2 dx \leq
\int_{\Om} |F_{ij}|\, |\de_{i}(\vf_{k}(|u|)\, u_{j})|\, dx.
\end{equation}

We can write
\begin{gather*}
\de_{i}(\vf_{k}(|u|)\, u_{j}) = \de_{i}(\sqrt{\vf_{k}(|u|)}\,\sqrt{\vf_{k}(|u|)}\, u_{j}) \\
=
\sqrt{\vf_{k}(|u|)}\, u_{j} \, \de_{i}(\sqrt{\vf_{k}(|u|)}\,) + 
 \sqrt{\vf_{k}(|u|)}\,\de_{i}(\sqrt{\vf_{k}(|u|)}\, u_{j})\, .
\end{gather*}

By Cauchy inequality we get 
\begin{gather*}
\int_{\Om} |F_{ij}|\, |\de_{i}(\vf_{k}(|u|)\, u_{j})|\, dx \\
\leq
\left(\int_{\Om} |F|^{2} \vf_{k}(|u|)\, dx \right)^{1/2}\bigg[
 \left(\int_{\Om} |u|^{2}\, |\nabla (\sqrt{\vf_{k}(|u|)}|^{2} dx \right)^{\frac{1}{2}}\\
 + 
 \left(\int_{\Om} |\nabla(\sqrt{\vf_{k}(|u|)}\, u)|^{2} dx \right)^{\frac{1}{2}}\bigg] ,
\end{gather*}
where $|F|=(\sum_{i,j}^{1,N}|F_{ij}|^2)^{1/2}$.
In view of the decrease of \eqref{eq:vfk} we find
$$
|u|^{2}\, |\nabla (\sqrt{\vf_{k}(|u|)}\, |^{2}= |u|^{2}\, (\vf'_{k}(|u|))^{2}/(4\,\vf_{k}(|u|))  |\nabla u|^2
\leq (1-p/2)^2\, \vf_{k}(|u|)  |\nabla u|^2,
$$
from which it follows
$$
|u|^{2}\, |\nabla (\sqrt{\vf_{k}(|u|)}\, |^{2}
\leq (p-2)^2\, |\nabla (\sqrt{\vf_{k}(|u|)}\, u)|^{2}
$$
(see \eqref{eq:lastineq}).  Inequality \eqref{eq:disug1} leads to
\begin{equation}\label{eq:vk}
  \int_{\Om}| \nabla v_{k}|^2 dx \leq  C
 \int_{\Om} |F|^{2} \vf_{k}(|u|)\, dx \, ,
\end{equation}
where $v_{k}= \sqrt{\vf_{k}(|u|)}\, u$. Here and in the sequel the same symbol $C$ denotes different constants which do not depend
on $v_k$.

Setting $\al=Np/((N-2)(p-2))$, by H\"older inequality we have
$$
 \int_{\Om} |F|^{2} \vf_{k}(|u|)\, dx \leq
 \left(\int_{\Om} \vf_k(|u|)^\al dx\right)^{\frac{1}{\al}}
  \left(\int_{\Om} |F|^{2\al'} dx\right)^{\frac{1}{\al'}}
$$
where $\al'=Np/(2(N+p)-4)$.

We claim
\begin{equation}\label{eq:ineqvk}
\vf_{k}(|u|)\leq |v_{k}|^{2(p-2)/p}.
\end{equation}
In fact, we have
$$
 |v_{k}|^{\frac{2(p-2)}{p}}=
 \begin{cases}
|u|^{p-2},  & \text{if $0\leq |u| <k-1$; }  \\ 
(\ro_k(|u|)^{\frac{(p-2)^2}{p}}\, |u|^{\frac{2(p-2)}{p}},  & \text{if $k-1\leq |u| \leq k$; }  \\ 
 (k-1/2)^{\frac{(p-2)^2}{p}}\,  |u|^{\frac{2(p-2)}{p}}  ,  & \text{if $|u| > k$, }
 \end{cases}
$$
and inequality \eqref{eq:ineqvk} for $|u|<k-1$ or $|u| > k$ easily follows from
the identity $(p-2)^2 + 2(p-2)= p (p-2)$. If $k-1\leq |u| \leq k$ we note that
$\ro_{k}(t)\leq t$ in $[k-1,k]$ and then
\begin{gather*}
\vf_{k}(|u|)=[\ro_{k}(|u|)]^{p-2} = [\ro_{k}(|u|)]^{\frac{(p-2)^2}{p}+\frac{2(p-2)}{p} } \\
\leq [\ro_{k}(|u|)]^{\frac{(p-2)^2}{p}} |u|^{\frac{2(p-2)}{p} }=|v_{k}|^{\frac{2(p-2)}{p}}.
\end{gather*}

Therefore
$$
 \int_{\Om} |F|^{2} \vf_{k}(|u|)\, dx \leq
 \left(\int_{\Om} |v_{k}|^{\frac{2N}{N-2}} dx\right)^{\frac{1}{\al}}
  \left(\int_{\Om} |F|^{2\al'} dx\right)^{\frac{1}{\al'}}
$$
and in view of Sobolev imbedding theorem (see, for instance, \cite[Sect. 2.3.5]{mazyasobolev},
where also the the best constant is  determined)
we obtain
$$
\int_{\Om} |F|^{2} \vf_{k}(|u|)\, dx \leq C  \left(\int_{\Om} |\nabla v_{k}|^{2} dx\right)^{\frac{p-2}{p}}
  \left(\int_{\Om} |F|^{2\al'} dx\right)^{\frac{1}{\al'}}.
$$

Inequality \eqref{eq:vk} implies
$$
\int_{\Om} |\nabla v_{k}|^{2} dx \leq C
 \left(\int_{\Om} |F|^{\frac{Np}{N+p-2}} dx\right)^{\frac{N+p-2}{N}}
$$

From \eqref{eq:lastineq} we have also
$$
4\, |\nabla v_{k}|^{2} \geq \vf_k(|u|)\, |\nabla u|^2
$$
and then
$$
\int_{\Om} |\nabla u|^2\,  \vf_k(|u|)\,  dx \leq C
 \left(\int_{\Om} |F|^{\frac{Np}{N+p-2}} dx\right)^{\frac{N+p-2}{N}}.
$$

By Fatou's Lemma, letting $k\to +\infty$, we obtain \eqref{eq:exdis2} and  \eqref{eq:exdis1} follows immediately.
Recalling Remark \ref{rem:lemmino}, we have also
$$
\int_{\Om}|\nabla(|u|^{\frac{p-2}{2}}\, u)|^{2} \,  dx \leq C
 \left(\int_{\Om} |F|^{\frac{Np}{N+p-2}} dx\right)^{\frac{N+p-2}{N}}
$$
 and  then
$u$ belongs to $[L^{\frac{Np}{N-2}}(\Om)]^N$, because of the Sobolev imbedding theorems.
\end{proof}

Let us show that the previous result can  be extended to a certain class of Lam\'e operators with variable Lam\'e coefficients.

\begin{corollary}
    Let $S$ be the Lam\'e operator with variable coefficients of the  form \eqref{eq:A}, where
$$
a^{hk}_{ij}(x)= (\la+\ep(x)) \del_{ih}  \del_{jk} +( \mu+ \si(x))  ( \del_{ij}  \del_{hk}   + \del_{ik}  \del_{hj})\, .
$$
Here $\la$ and $\mu$ are constant, while $\ep(x)$ and $\si(x)$ are $L^{\infty}$ functions.
Suppose $p\geq 2$ is such that condition \eqref{eq:pN} holds and 
$\Vert \,  |\ep|+|\si| \, \Vert_{L^\infty(\Om)}$ is sufficiently small. Then
the solution $u$ of the Dirichlet problem \eqref{eq:Dir}, where $F$ belongs to $[L^{\frac{Np}{N+p-2}}(\Om)]^{N^2}$, satisfies the inequalities \eqref{eq:exdis1}, \eqref{eq:exdis2}. Moreover, the solution $u$ belongs to $[L^{\frac{Np}{N-2}}(\Om)]^N$.
\end{corollary}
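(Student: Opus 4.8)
The plan is to deduce this corollary from Theorem \ref{th:8} by treating $S$ as a small perturbation of the constant coefficient Lam\'e operator \eqref{opelastconst} and by checking that the dissipativity estimate underlying the proof of that theorem survives the perturbation with all constants independent of the truncation parameter used there. I would write $S=\de_{h}((\B^{hk}+P^{hk})\de_{k})$, where $\B^{hk}$ are the constant matrices of the operator $E$ of \eqref{opelastconst}, that is $\B^{hk}_{ij}=\la\,\del_{ih}\del_{jk}+\mu(\del_{ij}\del_{hk}+\del_{ik}\del_{hj})$, while $P^{hk}_{ij}=\ep(x)\,\del_{ih}\del_{jk}+\si(x)(\del_{ij}\del_{hk}+\del_{ik}\del_{hj})$, so that $|\lan P^{hk}\de_{k}u,\de_{h}v\ran|\leq C_{N}(|\ep|+|\si|)\,|\nabla u|\,|\nabla v|$ pointwise, with $C_{N}$ depending only on $N$. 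Since the bilinear form of $E$ is coercive on $[\Hspt^{1}(\Om)]^{N}$ (because $\mu>0$ and $\la+2\mu>0$), for $\|\,|\ep|+|\si|\,\|_{L^{\infty}(\Om)}$ small enough the form of $S$ is coercive too, and by Lax--Milgram the Dirichlet problem \eqref{eq:Dir} for $S$ has a unique energy solution $u\in[\Hspt^{1}(\Om)]^{N}$; here we use that $F\in[L^{2}(\Om)]^{N^{2}}$, which holds because $Np/(N+p-2)\geq 2$ for $p\geq 2$.

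Next I would fix $k>0$, take the truncated function $\vf_{k}$ from the proof of Theorem \ref{th:8} and set $v_{k}=\sqrt{\vf_{k}(|u|)}\,u$. Condition \eqref{eq:pN} coincides with \eqref{eq:obvious} once $\LA_{\infty}^{2}$ is replaced by $(1-2/p)^{2}$, and $\LA_{k}^{2}\leq(1-2/p)^{2}$, so Theorem \ref{th:7} provides a constant $\kappa>0$, \emph{independent of $k$}, such that $\Re\int_{\Om}\lan\B^{hk}\de_{k}u,\de_{h}(\vf_{k}(|u|)u)\ran dx\geq\kappa\int_{\Om}|\nabla v_{k}|^{2}dx$ whenever $\vf_{k}(|u|)u\in[\Hspt^{1}(\Om)]^{N}$. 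For the perturbation term I would combine $|\nabla(\vf_{k}(|u|)u)|\leq\big(1+|t\vf_{k}'(t)/\vf_{k}(t)|\big)\,\vf_{k}(|u|)\,|\nabla u|\leq(p-1)\,\vf_{k}(|u|)\,|\nabla u|$, which follows from \eqref{eq:vfk}, with $\vf_{k}(|u|)\,|\nabla u|^{2}\leq 4\,|\nabla v_{k}|^{2}$ from \eqref{eq:lastineq}, getting
$$
\Big|\,\Re\int_{\Om}\lan P^{hk}\de_{k}u,\de_{h}(\vf_{k}(|u|)u)\ran\,dx\,\Big|\leq 4(p-1)\,C_{N}\,\|\,|\ep|+|\si|\,\|_{L^{\infty}(\Om)}\int_{\Om}|\nabla v_{k}|^{2}dx\,.
$$
Adding this to the dissipativity inequality for $E$, I would conclude that, if $\|\,|\ep|+|\si|\,\|_{L^{\infty}(\Om)}<\kappa/(4(p-1)C_{N})$, then $S$ is strict $L^{\Phi_{k}}$-dissipative with constant $\kappa'=\kappa-4(p-1)C_{N}\|\,|\ep|+|\si|\,\|_{L^{\infty}(\Om)}>0$, and this $\kappa'$ is independent of $k$.

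From this point on the argument would follow verbatim the second half of the proof of Theorem \ref{th:8}: testing the weak formulation of \eqref{eq:Dir} for $S$ with $\vf_{k}(|u|)u\in[\Hspt^{1}(\Om)]^{N}$ and using strict $L^{\Phi_{k}}$-dissipativity gives $\kappa'\int_{\Om}|\nabla v_{k}|^{2}dx\leq\int_{\Om}|F_{ij}|\,|\de_{i}(\vf_{k}(|u|)u_{j})|\,dx$; then Cauchy's inequality, the bound $|u|^{2}|\nabla\sqrt{\vf_{k}(|u|)}|^{2}\leq(p-2)^{2}|\nabla v_{k}|^{2}$, H\"older's inequality, the Sobolev imbedding theorem and \eqref{eq:ineqvk} lead to $\int_{\Om}|\nabla v_{k}|^{2}dx\leq C\big(\int_{\Om}|F|^{Np/(N+p-2)}dx\big)^{(N+p-2)/N}$ with $C$ not depending on $k$. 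Letting $k\to+\infty$ by Fatou's lemma then gives \eqref{eq:exdis2} and hence \eqref{eq:exdis1}; Remark \ref{rem:lemmino} turns this into the analogous bound for $\int_{\Om}|\nabla(|u|^{(p-2)/2}u)|^{2}dx$, and the Sobolev imbedding shows that $u\in[L^{Np/(N-2)}(\Om)]^{N}$.

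The hard part will be the uniformity in $k$ of the perturbation estimate: one must check that the factor in front of $\|\,|\ep|+|\si|\,\|_{L^{\infty}(\Om)}$ does not deteriorate as $k\to+\infty$, so that a single smallness threshold works for all $k$ simultaneously. This is precisely why the truncations $\vf_{k}$ are convenient, since by \eqref{eq:vfk} they satisfy $|t\vf_{k}'(t)/\vf_{k}(t)|\leq p-2$ uniformly in $k$; consequently, by \eqref{eq:lastineq} and Remark \ref{rem:lemmino}, the quantities $|\nabla v_{k}|^{2}$, $\vf_{k}(|u|)|\nabla u|^{2}$ and $|\nabla(\vf_{k}(|u|)u)|^{2}$ are mutually comparable, with constants depending only on $p$. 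The smallness of $\|\,|\ep|+|\si|\,\|_{L^{\infty}(\Om)}$ then has only to be chosen small enough both for this estimate and for the coercivity of $S$ used at the outset, and both requirements are independent of $k$.
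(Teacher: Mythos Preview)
Your proposal is correct and follows the same overall strategy as the paper: show that $S$ is strict $L^{\Phi_k}$-dissipative with a constant independent of $k$ by treating $S$ as a small $L^\infty$ perturbation of the constant-coefficient operator $E$, and then repeat verbatim the second half of the proof of Theorem~\ref{th:8}. The only technical difference is that the paper carries out the perturbation estimate at the level of the transformed inequality \eqref{eq:cond1str} from Lemma~\ref{le:7} (i.e.\ in terms of $v$), whereas you bound $\Re\int_{\Om}\lan P^{hk}\de_k u,\de_h(\vf_k(|u|)u)\ran\,dx$ directly via the pointwise estimate $|t\vf_k'(t)/\vf_k(t)|\leq p-2$; both routes give the same uniform-in-$k$ smallness threshold on $\|\,|\ep|+|\si|\,\|_{L^\infty(\Om)}$.
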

\begin{proof}
First, let us prove that the operator $S$ is strict $L^p$-dissipative. By Lemma \ref{le:7}
the strict $L^p$-dissipativity of $S$ can be written in terms of the vector function
$v=|u|^\frac{p-2}{2}u$ as follows
\begin{equation}\label{eq:dislamevarN}
\begin{gathered}
\int_{\Om}\Big(
   (\mu+\si(x))  |\nabla v|^2 + (\la+\ep(x)) (\dive v)^{2}  
    + (\mu+\si(x)) \sum_{k,j}\de_{k} v_{j} \,  \de_{j} v_{k} \\
    -(1-2/p)^{2}\left[ (\mu+\si(x)) |\nabla|v||^{2} + (\la+\ep(x)+ \mu+\si(x))|v|^{-2} | v_h \de_{h}|v||^2\right] \Big)dx \\
    \geq \kappa \int_{\Om}|\nabla v|^2 dx, \quad \forall\ v\in [\Hspt^1(\Om)]^N,
\end{gathered}
\end{equation}
where $\kappa$ is a positive constant. On the other hand, there exists a constant $C$, which does not depend on
$\ep$, $\si$ and $v$, 
such that
\begin{gather*}
\Big|\int_{\Om}\Big(
   \si(x)  |\nabla v|^2 + \ep(x) (\dive v)^{2}  
    + \si(x) \sum_{k,j}\de_{k} v_{j} \,  \de_{j} v_{k} \\
    -(1-2/p)^{2}\left[ \si(x) |\nabla|v||^{2} + (\ep(x) +\si(x))|v|^{-2} | v_h \de_{h}|v||^2\right] \Big)dx \Big|\\
    \leq C\, \Vert\,  |\ep|+|\si| \, \Vert_{L^\infty(\Om)}
  \int_{\Om}|\nabla v|^2 dx,  \quad \forall\ v\in [\Hspt^1(\Om)]^N.
 \end{gather*}
 Since  $\la$ and $\mu$ satisfy condition \eqref{eq:pN}, we have the strict $L^p$-dissipativity 
 of the Lam\'e operator \eqref{opelastconst} and then
 there exists $\kappa_{0}>0$ such that
 \begin{gather*}
\int_{\Om}\Big(
   \mu  |\nabla v|^2 + \la (\dive v)^{2}  
    + \mu \sum_{k,j}\de_{k} v_{j} \,  \de_{j} v_{k} \\
    -(1-2/p)^{2}\left[ \mu |\nabla|v||^{2} + (\la+\mu)|v|^{-2} | v_h \de_{h}|v||^2\right] \Big)dx \\
    \geq \kappa_{0} \int_{\Om}|\nabla v|^2 dx,  \quad \forall\ v\in [\Hspt^1(\Om)]^N.
\end{gather*}
 If $\ep$ and $\si$ are such that
$$
\Vert \,  |\ep|+|\si| \, \Vert_{L^\infty(\Om)} \leq \frac{\kappa_{0}}{2C}\, ,
$$
inequality \eqref{eq:dislamevarN} holds with
$\kappa=\kappa_{0}/2$.  Since the operator $S$ is strict
$L^p$-dissipative, then the proof of Theorem \ref{th:8} can be repeated and the result follows.
\end{proof}

\subsection{The $2$-dimensional case}

Before giving our result for $N=2$, we recall a couple of facts concerning Orlicz spaces.
For the general theory of these spaces we refer to the monographs \cite{krasno} and \cite{raoren}.

Let $(M,N)$ be a  complementary pair of Young's functions (see, e.g., \cite[p.6]{raoren})
defined for $t\geq 0$. 
The Orlicz space $\elle_M(\Om)$ is defined as the class of measurable functions defined in $\Om$ such that there exists $\al>0$ satisfying
$$
\int_{\Om} M(\al\, |u|)\, dx <+\infty.
$$
In the space $\elle_M(\Om)$ we can introduce two norms:
$$
\Vert u\Vert_{\elle_M(\Om)} = \sup\left\{ \int_{\Om} u\, v\, dx\ \Big| \ \int_{\Om} N(|v|)\, dx \leq 1\right\}
$$
which is called Orlicz norm, and the Luxemburg  norm
$$
\vertiii{u}_{\elle_M(\Om)} = \inf \left\{\ \la>0\ \Big|\ \int_{\Om} M(|u|/\la)\, dx \leq 1 \right\}.
$$
The two norms are equivalent, because of the inequalities
\begin{equation}\label{eq:eqnorms}
\vertiii{u}_{\elle_M(\Om)}  \leq \Vert u\Vert_{\elle_M(\Om)}  \leq 2\,\vertiii{u}_{\elle_M(\Om)}
\end{equation}
for any $u\in \elle_M(\Om)$ (see, e.g.,  \cite[p.61]{raoren}).  We recall also that
 H\"older inequality holds in the following form (see, e.g.,  \cite[p.58]{raoren})
\begin{equation}\label{eq:holder}
 \int_{\Om}| u\, v|\, dx \leq  2\, \vertiii{u}_{\elle_M(\Om)}\, \vertiii{v}_{\elle_N(\Om)}\, .
\end{equation}

\begin{theorem}
    Let $\Om$  be a bounded domain in $\R^2$. Let $E$ be the Lam\'e operator
    \eqref{opelast} with coefficients in $L^\infty(\Om)$. Suppose 
$$
\left(1- \frac{2}{p}\right)^2 <  1 - \esssup_{x\in\Om}\left(\frac{\la + \mu}{\la+3\mu}\right)^2
$$
with $p\geq 2$ and that the BMO norm of the function $\mu^2/(\la+3\mu)$ satisfies inequality \eqref{eq:BMOineq}.
 Consider the Dirichlet problem \eqref{eq:Dir}, where the matrix $F$ is such that
 \begin{equation}\label{eq:HypF}
\int_{\Om} |F|^2 (\log(|F|+e))^{\frac{p-2}{p}} dx < +\infty
\end{equation}
Then the solution $u$ satisfies the inequality \eqref{eq:exdis1}.
In particular,
\begin{equation}\label{eq:exdis2n=2}
\int_{\Om}|\nabla u|^2\, |u|^{p-2} \, dx \leq K 
\vertiii{\, |F|^2\, }_{\elle_{\widetilde{N}}(\Om)}^{\frac{p}{2}}
\end{equation}
where 
$$
\widetilde{N}(t)=t\, (\log(t+e))^{\frac{p-2}{p}}
$$ 
and the constant $K$ does not depend on $u$.
  \end{theorem}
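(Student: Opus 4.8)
The plan is to follow the scheme of the proof of Theorem \ref{th:8}, with the Sobolev imbedding $\Hspt^1\hookrightarrow L^{2N/(N-2)}$ --- which is unavailable when $N=2$ --- replaced by a Trudinger--Moser imbedding of $\Hspt^1(\Om)$ into an exponential Orlicz class, the exponents being tuned so that the dual class is exactly the logarithmic class in which $|F|^2$ lies by hypothesis \eqref{eq:HypF}.

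I would begin with two preliminary observations. Since $\widetilde N(t)/t=(\log(t+e))^{(p-2)/p}\to+\infty$ and $\Om$ is bounded, \eqref{eq:HypF} means precisely that $|F|^2\in\elle_{\widetilde N}(\Om)$, and in particular $F\in[L^2(\Om)]^4$, so the energy solution $u\in\Hspt^1(\Om)$ of \eqref{eq:weak} exists and is unique. Next, with the truncations $\vf_k$ introduced as in Theorem \ref{th:8}, the identity \eqref{eq:vfk} shows that $t\vf_k'(t)/\vf_k(t)$ decreases from $p-2$ to $0$, so $\sup_{t>0}\LA_k^2(t)\le(1-2/p)^2$; combined with the smallness of the $BMO$ seminorm expressed by \eqref{eq:BMOineq}, Theorem \ref{th:5} and Remark \ref{rem:2} yield that $E$ is strict $L^{\Phi_k}$-dissipative with a constant $\kappa>0$ that can be chosen independent of $k$. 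Testing \eqref{eq:weak} with $v=\vf_k(|u|)u\in[\Hspt^1(\Om)]^2$, expanding $\de_i(\vf_k(|u|)u_j)$, applying the Cauchy--Schwarz inequality and using $|u|^2|\nabla\sqrt{\vf_k(|u|)}|^2\le(p-2)^2|\nabla v_k|^2$ (a consequence of \eqref{eq:lastineq} and the monotonicity of \eqref{eq:vfk}) exactly as in Theorem \ref{th:8}, I obtain \eqref{eq:vk}, i.e. $\Vert\nabla v_k\Vert^2\le C\int_\Om|F|^2\vf_k(|u|)\,dx$ with $v_k=\sqrt{\vf_k(|u|)}\,u$ and $C$ independent of $k$, while \eqref{eq:ineqvk} keeps giving $\vf_k(|u|)\le|v_k|^{2(p-2)/p}$.

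The core of the argument is the estimate of $\int_\Om|F|^2|v_k|^{2(p-2)/p}\,dx$. For a bounded planar domain the Trudinger--Moser inequality provides a Young function $M_0$, equivalent at infinity to $e^{t^2}-1$, with $\vertiii{\,|v|^2\,}_{\elle_{\widetilde M_0}(\Om)}\le C\Vert\nabla v\Vert^2$ for all $v\in\Hspt^1(\Om)$, where $\widetilde M_0(t)=M_0(\sqrt t)$ is equivalent to $e^{t}-1$. Writing $|v_k|^{2(p-2)/p}=(|v_k|^2)^{(p-2)/p}$ and using the elementary scaling relation $\vertiii{g^{(p-2)/p}}_{\elle_{M_*}(\Om)}\le\vertiii{g}_{\elle_{\widetilde M_0}(\Om)}^{(p-2)/p}$ with $M_*(t)=\widetilde M_0(t^{p/(p-2)})$ (equivalent to $e^{t^{p/(p-2)}}-1$), one gets $\vertiii{\,|v_k|^{2(p-2)/p}\,}_{\elle_{M_*}(\Om)}\le C\Vert\nabla v_k\Vert^{2(p-2)/p}$. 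A direct computation of the complementary function shows that the conjugate of $M_*$ is equivalent to $\widetilde N(t)=t(\log(t+e))^{(p-2)/p}$, so the H\"older inequality \eqref{eq:holder} gives $\int_\Om|F|^2|v_k|^{2(p-2)/p}\,dx\le C\,\vertiii{\,|F|^2\,}_{\elle_{\widetilde N}(\Om)}\Vert\nabla v_k\Vert^{2(p-2)/p}$. Combining this with \eqref{eq:vk} and $\vf_k(|u|)\le|v_k|^{2(p-2)/p}$ produces $\Vert\nabla v_k\Vert^2\le C\,\vertiii{|F|^2}_{\elle_{\widetilde N}(\Om)}\Vert\nabla v_k\Vert^{2(p-2)/p}$, whence $\Vert\nabla v_k\Vert^2\le K\,\vertiii{|F|^2}_{\elle_{\widetilde N}(\Om)}^{p/2}$. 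Since $4|\nabla v_k|^2\ge\vf_k(|u|)|\nabla u|^2$ by \eqref{eq:lastineq}, letting $k\to+\infty$ and invoking Fatou's lemma yields \eqref{eq:exdis2n=2}, and \eqref{eq:exdis1} then follows because $u\in\Hspt^1(\Om)$.

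The step I expect to be the main obstacle is the Orlicz-space bookkeeping: pinning down the correct Trudinger--Moser imbedding in Luxemburg-norm form, justifying the power-scaling relation for Luxemburg norms, and checking that the complementary function of $e^{t^{p/(p-2)}}-1$ is equivalent to $\widetilde N$ --- which is what makes hypothesis \eqref{eq:HypF} precisely the assumption needed for the duality \eqref{eq:holder} to close the estimate. Once these equivalences of Young functions are established, the iteration and the passage to the limit are identical to those in the proof of Theorem \ref{th:8}.
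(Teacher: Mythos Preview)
Your proposal is correct and follows essentially the same route as the paper: truncations $\vf_k$ to reduce to \eqref{eq:vk}, Trudinger--Moser imbedding of $\Hspt^1(\Om)$ into the exponential Orlicz class $\elle_{M_0}$ with $M_0(t)=e^{4\pi t}-1$, the power-scaling relation $\vertiii{g^{(p-2)/p}}_{\elle_{M_*}}\le\vertiii{g}_{\elle_{M_0}}^{(p-2)/p}$, identification of the complementary function of $M_*$ with $\widetilde N$ up to equivalence, H\"older, and Fatou. The only notable difference is that the paper does not quote the Orlicz-norm form of Trudinger--Moser as a black box but derives $\Vert\,|v|^2\,\Vert_{\elle_{M_0}(\Om)}\le C\,\Vert\nabla v\Vert^2$ from Maz'ya's capacitary criterion together with the planar isocapacitary inequality $\text{cap}(F,\Om)\ge 4\pi/\log(m(\Om)/m(F))$; everything else in your outline matches the paper's proof.
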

\begin{proof}
If $p=2$ the result is well known,  since in this case \eqref{eq:HypF} means $F\in [L^2(\Om)]^2$.
Suppose then $p>2$.  The assumptions on $p$ and Lam\'e coefficients assure that
$E$ is stric $L^p$-dissipative, because of Theorem \ref{th:5}.  As in Theorem \ref{th:8} we find  \eqref{eq:vk}, keeping in mind also Remark
\ref{rem:2}, .

The function $v_k$ being in $\Hspt^1(\Om)$, Yudovich-Trudinger-Moser inequality \cite{yudo,moser, trudinger} holds:
$$
\int_{\Om}e^{4\pi |v_k|^2}dx < +\infty \, .
$$
Setting $w_{k}=\vf_{k}(|u|)$ we have also
$$
\int_{\Om}e^{4\pi w_{k}^{p/(p-2)}}dx < +\infty \, ,
$$
because $w_{k}\leq |v_{k}|^{2(p-2)/p}$ (see \eqref{eq:ineqvk}). Set $M(t)=e^{4\pi t^{p/(p-2)}} -1$. Let $N(t)$ be  its complementary Young's function. We have
\begin{equation}\label{eq:Nt}
N(t)=t\, \left(\frac{\log(t+e)}{4\pi}\right)^{\frac{p-2}{p}}(1 + o(1))
\end{equation}
(as $t\to + \infty$). 
By the H\"older inequality \eqref{eq:holder}
\begin{equation}\label{eq:holderN}
\int_{\Om} |F|^{2}w_{k}\, dx \leq  2\, \vertiii{\, w_k\, }_{\elle_M(\Om)}\, \vertiii{\, |F|^{2}\, }_{\elle_N(\Om)}\, .
\end{equation}
Let us introduce now another Orlicz space $\elle_{M_{0}}(\Om)$, where
$M_{0}(t)=e^{4\pi t} -1$. We prove that
\begin{equation}\label{eq:ineqorlicz}
\vertiii{\, w_k\, }_{\elle_M(\Om)} \leq \vertiii{\, |v_k|^{2}\, }_{\elle_{M_{0}}(\Om)}^{\frac{p-2}{p}}\, .
\end{equation}
Take $\mu>0$ such that
\begin{equation}\label{eq:infproof}
\int_{\Om}(e^{4\pi \frac{|v_k|^2}{\mu}}-1)\, dx \leq 1.
\end{equation}
We have also
$$
\int_{\Om}(e^{4\pi \left(\frac{w_{k}}{\la}\right)^{\frac{p}{p-2}}}-1)\, dx \leq 1
$$
where $\la=\mu^{(p-2)/p}$. By definition of Luxemburg norm
$$
\vertiii{\, w_k\, }_{\elle_M(\Om)}\leq \la\, ,
$$
i.e.
$$
\vertiii{\, w_k\, }_{\elle_M(\Om)}^{\frac{p}{p-2}} \leq \mu.
$$
This being true for any $\mu$ satisfying \eqref{eq:infproof},
 we obtain inequality \eqref{eq:ineqorlicz} taking the infimum on the right hand side.

We claim now that
\begin{equation}\label{eq:mazyaimb}
\Vert\, |v|^{2}\, \Vert_{\elle_{M_{0}}(\Om)} \leq C \int_{\Om}|\nabla v|^2 dx \, .
\end{equation}

This inequality is a particular case of a general result proved by Maz'ya (see \cite[p.158]{mazyasobolev}). In view of this theorem, we can say that \eqref{eq:mazyaimb} is true
for any $v\in \Cspt^{\infty}(\Om)$ if and only if
 there exists a constant $\beta$ such that
\begin{equation}\label{eq:beta}
m(F)\, N_{0}^{-1}(1/m(F)) \leq \beta\, \text{cap}(F,\Om)
\end{equation}
for any compact set $F\subset \Om$. Here $m(F)$ denotes the Lebesgue measure of $F$  and  $\text{cap}(F,\Om)$ is the capacity 
of $F$ relative to $\Om$, i.e.
$$
\text{cap}(F,\Om) = \inf \left\{ \int_{\Om} |\nabla u|^2 dx\ \Big|\ u\in \Cspt^{\infty}(\Om),\, u\geq 1 \text{ on } F \right\}.
$$

We can write
$$
N_{0}^{-1}(t) = 4\pi t /\log(m(\Om)t+e)(1+o(1))
$$
and then
\begin{gather*}
m(F)\, N_{0}^{-1}(1/m(F)) = 4\pi /\log([m(\Om)/m(F)]+e)(1+o(1)) \\
\leq 4\pi /\log([m(\Om)/m(F)])(1+o(1)).
\end{gather*}

On the other hand, the following isocapacitary inequality holds (see \cite[p.148]{mazyasobolev})
$$
\text{cap}(F,\Om) \geq 4\pi  /\log([m(\Om)/m(F)])
$$
and \eqref{eq:beta} is proved. Thanks to Maz'ya's result, estimate \eqref{eq:mazyaimb} is valid
for any $v\in \Cspt^{\infty}(\Om)$ and then, by density, for any $v\in \Hspt^{1}(\Om)$.

From \eqref{eq:vk}, \eqref{eq:holderN}, \eqref{eq:ineqorlicz}, \eqref{eq:eqnorms}  and 
\eqref{eq:mazyaimb} it follows
$$
\int_{\Om} |\nabla u|^2\,  \vf_k(|u|)\,  dx \leq C
\vertiii{\, |F|^2\, }_{\elle_{{N}}(\Om)}^{\frac{p}{2}}
$$
Letting $k\to +\infty$ we obtain 
$$
\int_{\Om} |\nabla u|^2\,  |u|^{p-2}  dx \leq C
\vertiii{\, |F|^2\, }_{\elle_{{N}}(\Om)}^{\frac{p}{2}}
$$
and \eqref{eq:exdis2n=2} follows immediately from \eqref{eq:Nt}.
\end{proof}

 \section*{Funding}
 The second author has been supported by the RUDN University Strategic
Academic Leadership Program.

\end{document}